\documentclass[12pt, reqno]{amsart}
\usepackage{amscd,amsmath,amsthm,amssymb,graphics}
\usepackage{amsfonts,amssymb,amscd,amsmath,enumitem,verbatim}
\usepackage[a4paper,top=3cm,left=3cm,right=3cm]{geometry}
\usepackage{subcaption}
\usepackage{xcolor}
\usepackage{float}
\usepackage{graphicx}
\theoremstyle{plain}
\usepackage[linesnumbered, ruled]{algorithm2e}
\usepackage{booktabs}
\usepackage{comment}
\newtheorem{Theorem}{Theorem}
\newtheorem{Lemma}[Theorem]{Lemma}
\newtheorem{Corollary}[Theorem]{Corollary}
\newtheorem{Proposition}[Theorem]{Proposition}

\newtheorem{Definition}[Theorem]{Definition}
\newtheorem{Remark}[Theorem]{Remark}

\newtheorem{Example}[Theorem]{Example}

\newcommand{\Z}{\mathbb{Z}}
\newcommand{\N}{\mathbb{N}}
\newcommand{\Q}{\mathbb{Q}}
\newcommand{\R}{\mathbb{R}}

\newcommand{\F}{\mathbb{F}}

\usepackage[backend=biber,maxbibnames=99]{biblatex}

\addbibresource{Bibl.bib}

\title{The arithmetic of continued fractions in the field of $p$-adic numbers}

\keywords{continued fractions, $p$-adic numbers,  Gosper's algorithm,  M\"obius transformation, bilinear fractional transformation}
\subjclass[2010]{11J70; 11D88; 11Y65; 12J25}

\author[G. Romeo]{Giuliano Romeo}
\address{Department of Mathematical Sciences ``Giuseppe Luigi Lagrange", Politecnico di Torino; Charles University, Faculty of Mathematics and Physics, Department of Algebra, Sokolovská 83, 186 00 Praha 8, Czech Republic}
\email{giuliano.romeo@polito.it\\
romeo.giuliano@matfyz.cuni.cz}
\author[G. Salvatori]{Giulia Salvatori}
\address{Department of Mathematical Sciences ``Giuseppe Luigi Lagrange", Politecnico di Torino}
\email{giulia.salvatori@polito.it}

\usepackage{hyperref}
\begin{document}

\begin{abstract}

Continued fractions have been long studied due to their strong properties, such as rational approximation. In this extent, their arithmetic over real numbers has represented an intriguing problem throughout the years. In this paper, we develop the arithmetic of continued fractions over the field of $p$-adic numbers. In particular, we provide a complete methodology to compute the $p$-adic continued fraction of the M\"obius transformation and the bilinear fractional transformation of $p$-adic numbers. These allow any standard arithmetic operation over $p$-adic numbers to be performed. In great contrast with real continued fractions, we prove that the knowledge of arbitrarily many partial quotients of the initial continued fractions is not always sufficient to recover some partial quotients of the transformations. However, we prove that the set of elements for which this is not possible has Haar measure zero in $\Q_p$.
\end{abstract}

\maketitle
\tableofcontents

\section{Introduction}
Continued fractions over real numbers have been studied for centuries and employed in different areas of mathematics, mainly due to their excellent approximation properties. They are expressions of the form
\begin{equation}\label{Eq: continued fraction}
a_0+\cfrac{1}{a_1+\cfrac{1}{a_2+\ddots}} = [a_0,a_1, a_2, \ldots],
\end{equation}
where the coefficients $a_n$ are called \textit{partial quotients}. Any real number $\alpha\in\R$ can be represented as a continued fraction where the partial quotients are computed, for all $n\geq0$, starting from $\alpha_0=\alpha$, as:

\begin{equation}\label{Alg: R}
\begin{cases}
a_n=\lfloor \alpha_n \rfloor \\
\alpha_{n+1}=\frac{1}{\alpha_n-a_n},
\end{cases}
\end{equation}
where $\lfloor \cdot \rfloor $ denotes the \textit{floor function} and the elements $\alpha_n=[a_n,a_{n+1},\ldots]$ are called \textit{complete quotients}. If $\alpha_n=a_n$ for some $n\in\mathbb{N}$, then the algorithm terminates and the continued fraction is finite. For more details about the properties and the general theory of continued fractions we refer the reader to \cite{khinchin1964continued,olds1963continued,wall2018analytic}.
Continued fractions provide another way to represent real numbers that is, for some aspects, more efficient than the usual decimal representation. In fact, in great contrast with the decimal or the base-$b$ expansion, a real number is rational if and only if its continued fraction is finite and it is a quadratic irrational if and only if its continued fraction is eventually periodic (the famous Lagrange's theorem). However, the reason why we commonly use base-$b$ expansions is that they lead to a simple arithmetic that allows to perform efficiently operations among real numbers. Developing an arithmetic using the partial quotients of continued fraction representations of real numbers is more complicated and it represents an intriguing problem. For example, it is not possible to determine the first partial quotient of $2\alpha$, that is $\lfloor2\alpha\rfloor$, knowing only the first partial quotient of $\alpha$, that is $\lfloor\alpha\rfloor$. The continued fraction of $2\alpha$ has been studied by Hurwitz \cite{hurwitz1963kettenbruch}. Then, Hall \cite{hall1947sum} studied in more generality operations among continued fractions. Lately, further improvements have been carried out by Cusick \cite{cusick1971sums} and Raney \cite{raney1973continued}. In 1972, Gosper \cite{gosper} came out with a very efficient algorithm in order to compute the continued fraction expansion of any linear fractional transformation (M\"obius transformation) of a real number $\alpha$, that is
\begin{equation*}
\gamma =\frac{x\alpha+y}{z\alpha+t},
\end{equation*}
with $x,y,z,t\in\Q$  and the bilinear fractional transformation of two real numbers, that is
\begin{equation*}
\gamma=\frac{x\alpha\beta+y\alpha+z\beta+t}{e\alpha\beta+f\alpha+g\beta+h},
\end{equation*}
with $x,y,z,t,e,f,g,h\in\Q$. These transformations include, as special cases, all arithmetic operations on one or two continued fractions.
Gosper's algorithms for computing the continued fractions of M\"obius and bilinear fractional transformations are described in Sections~\ref{Sec: Mobius} and~\ref{Sec: bilinear}, respectively. This analysis has been lately extended by Liardet and Stambul \cite{liardet1998algebraic}. Moreover, Lagarias and Shallit \cite{lagarias1997linear}  proved some bounds on the continued fraction of the M\"obius transformation, provided that the starting real number has bounded partial quotients (see also the work of Stambul \cite{stambul2000continued}). Some results about the continued fraction expansion of linear fractional transformations can be found in \cite{havens2020linear}. Moreover, these transformations naturally appear in the study of the modular group, whose action on the upper-half plane and its geodesics are connected to continued fractions \cite{series1985modular}. Gosper's algorithm has been recently generalized for the computation of M\"obius and bilinear fractional transformations of multidimensional continued fractions \cite{miska2025arithmetic}.
The aim of this paper is to address similar questions for continued fractions defined over the field of $p$-adic numbers $\Q_p$. Continued fractions over $\Q_p$ have been introduced by Mahler \cite{mahler1940geometrical} in 1940. Unlike real continued fractions, there is no standard algorithm for computing $p$-adic continued fractions. In fact, there is not a unique satisfactory way to choose a $p$-adic floor function to replicate Algorithm~\eqref{Alg: R} over $p$-adic numbers. There are a few natural definitions of $p$-adic continued fractions. The earliest and most commonly used are the algorithms of Ruban \cite{rub} and Browkin \cite{browkin1978continued}, introduced around 1970, and forming the main topic of this work. Another algorithm has been defined by Schneider \cite{schneider1968p} for continued fractions that are not simple, i.e. the numerators in \eqref{Eq: continued fraction} are not necessarily $1$. The problem of finding an algorithm to produce $p$-adic continued fractions with the same good properties of continued fractions in $\R$ is still open. In fact, Ruban's and Schneider's continued fractions are not always finite for rational numbers \cite{laohakosol1985characterization,bundschuh1977p} and they are not always eventually periodic for $p$-adic quadratic irrationals \cite{capuano2019effective,tilborghs1990periodic}.
Browkin’s algorithms \cite{browkin1978continued,browkin2001continued} yield finite continued fractions for every rational number \cite{browkin1978continued,barbero2021periodic}, as in the real case. The problem of deciding whether any $p$-adic quadratic irrational has a periodic Browkin's continued fraction is still open. In fact, despite several progress has been made \cite{bedocchi1988nota,bedocchi1989remarks,capuano2023periodicity,  capuano2019effective,murru2023periodicity,romeo2024real}, Lagrange's theorem has not been proved nor disproved for Browkin's continued fractions. Recently, other $p$-adic continued fractions algorithms have been defined in order to gain better properties of periodicity \cite{barbero2024periodic,murru2024new,murru2023convergence,wang2024convergence,yasutomi2025simultaneous}. For a survey on the general theory of $p$-adic continued fractions see \cite{romeo2024continued}.\bigskip

In this paper we develop the study of the arithmetic of continued fractions in the field of $p$-adic numbers. We provide effective methods to compute the M\"obius transformation and the bilinear fractional transformation of $p$-adic continued fractions. Both the results and the techniques are different from those in the real setting, requiring new arguments. In Section~\ref{Sec: preliminaries}, we recall some known facts about continued fractions in $\mathbb{R}$ and $\mathbb{Q}_p$, as well as the classical algorithms for computing the continued fraction of M\"obius and bilinear fractional transformations. In Section~\ref{Sec: measure}, we recall some known metric results and prove additional ones for the main algorithms for $p$-adic continued fractions. 
The main result is that, for the three $p$-adic continued fraction expansions considered in this work, the partial quotients of $\mu$-\textit{almost all} $p$-adic numbers (where $\mu$ is the Haar measure) have unbounded $p$-adic valuations, with their exact distribution fully determined.
Then, in Section~\ref{Sec: aux}, we prove several lemmas on the arithmetic of $p$-adic numbers, in order to understand how the information of the known $p$-adic digits propagates after simple arithmetic operations. The results of Sections~\ref{Sec: measure} and~\ref{Sec: aux} are useful in the proofs of the main results, but they can also be read independently from the rest of the paper. In Section~\ref{Sec: Mobpadic}, we collect all the results concerning the M\"obius transformation of $p$-adic continued fractions for the case of Ruban and \textit{Browkin I} algorithms. Theorem~\ref{thmfloor} provides necessary and sufficient conditions to determine the $p$-adic floor function of the M\"obius transformation of $\alpha \in \Q_p$ from the coefficients of the transformation and the first partial quotient of $\alpha$. If the hypotheses of Theorem~\ref{thmfloor} are not satisfied, the most natural strategy is to use more partial quotients of $\alpha$ by performing \textit{input transformations} (see Section~\ref{Sec: MobInp} for more details). We show that the optimal situation in this case occurs when we have 
\begin{equation}\label{Eq: verygoodcondition}
v_p(x\alpha)<v_p(y)\quad \text{and} \quad  v_p(z\alpha)<v_p(t).
\end{equation}
In fact, whenever \eqref{Eq: verygoodcondition} is satisfied, we are able to compute the partial quotient of the M\"obius transformation if and only if $v_p(a_n)\leq k$, where $k=v_p(x)-v_p(z)$ is a constant quantity and $\{a_n\}_{n\in\N}$ is the sequence of partial quotients of $\alpha$. One of the most interesting outcome of our analysis is that the fulfillment of the output condition for the computation of the floor function only depends on a single partial quotient. This differs significantly from the real case, where all the partial quotients contribute to give a better approximation of the M\"obius transformation and help to satisfy the condition. In Lemma~\ref{Lem: rec} we prove that after an input transformation, then \eqref{Eq: verygoodcondition} is satisfied, unless either
\begin{equation}\label{Eq: verybadcondition}
v_p(x\alpha+y)\geq v_p(x)+1 \quad \text{or} \quad  v_p(z\alpha+t)\geq v_p(z)+1.
\end{equation}
It turns out, by Corollary~\ref{Cor: valunique} and Proposition~\ref{Prop: tolet}, that \eqref{Eq: verybadcondition} cannot be satisfied at all steps, unless $\alpha$ is the root of either the numerator or the denominator of the transformation. It means that, apart from these cases, after using finitely many partial quotients of $\alpha$, condition~\eqref{Eq: verygoodcondition} is guaranteed to be satisfied. In addition, by performing output transformations we do not end up in this undesired situation, as shown in Proposition~\ref{Pro: outgood}. The striking difference with classical continued fractions is that the output condition is not guaranteed to be always satisfied after performing an arbitrary number of input transformations. A construction for such case is provided in Example~\ref{Exa: neveroutput}. This means that the $p$-adic floor function of a transformation of $\alpha\in\Q_p$ cannot be always recovered by the knowledge of an arbitrary number of partial quotients of $\alpha$. Therefore, in great contrast with real continued fractions, it is not possible to develop a complete arithmetic on $p$-adic numbers by means of their continued fraction expansions. However, by the metric results of Section~\ref{Sec: measure}, we know that the set of such $\alpha\in\Q_p$ has Haar measure zero. In Section~\ref{Sec: bilinearfractional}, we study the $p$-adic continued fraction of the bilinear fractional transformation of two $p$-adic numbers $\alpha,\beta\in\Q_p$, for the case of Ruban and \textit{Browkin I} algorithms. As in Section \ref{Sec: Mobpadic}, we prove necessary and sufficient conditions to determine its $p$-adic floor function. In particular, this characterization is provided in Theorem~\ref{thmfloorBilinear}. If the following inequalities hold:
\begin{align*}
&v_p(x\beta)<v_p(y),  & 
&v_p(z\beta)<v_p(t),   &
&v_p(x\alpha)<v_p(z ),\\
&v_p(e \beta)<v_p(f ),   &
&v_p(g \beta)<v_p(h ),   &
&v_p(e \alpha)<v_p(g ),
\end{align*}
then we show in Remark~\ref{rem: outputcond} that the output condition of Theorem~\ref{thmfloorBilinear} heavily simplifies, becoming
\begin{equation}\label{Eq: verycondab}
\min\{-v_p(a_n),-v_p(b_n)\}\geq u,
\end{equation}
for some $n\in\N$, where $u=v_p(e)-v_p(x)$ is a constant quantity and $\{a_n\}_{n\in\N}$ and $\{b_n\}_{n\in\N}$ are the sequences of partial quotients of $\alpha$ and $\beta$, respectively. Therefore, the fulfillment of the hypotheses of Theorem~\ref{thmfloorBilinear} again depends on whether the partial quotients of $\alpha$ and $\beta$ have sufficiently negative valuation, and this is true for \textit{almost all} $p$-adic numbers. In Section~\ref{Sec: themobalgo} and Section~\ref{Sec: thebilalgorithm}, we discuss the implementations of our procedures for the computation of the $p$-adic continued fraction of the M\"obius transformation and the bilinear fractional transformation. These are, respectively, Algorithm~\ref{Alg: Gosp1} and Algorithm~\ref{Alg: Gosp2} in the Appendix.  The SageMath implementation of the two algorithms is publicly available\footnote{\href{https://github.com/giulianoromeont/p-adic-continued-fractions}{https://github.com/giulianoromeont/p-adic-continued-fractions}}.
In Section~\ref{Sec: otheralgo}, we take in consideration the analysis of the previous two sections for the case of the algorithm for $p$-adic continued fraction expansion developed in \cite{murru2024new}. Finally, in Section~\ref{Sec: Computations}, we present computational experiments and an analysis of the two algorithms’ performance, focusing on the number of input partial quotients needed to satisfy the output conditions.

\section{Preliminaries}\label{Sec: preliminaries}
In this section, we recall some basic facts and notation regarding continued fractions, the Möbius transformation, and the bilinear fractional transformation. For further background, we refer the reader to \cite{khinchin1964continued, olds1963continued, wall2018analytic} for continued fractions and to \cite{gouvea2020p, koblitz2012p} for $p$-adic numbers.

Let us denote by $p$ a prime number and by $v_p(\cdot)$ the $p$-adic valuation. Let $|\cdot|$ and $|\cdot|_p$ be, respectively, the standard Euclidean absolute value and the $p$-adic absolute value. Let us denote a simple continued fraction as in \eqref{Eq: continued fraction}
by $[a_0,a_1,a_2,\ldots]$. For all $n\in\mathbb{N}$, the rational numbers
\[\frac{A_n}{B_n}=[a_0,a_1,\ldots,a_{n-1},a_n]=
a_0 + \cfrac{1}{\displaystyle a_1 + \cfrac{1}{\displaystyle \ddots  a_{n-1}+\cfrac{1}{a_n}}},\]
are called the \textit{convergents} of the continued fraction. The sequences $\{A_n\}_{n\in\mathbb{N}}$ and $\{B_n\}_{n\in\mathbb{N}}$ of numerators and denominators of the convergents satisfy the following recursions:
\begin{equation*}
\begin{cases}
A_0=a_0,\\
A_1=a_1a_0+1,\\
A_n=a_nA_{n-1}+A_{n-2}, \ \ n \geq 2,
\end{cases} \quad 
\begin{cases}
B_0=1,\\
B_1=a_1,\\
B_n=a_nB_{n-1}+B_{n-2}, \ \ n \geq 2.
\end{cases}    
\end{equation*}

\subsection{M\"obius transformation}\label{Sec: Mobius}
Let us recall the idea of Gosper's algorithm for the computation of the partial quotients of the M\"obius transformation of a continued fraction in $\R$. For further details on the algorithm, we refer the reader to \cite[Appendix 2]{gosper}. Let us consider $x,y,z,t \in \Q$ and let $\alpha=[a_0,a_1,\ldots]$, where the partial quotients are integers such that $a_i\geq 1$ for all $i\geq1$. The M\"obius transformation, or linear fractional transformation, of $\alpha$ is the function
\begin{equation}\label{Eq: Mobius}
\gamma =\frac{x\alpha+y}{z\alpha+t},
\end{equation}
such that $xt-yz\neq0$. Gosper's algorithm takes as input $x,y,z,t$ and the sequence of partial quotients $\{a_n\}_{n\geq0}$, and computes the sequence of partial quotients $\{l_n\}_{n\geq0}$ of the continued fraction of \eqref{Eq: Mobius}. The idea in the field of real numbers is that we are able to determine the floor function $\left\lfloor\frac{x\alpha+y}{z\alpha+t}\right\rfloor$ if and only if
\begin{equation}\label{Eq: condition_Gosper}
\left\lfloor\frac{x}{z} \right\rfloor=\left\lfloor\frac{x+y}{z+t} \right\rfloor \quad \text{and} \quad  sign(z)=sign(z+t).
\end{equation}
We can assume $\alpha\geq1$, because, if $\alpha<1$, we replace $\alpha$ with $a_0 + \frac{1}{\alpha_1}$. 
In this case, if $sign(z)=sign(z+t)$, the function $z \alpha + t$ has no zeros on the interval $[1, + \infty)$. Hence, $f(a) = \frac{xa + y}{za + t}$ is well-defined, continuous and monotone on $[1,+\infty)$.
This implies that either
\[\frac{x}{z}\leq\frac{x\alpha+y}{z\alpha+t} \leq \frac{x+y}{z+t},\]
or
\[\frac{x+y}{z+t}\leq\frac{x\alpha+y}{z\alpha+t} \leq \frac{x}{z}. \]
If condition~\eqref{Eq: condition_Gosper} is satisfied, then necessarily
\[l_0=\left\lfloor\frac{x\alpha+y}{z\alpha+t} \right\rfloor=\left\lfloor\frac{x}{z} \right\rfloor=\left\lfloor\frac{x+y}{z+t} \right\rfloor \quad \forall \ \alpha \ge 1.\]
Therefore, we can perform the \textit{output transformation} and compute the next complete quotient of the M\"obius transformation, that is
\begin{equation*}
\gamma_1=\frac{1}{\gamma-l_0}=\frac{1}{\frac{x\alpha+y}{z\alpha+t}-l_0}=\frac{z\alpha+t}{(x-l_0z)\alpha+(y-l_0t)}.
\end{equation*}
If condition~\eqref{Eq: condition_Gosper} is not satisfied, we are not able to compute the partial quotient of the M\"obius transformation. In this case, we perform the \textit{input transformation}, by using the first partial quotient of the continued fraction of $\alpha$. We can write 
\begin{equation}\label{Eq: inputtransformationsec2}
\cfrac{x\alpha+y}{z\alpha+t} = \cfrac{x\left(a_0+\frac{1}{\alpha_1}\right)+y}{z\left(a_0+\frac{1}{\alpha_1}\right)+t}=\cfrac{(xa_0+y)\alpha_1 + x}{(za_0+t)\alpha_1 + t}. 
\end{equation}
Both the input and output transformations can be more concisely expressed in matrix form. In fact, the matrix
\[M=\begin{pmatrix}
    x & y \\
    z & t
\end{pmatrix}\]
is transformed into
\[\begin{pmatrix}
    x & y \\
    z & t
\end{pmatrix}\begin{pmatrix}
    a_0 & 1 \\
    1 & 0
\end{pmatrix}=\begin{pmatrix}
    xa_0+y & x \\
    za_0+t & z
\end{pmatrix},\]
by the input transformation, and into
\begin{equation}\label{Eq: outputreal}
\begin{pmatrix}
    0 & 1 \\
    1 & -l_0
\end{pmatrix}\begin{pmatrix}
    x & y \\
    z & t
\end{pmatrix}=\begin{pmatrix}
    z & t \\
    x-l_0z& y-l_0t
\end{pmatrix},
\end{equation}
by the output transformation. It can be showed that, after a finite number of input transformations, condition~\eqref{Eq: condition_Gosper} is satisfied, and the partial quotient $l_0=\lfloor\gamma\rfloor$ can be computed correctly.

\subsection{Bilinear fractional transformation}\label{Sec: bilinear}In this section, we deal with the bilinear fractional transformation of two continued fractions (for more details, see \cite[Appendix 2]{gosper}). Consider $x, y, z, t, e, f, g, h \in \mathbb{Q}$, and let $\alpha, \beta \in \mathbb{R}$ be real numbers with continued fraction expansions $\alpha = [a_0, a_1, \ldots]$ and $\beta = [b_0, b_1, \ldots]$, where the partial quotients $a_i$ and $b_i$ are integers satisfying $a_i, b_i \geq 1$ for all $i \geq 1$. The bilinear fractional transformation of $\alpha$ and $\beta$ is the function
\begin{equation}\label{Eq: bilinear}
\gamma=\frac{x\alpha\beta+y\alpha+z\beta+t}{e\alpha\beta+f\alpha+g\beta+h},
\end{equation}
such that the matrix
$\begin{pmatrix}
    x & y & z & t \\
    e & f & g & h
\end{pmatrix}$
has full rank $2$. As in the case of M\"obius transformation, if some conditions are satisfied, it is possible to determine the floor function $\lfloor \gamma\rfloor$. In particular, $\lfloor \gamma\rfloor$ is uniquely determined whenever
\begin{equation}\label{Eq: condition_bilinear}
\left\lfloor \frac{x}{e}\right\rfloor = \left\lfloor \frac{x+y}{e+f}\right\rfloor = \left\lfloor \frac{x+z}{e+g}\right\rfloor = \left\lfloor \frac{x+y+z+t}{e+f+g+h}\right\rfloor = l_0,
\end{equation}
and $e$, $e+f$, $e+g$, $e+f+g+h$ have all the same sign. In this case, the floor function of the transformation is uniquely determined: $\lfloor \gamma \rfloor = l_0$, which is the first partial quotient of the continued fraction expansion of $\gamma$. As for the M\"obius transformation, we can perform the output transformation to compute the following complete quotient, that is
\begin{equation}
\gamma_1=\frac{1}{\gamma-l_0}=\frac{e\alpha\beta+f\alpha+g\beta+h}{(x-l_0e)\alpha\beta+(y-l_0f)\alpha+(z-l_0g)\beta+(t-l_0h)},
\end{equation}
and it can be written in matrix form as 

\[\begin{pmatrix}
    0 & 1 & \\
    1 & -l_0 
\end{pmatrix}\begin{pmatrix}
    x & y & z & t \\
    e & f & g & h
\end{pmatrix}=\begin{pmatrix}
    e & f & g & h \\
    x-l_0e & y-l_0f & z-l_0g & t-l_0h 
\end{pmatrix}.\]
If the output condition~\eqref{Eq: condition_bilinear} is not satisfied, it is again possible to use the partial quotients of $\alpha$ and $\beta$ by performing input transformations. In this case we have two possible different input transformations, because we can use either the partial quotients $a_i$ of the continued fraction of $\alpha$ or the partial quotients $b_i$ of the continued fraction of $\beta$. A natural choice in $\R$ is to alternate the input of one partial quotient of each continued fraction. After the input transformation of $\alpha$, the bilinear fractional transformation becomes
\[ \cfrac{x\left(a_0+\frac{1}{\alpha_1}\right)\beta+y\left(a_0+\frac{1}{\alpha_1}\right)+z\beta+t}{e\left(a_0+\frac{1}{\alpha_1}\right)\beta+f\left(a_0+\frac{1}{\alpha_1}\right)+g\beta+h}=\frac{(xa_0+z)\alpha_1\beta+(ya_0+t)\alpha_1+x\beta+y}{(ea_0+g)\alpha_1\beta+(fa_0+h)\alpha_1+e\beta+f},\]
In the following, we call it an \textit{$\alpha$-input transformation}.
After the input transformation of $\beta$, it becomes
\begin{equation*}
\frac{x\alpha\left(b_0+\frac{1}{\beta_1}\right)+y\alpha+z\left(b_0+\frac{1}{\beta_1}\right)+t}{e\alpha\left(b_0+\frac{1}{\beta_1}\right)+f\alpha+g\left(b_0+\frac{1}{\beta_1}\right)+h}=\frac{(xb_0+y)\alpha\beta_1+x\alpha+(zb_0+t)\beta_1+z}{(eb_0+f)\alpha\beta_1+e\alpha+(gb_0+h)\beta_1+g}.
\end{equation*}
In the following, we call it a \textit{$\beta$-input transformation}.
In matrix form, the two transformation can be represented as
\[\begin{pmatrix}
    x & y & z & t \\
    e & f & g & h
\end{pmatrix}\begin{pmatrix}
    a_0 & 0 & 1 & 0 \\
    0 & a_0 & 0 & 1 \\
    1 & 0 & 0 & 0 \\
    0 & 1 & 0 & 0 
\end{pmatrix}=\begin{pmatrix}
    xa_0+z & ya_0+t & x & y \\
    ea_0+g & fa_0+h & e & f 
\end{pmatrix},\]
for the $\alpha$-\textit{input transformation} and as
\[\begin{pmatrix}
    x & y & z & t \\
    e & f & g & h
\end{pmatrix}\begin{pmatrix}
    b_0 & 1 & 0 & 0 \\
    1 & 0 & 0 & 0 \\
    0 & 0 & b_0 & 1 \\
    0 & 0 & 1 & 0 
\end{pmatrix}=\begin{pmatrix}
    xb_0+y & x & zb_0+t & z \\
    eb_0+f & e & gb_0+h & g 
\end{pmatrix},\]
for the $\beta$-\textit{input transformation}.
Also in the case of the bilinear fractional transformation, it is possible to prove that, after a finite number of input transformations, condition~\eqref{Eq: condition_bilinear} is eventually satisfied, therefore $l_0=\lfloor\gamma\rfloor$ is computed correctly.

\subsection{Continued fractions in $\Q_p$}
In this section, we introduce the main algorithms for $p$-adic continued fractions. The idea is to use the usual Algorithm~\eqref{Alg: R} for simple continued fractions, by defining a suitable $p$-adic floor function $\lfloor\cdot\rfloor_p$. One of the first natural definitions for the floor function of a $p$-adic number has been provided by Ruban \cite{rub}.  Given $\alpha=\sum\limits_{n=-r}^{+\infty}c_np^n\in\mathbb{Q}_p$, with $c_n \in \{ 0, \ldots, p-1 \}$, Ruban's floor function is defined as \begin{equation}\label{Eq: rubfunc}
\lfloor\alpha\rfloor_p^\mathcal{R}=\sum\limits_{n=-r}^{0}c_np^n,
\end{equation}
and $\lfloor\alpha\rfloor_p^\mathcal{R}=0$ if $r<0$. 
Browkin's first algorithm, defined in \cite{browkin1978continued} and referred to here as \textit{Browkin I}, uses a similar floor function, but selects different representatives modulo $p$.
Given $\alpha = \sum\limits_{n=-r}^{+\infty} c_n p^n \in \mathbb{Q}_p$, where $c_n \in \left \{-\frac{p-1}{2}, \ldots, \frac{p-1}{2}\right \}$, Browkin's floor function, denoted by $s$, is defined as
\begin{equation}\label{Eq: sfunc}
s(\alpha)=\sum\limits_{n=-r}^{0}c_np^n,
\end{equation}
with $s(\alpha) = 0$ if $r < 0$. In \cite{browkin2001continued}, Browkin introduced another floor function to use in combination with the $s$ function. For $\alpha=\sum\limits_{n=-r}^{+\infty}c_np^n\in\mathbb{Q}_p$, where $c_n \in \{ -\frac{p-1}{2}, \ldots, \frac{p-1}{2} \}$, the floor function $t$ is defined as
\[t(\alpha)=\sum\limits_{n=-r}^{-1}c_np^n,\]
with $t(\alpha)=0$ if $r\leq 0$. Browkin's second algorithm, which we refer to as \textit{Browkin II}, works as follows: $\alpha_{0}=\alpha$ and then, for all $n\geq 0$,
\begin{align}\label{Alg: Br2} 
\begin{cases}
a_n=s(\alpha_n) &\text{if $n$ even}\\
a_n=t(\alpha_n) &\text{if $n$ odd and $v_p(\alpha_n-t(\alpha_n))= 0$} \\
a_n=t(\alpha_n)-sign(t(\alpha_n)) &\text{if $n$ odd and $v_p(\alpha_n-t(\alpha_n))\neq 0$}\\
\alpha_{n+1}=\frac{1}{\alpha_n-a_n}.
\end{cases}
\end{align}
Recently, a modification of \textit{Browkin II} has been proposed in \cite{murru2024new}, in order to improve its periodicity properties. For all $\alpha_0\in\mathbb{Q}_p$, the algorithm works as follows, for all $n\geq 0$

\begin{align}
\begin{cases}\label{Alg: MR}
a_n=s(\alpha_n) &\text{if $n$ even}\\
a_n=t(\alpha_n) &\text{if $n$ odd}\\
\alpha_{n+1}=\frac{1}{\alpha_n-a_n}.
\end{cases}
\end{align}

Throughout the paper, we mainly deal with the partial quotients of Ruban's and \textit{Browkin I} algorithms. In Section~\ref{Sec: otheralgo} also the results for Algorithm~\eqref{Alg: MR} are presented. Therefore, we provide the following definition.

\begin{Definition}\label{Def: represet}
Let $\mathcal{R}$, $\mathcal{B}$, and $\mathcal{T}$ denote the sets of all possible values of the floor functions $\lfloor \cdot \rfloor_p^\mathcal{R}$, $s$, and $t$, respectively. Therefore, these sets are:
\begin{align*}
\mathcal{R}&=\left\{\frac{c}{p^n } \ \Big| \ c, n\in\N, \ 0\leq c< p^{n+1} \right\}=\Z\left[\frac{1}{p}\right]\cap [0,p),\\
\mathcal{B}&=\left\{\frac{c}{p^n } \ \Big| \ n\in \N,\  c\in\Z, \ -\frac{p^{n+1}}{2}< c< \frac{p^{n+1}}{2} \right\}=\Z\left[\frac{1}{p}\right]\cap \left ( -\frac{p}{2} , \frac{p}{2} \right ),\\
\mathcal{T}&=\left\{\frac{c}{p^n } \ \Big| \ n\in \N,\  c\in\Z, \ -\frac{p^{n}}{2}< c< \frac{p^{n}}{2} \right\}=\Z\left[\frac{1}{p}\right]\cap \left ( -\frac{1}{2} , \frac{1}{2} \right ).
\end{align*}
\end{Definition}

It is well known that every finite continued fraction represents a rational number. The converse does not always hold in $\Q_p$ and it depends on the algorithm used to compute the partial quotients.
\begin{Proposition}[\cite{browkin1978continued,capuano2019effective,murru2024new}]\label{Prop: finexpansion}
Let $\alpha \in \Q$. Then:
\begin{enumerate}
\item The expansion of $\alpha$ obtained via \textit{Browkin I} algorithm is always finite.
\item The expansion of $\alpha$ obtained via Algorithm~\eqref{Alg: MR} is always finite.
\item If $\alpha < 0$, then for every prime number $p$, Ruban’s continued fraction of $\alpha$ does not terminate.
\item If $\alpha \ge 0$ and $\alpha \in \Z$, there are only finitely many prime numbers $p$ such that Ruban’s continued fraction of $\alpha$ does not terminate.
\item If $\alpha \ge 0$ and $\alpha \in \Q\backslash\Z$, there are only finitely many prime numbers $p$ such that Ruban’s continued fraction of $\alpha$ terminates.
\end{enumerate}
\end{Proposition}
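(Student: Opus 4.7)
The plan is to split the proposition into three qualitatively different blocks: the sign-based arguments for Ruban (parts (3) and (4)), the generic non-termination for positive non-integer rationals (part (5)), and the height-descent arguments for the balanced algorithms (parts (1) and (2)).

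Parts (3) and (4) I would handle by direct inspection. For (3), the Ruban floor takes values in $[0,p)$, so from $\alpha_n<0$ one gets $\alpha_n-a_n<0$ and hence $\alpha_{n+1}=1/(\alpha_n-a_n)<0$; induction yields $\alpha_n<0$ for all $n$, while termination would force $\alpha_n=a_n\geq 0$, a contradiction. For (4), any prime $p>\alpha$ satisfies $\alpha\in\Z\cap[0,p)\subseteq\mathcal{R}$, so $\lfloor\alpha\rfloor_p^{\mathcal{R}}=\alpha$ and the algorithm halts at step $0$; the finitely many primes $p\leq\alpha$ are the only possible exceptions.

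For (5), I would write $\alpha=a/b$ in lowest terms with $b\geq 2$ and restrict attention to primes $p\nmid b$ (the finitely many divisors of $b$ can contribute only finitely many exceptions). The first partial quotient is then the unique $a_0\in\{0,1,\dots,p-1\}$ with $ba_0\equiv a\pmod p$, and the plan is to show that for all but finitely many such $p$ one has $a_0>a/b$, so that $\alpha_1=1/(\alpha-a_0)<0$ and part (3) applies. The ``bad'' event $a_0\leq\lfloor a/b\rfloor$ restricts $a_0$ to a finite set of integer candidates $k$; each candidate $k$ is realized only by primes dividing the nonzero integer $a-kb$ (nonzero, since $\gcd(a,b)=1$ and $b\geq 2$ force $b\nmid a$), so at most finitely many primes are bad overall. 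The equality $\alpha_1=0$ cannot occur, as it would force $\alpha\in\Z$.

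For (1) and (2) the plan is a strict height descent built on the balanced character of $s$ and $t$. Writing $\alpha_n=u_n/(p^{e_n}w_n)$ in lowest terms with $\gcd(w_n,p)=1$ and $w_n>0$, I would combine $s(\alpha_n)=c/p^{e_n}$ with $|c|<p^{e_n+1}/2$ and the divisibility $p^{e_n+1}\mid u_n-cw_n$ to write $\alpha_{n+1}=w_n/(pr_n)$ with $r_n=(u_n-cw_n)/p^{e_n+1}$. The bound $|r_n|\leq|u_n|/p^{e_n+1}+w_n/2$ then yields, after reducing $\alpha_{n+1}$ to lowest terms, the estimates $|u_{n+1}|\leq w_n$ and $w_{n+1}\leq H_n(p+2)/(2p)$ with $H_n=\max(|u_n|,w_n)$, so for $p\geq 3$ the positive integer $H_n$ strictly decreases every two steps. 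Once $H_n=1$ one has $\alpha_n\in\{0,\pm 1/p^{e_n}\}\subset\mathcal{B}$ and the algorithm halts. The main technical obstacle I anticipate is the alternating algorithm~\eqref{Alg: MR}: at odd steps the $t$ floor introduces an extra factor of $p$ in the denominator of the chosen representative, and the analogous estimates must be reworked so as to still force a biennial decrease of $H_n$; the case $p=2$ for Browkin I may also need separate treatment since the factor $(p+2)/(2p)<1$ degenerates there.
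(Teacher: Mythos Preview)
The paper does not supply its own proof of this proposition: it is quoted from the literature (Browkin~\cite{browkin1978continued} for (1), Murru--Romeo~\cite{murru2024new} for (2), Capuano--Veneziano--Zannier~\cite{capuano2019effective} for the Ruban items), so there is nothing in the text to compare your argument against line by line.

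That said, your plan is sound and essentially matches what those references do. Parts (3) and (4) are exactly the standard sign/size observations. Your argument for (5) is the clean one: for $p\nmid ab$ the first Ruban digit is the residue $a_0\equiv ab^{-1}\pmod p$, and the finitely many integers $k\le\lfloor a/b\rfloor$ each pin down only the finitely many prime divisors of the nonzero integer $a-bk$; for every other prime $a_0>\alpha$, so $\alpha_1<0$ and (3) kicks in. This is correct as stated.

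For (1) your height descent is precisely Browkin's original argument; the factor you wrote as $(p+2)/(2p)$ is in fact a bit better (since $e_n\ge1$ for $n\ge1$ one gets $(p^2+2)/(2p^2)$), but either way it is $<1$ for odd $p$, and your worry about $p=2$ is moot because \textit{Browkin I} is only defined for odd primes. For (2) you correctly flag the extra bookkeeping: at odd steps the $t$-floor satisfies $|c|<p^{e_n}/2$ and only $v_p(\alpha_n-t(\alpha_n))\ge0$, so one gets $\alpha_{n+1}=w_n/r_n$ with $|r_n|<|u_n|/p^{e_n}+w_n/2$; combining one $s$-step and one $t$-step still forces $H_{n+2}<H_n$ for $p\ge3$, which is exactly how \cite{murru2024new} proceeds. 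So the obstacle you anticipate is real but resolves the way you expect.
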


\section{Metric theory of $p$-adic continued fractions}\label{Sec: measure}
In this section, we analyze the $p$-adic valuation of the partial quotients in the continued fraction expansions of elements of $\Q_p$ obtained via Ruban, \textit{Browkin I}, and Algorithm~\eqref{Alg: MR}. The metric results presented in this section are crucial for studying the termination of Algorithm~\ref{Alg: Gosp1} and Algorithm~\ref{Alg: Gosp2} for computing, respectively, the $p$-adic continued fraction of the M\"obius transformation and the bilinear fractional transformation.
In \cite{rub}, Ruban studies metric properties of $p$-adic numbers and $p$-adic continued fraction expansions. Let $\mu$ be the Haar measure on the additive group of $p$-adic numbers normed in such a way that $\mu(p \mathbb{Z}_p) = 1$. First, we report the main results of \cite{rub}.

\begin{Theorem}[\cite{rub}]\label{Thm: probval}
For any $i \geq 2$ and $y_j \in \mathcal{R}$, $j = 1,2,\ldots,i$, the sets 
\[\{ \alpha \in p \mathbb{Z}_p \mid a_j =y_j\}\]
are independent relative to $\mu$ and 
\[\mu\{ \alpha \in p \mathbb{Z}_p \mid a_j =y_j\} = p^{-2k},\]
with $k = -v_p(y_j)$.
\end{Theorem}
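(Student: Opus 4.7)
The plan is to prove the statement in two stages: first the marginal computation $\mu\{a_j = y_j\} = p^{-2k}$ via a $p$-adic change of variables, and then the independence (equivalently, the joint measure is the product of marginals) by an inductive renormalization argument based on the self-similarity of Ruban's algorithm. Without loss of generality I would focus on the first partial quotient $a_1$, since the argument for $a_j$ with $j\geq 1$ is identical up to re-indexing; note also that for $\alpha\in p\mathbb{Z}_p$ Ruban's algorithm produces $a_0=0$ and $\alpha_1=1/\alpha$, so only the $y_j$ with $v_p(y_j)\leq -1$ give a nonempty cylinder.

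For the marginal I would observe that the event $\{a_1=y_1\}$ is exactly the image of $p\mathbb{Z}_p$ under the Möbius map $g\colon\gamma\mapsto 1/(y_1+\gamma)$, since $a_1=y_1$ is equivalent to $1/\alpha\in y_1+p\mathbb{Z}_p$. Setting $k=-v_p(y_1)\geq 1$, for every $\gamma\in p\mathbb{Z}_p$ one has $v_p(\gamma)\geq 1>-k$, so the ultrametric inequality gives $|y_1+\gamma|_p=p^{k}$ constantly on $p\mathbb{Z}_p$, hence $|g'(\gamma)|_p=|y_1+\gamma|_p^{-2}=p^{-2k}$ is constant. The $p$-adic change-of-variables formula $\mu(g(A))=\int_A|g'|_p\,d\mu$ then immediately yields $\mu\{\alpha\in p\mathbb{Z}_p:a_1=y_1\}=p^{-2k}\cdot\mu(p\mathbb{Z}_p)=p^{-2k}$.

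For the joint measure I would exploit the fact that the same map $g$ gives a measure-rescaling bijection from $p\mathbb{Z}_p$ onto the cylinder $\{a_1=y_1\}$. Parametrizing $\alpha=g(\tilde\alpha)$ with $\tilde\alpha\in p\mathbb{Z}_p$, Ruban's recursion gives $\alpha_1=y_1+\tilde\alpha$, so $\alpha_1-a_1=\tilde\alpha$ and $\alpha_2=1/\tilde\alpha$. Consequently $a_{j+1}(\alpha)=a_j(\tilde\alpha)$ for every $j\geq 1$. Combining this shift with the Jacobian computation yields the recursive identity
\[\mu\{\alpha\in p\mathbb{Z}_p:a_1=y_1,\dots,a_i=y_i\}=p^{-2k_1}\,\mu\{\tilde\alpha\in p\mathbb{Z}_p:a_1(\tilde\alpha)=y_2,\dots,a_{i-1}(\tilde\alpha)=y_i\},\]
and induction on $i$ produces the product formula $\prod_{j=1}^{i}p^{-2k_j}$. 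Since this factorizes as the product of the marginals, the events $\{a_j=y_j\}$ are mutually independent.

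The main technical ingredient is the $p$-adic change-of-variables formula, which requires $g$ to be a $p$-adic analytic bijection with non-vanishing derivative of constant absolute value on $p\mathbb{Z}_p$; here $g$ is a Möbius map, so the hypotheses are immediate. Once this is granted, the rest of the argument is an elementary Fubini-type recursion, driven entirely by the self-similar structure of Ruban's algorithm. The only subtlety worth flagging is the admissibility constraint $v_p(y_j)\leq -1$ for $j\geq 1$: the corresponding cylinder is empty otherwise, so the product formula should be interpreted as holding precisely on admissible tuples.
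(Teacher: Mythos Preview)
Your argument is correct. Note, however, that the paper does not actually prove this theorem: it is quoted from Ruban's original article \cite{rub} and stated here without proof, so there is no in-paper argument to compare against. Your approach via the constant-Jacobian change of variables for the M\"obius map $g(\gamma)=1/(y_1+\gamma)$, followed by the self-similarity recursion $a_{j+1}(\alpha)=a_j(\tilde\alpha)$, is the standard and natural route; it is essentially how Ruban proceeds. Two minor remarks: first, the marginal step can be made entirely elementary by observing that $\{a_1=y_1\}$ is exactly the ball $1/y_1+p^{2k+1}\mathbb{Z}_p$, whose $\mu$-measure is $p^{-2k}$ under the normalization $\mu(p\mathbb{Z}_p)=1$; second, your claim that the marginal for general $j$ is ``identical up to re-indexing'' is really a consequence of the product formula for the joint (summing out the other coordinates, each contributing total mass $1$), so it would be cleaner to derive the general marginal after the induction rather than before.
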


\begin{Theorem}[\cite{rub}]\label{Thm: frequency}
Let $y \in \mathcal{R}$, $v_p(y) = -k$. For \textit{almost all} $\alpha \in p \mathbb{Z}_p$ the frequency of repetition of $y$ in the decomposition of $\alpha$ into a continued fraction is the same, it is independent of $\alpha$ and is equal to $p^{-2k}$.
\end{Theorem}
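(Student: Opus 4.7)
The plan is to deduce Theorem~\ref{Thm: frequency} directly from Theorem~\ref{Thm: probval} as a textbook application of the strong law of large numbers. The crucial point is that Theorem~\ref{Thm: probval} can be read as the assertion that the partial quotients $(a_j)_{j\geq 1}$, viewed as measurable functions on the probability space $(p\mathbb{Z}_p,\mu)$, form an independent sequence whose marginal distribution does not depend on $j$: namely $\mu\{a_j = y\} = p^{-2k}$ for every $y \in \mathcal{R}$ with $v_p(y) = -k$. In other words, the $a_j$ are i.i.d.\ random variables with an explicit discrete law on $\mathcal{R}$.

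With this reinterpretation in hand, I would fix an arbitrary $y \in \mathcal{R}$ with $v_p(y) = -k$ and introduce the indicator random variables $X_j(\alpha) := \mathbf{1}_{\{a_j(\alpha) = y\}}$. Since the $a_j$ are i.i.d., so are the $X_j$: they form a bounded i.i.d.\ Bernoulli sequence of common mean $p^{-2k}$. Kolmogorov's strong law of large numbers then yields
\[
\frac{1}{N}\sum_{j=1}^N X_j(\alpha) \longrightarrow p^{-2k} \qquad \text{as } N \to \infty,
\]
for $\mu$-almost every $\alpha \in p\mathbb{Z}_p$. The left-hand side is by construction the relative frequency with which $y$ appears among the first $N$ partial quotients of $\alpha$, so passing to the limit identifies the asymptotic frequency of $y$ in the expansion with the claimed value $p^{-2k}$, independent of $\alpha$.

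I do not anticipate any serious obstacle. The measurability of each $a_j$ is automatic, since $a_j$ is obtained from $\alpha$ by a finite number of continuous operations (inversion together with the application of $\lfloor\cdot\rfloor_p^{\mathcal{R}}$) outside the null set where an intermediate complete quotient vanishes, and the boundedness of the $X_j$ makes the hypotheses of SLLN trivial. The only minor subtlety is that the exceptional null set a priori depends on the chosen $y$; to upgrade the conclusion to a single full-measure set on which the asymptotic frequency is correct for every $y \in \mathcal{R}$ simultaneously, one unions these null sets over the countable set $\mathcal{R}$, which remains null.
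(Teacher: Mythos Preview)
Your argument is correct. Theorem~\ref{Thm: probval} says precisely that the partial quotients $a_1,a_2,\ldots$ are i.i.d.\ $\mathcal{R}$-valued random variables on $(p\mathbb{Z}_p,\mu)$ with $\mu\{a_j=y\}=p^{2v_p(y)}$, and from there Kolmogorov's strong law applied to the bounded i.i.d.\ indicators $X_j=\mathbf{1}_{\{a_j=y\}}$ gives the claim immediately. The measurability and boundedness checks you sketch are routine, and the countable union over $y\in\mathcal{R}$ to obtain a single full-measure set is fine.

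The route taken in Ruban's paper, which the present paper reproduces in its proof of the analogous Theorem~\ref{thm: 7} for Algorithm~\eqref{Alg: MR}, is different: one verifies that the Gauss-type shift $T\alpha=\tfrac{1}{\alpha}-\lfloor\tfrac{1}{\alpha}\rfloor_p^{\mathcal{R}}$ on $p\mathbb{Z}_p$ is $\mu$-preserving and ergodic, and then invokes Birkhoff's ergodic theorem for the indicator $f=\mathbf{1}_{\{a_1=y\}}$. Your approach is more economical here because full independence of the $a_j$ is already supplied by Theorem~\ref{Thm: probval}, so no dynamical machinery is needed; the ergodic-theoretic route, by contrast, requires only ergodicity of the shift rather than independence, and is the natural framework for variants such as Algorithm~\eqref{Alg: MR}, where the paper works with the two-step shift $T=T_s\circ T_t$.
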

Theorem~\ref{Thm: frequency} states that, for $\mu$-\textit{almost all} $p$-adic numbers, every partial quotient in $\mathcal{R}$ appears infinitely often in the Ruban $p$-adic continued fraction expansion. In particular, for any $v \in \Z$, there exist infinitely many indices $n$ such that $v_p(a_n)<v$.

\begin{Corollary}
Let $k\geq 1$. For \textit{almost all} $\alpha \in p \mathbb{Z}_p$ there exist infinitely many $i \in \mathbb{N}$ such that $v_p(a_i) \le -k$, where $a_i$ is the $i$-th partial quotient of the Ruban's continued fraction expansion of $\alpha$.
\end{Corollary}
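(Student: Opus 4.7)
The plan is to deduce the corollary directly from Theorem~\ref{Thm: probval} (via the second Borel--Cantelli lemma); alternatively, it follows immediately from Theorem~\ref{Thm: frequency}, since a positive frequency of occurrence forces infinitely many occurrences. I prefer the Borel--Cantelli route because it makes explicit the independence input.

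First I would fix a single element $y \in \mathcal{R}$ with $v_p(y) = -k$; the natural choice is $y = 1/p^k$, which lies in $\mathcal{R} = \Z[1/p] \cap [0,p)$ since $0 \le 1/p^k < p$ for every $k \ge 1$. Then I would introduce the events
\[
E_j := \{\alpha \in p\Z_p \mid a_j(\alpha) = y\}, \qquad j \ge 1,
\]
in the probability space $(p\Z_p, \mu)$ (recall $\mu(p\Z_p) = 1$). Applying Theorem~\ref{Thm: probval} with the constant choice $y_1 = \cdots = y_i = y$ shows that any finite subfamily of $\{E_j\}_{j \ge 1}$ is jointly independent, and $\mu(E_j) = p^{-2k}$ for every $j$.

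Since $\sum_{j \ge 1} \mu(E_j) = \sum_{j \ge 1} p^{-2k} = +\infty$ and the events are mutually independent, the second Borel--Cantelli lemma gives $\mu(\limsup_j E_j) = 1$. Thus, for $\mu$-almost all $\alpha \in p\Z_p$, the equality $a_j(\alpha) = y$ holds for infinitely many indices $j$, and for each such index $v_p(a_j) = -k \le -k$, which proves the claim.

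The only minor subtlety is confirming that Theorem~\ref{Thm: probval}, as stated for finite tuples of prescribed values, indeed delivers the mutual independence required by Borel--Cantelli; this is immediate because Borel--Cantelli only needs independence of arbitrary finite subfamilies of $\{E_j\}$, which is exactly what the theorem provides. No further work is needed, and there is no serious obstacle.
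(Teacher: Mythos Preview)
Your proof is correct. The paper itself gives no explicit proof of this corollary; it is placed immediately after Theorem~\ref{Thm: frequency} and is treated as an obvious consequence of the sentence preceding it: a positive limiting frequency $p^{-2k}$ for the partial quotient $y$ forces $y$ to occur infinitely often in the expansion of almost every $\alpha$.

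Your route via the second Borel--Cantelli lemma applied to Theorem~\ref{Thm: probval} is a genuinely different, and in a sense more economical, argument. The paper's implicit argument rests on Theorem~\ref{Thm: frequency}, whose proof (in Ruban's original paper) goes through the ergodicity of the Gauss-type map and Birkhoff's ergodic theorem; you bypass all of that by using only the independence and the measure computation from Theorem~\ref{Thm: probval}. What the paper's approach buys is a stronger quantitative statement (an exact asymptotic frequency, not merely ``infinitely often''); what your approach buys is that it needs no ergodic theory at all. Your remark about the ``minor subtlety'' is also handled correctly: mutual independence of the full block $E_1,\ldots,E_i$ (for any choice of the $y_j$) given by Theorem~\ref{Thm: probval} implies independence of any finite subfamily of $\{E_j\}_{j\ge 1}$, which is exactly what Borel--Cantelli requires.
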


The following corollary extends the previous result to all $p$-adic numbers.

\begin{Corollary}\label{Coro: unbounded}
Let $k\geq 1$. For $\mu$-\textit{almost all} $\alpha \in \Q_p$ there exist infinitely many $i \in \mathbb{N}$ such that $v_p(a_i) \le -k$, where $a_i$ is the $i$-th partial quotient of the Ruban's continued fraction expansion of $\alpha$.
\end{Corollary}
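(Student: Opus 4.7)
The plan is to upgrade the previous corollary from $p\mathbb{Z}_p$ to all of $\Q_p$ using a countable partition of $\Q_p$ and the translation invariance of the Haar measure $\mu$. Every $\alpha \in \Q_p$ has a unique Ruban digit $a_0(\alpha) = \lfloor \alpha \rfloor_p^{\mathcal{R}} \in \mathcal{R}$, and by construction of the floor function $\lfloor\cdot\rfloor_p^{\mathcal{R}}$ the difference $\alpha - a_0(\alpha)$ lies in $p\Z_p$. Since $\mathcal{R}$ is countable (Definition~\ref{Def: represet}), this produces the disjoint decomposition $\Q_p = \bigsqcup_{c \in \mathcal{R}}(c + p\Z_p)$.

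Next I would verify that the partial quotients behave well under the translation $T_c : \alpha \mapsto \alpha - c$ from the coset $c + p\Z_p$ onto $p\Z_p$. The map $T_c$ is a $\mu$-preserving bijection, and for $\alpha \in c + p\Z_p$ a one-line check of Ruban's recursion shows that the element $\beta = \alpha - c \in p\Z_p$ has $\lfloor\beta\rfloor_p^{\mathcal{R}} = 0$ and first complete quotient $1/\beta = 1/(\alpha - c) = \alpha_1$, which coincides with the first complete quotient of $\alpha$. From there the two algorithms proceed identically, so if $(a_i(\alpha))_{i \geq 0}$ and $(a_i'(\beta))_{i \geq 0}$ denote the Ruban partial quotients of $\alpha$ and $\beta$ respectively, then $a_i(\alpha) = a_i'(\beta)$ for every $i \geq 1$.

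Finally, let $E \subset \Q_p$ be the set of $\alpha$ for which only finitely many indices $i$ satisfy $v_p(a_i(\alpha)) \leq -k$. For each $c \in \mathcal{R}$, the compatibility in the previous paragraph shows that $T_c(E \cap (c + p\Z_p))$ is contained in the corresponding bad set in $p\Z_p$, which has $\mu$-measure zero by the previous corollary. Invariance of $\mu$ under $T_c$ gives $\mu(E \cap (c + p\Z_p)) = 0$, and countable additivity over $c \in \mathcal{R}$ yields $\mu(E) = 0$, proving the statement.

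The only subtle point — hardly an obstacle — is the alignment of indices under the shift: one must be careful that the reduction to $p\Z_p$ matches the partial quotients of $\alpha$ from index $1$ onward (the index $0$ partial quotient $a_0(\alpha) = c$ is discarded, but this affects at most one term and so does not alter the conclusion about infinitely many indices).
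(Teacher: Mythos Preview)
Your proposal is correct and follows essentially the same route as the paper: decompose $\Q_p$ into the countable union of cosets $a + p\Z_p$ with $a \in \mathcal{R}$, invoke translation invariance of $\mu$, and reduce to the previous corollary on $p\Z_p$. You are in fact more explicit than the paper, which simply asserts $V_k = \bigcup_{a \in \mathcal{R}}(a + U_k)$ without spelling out the index-alignment check that you carry out.
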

\begin{proof}
For $k\geq 1$, let 
\[U_k := \left\{ \alpha \in p\mathbb{Z}_p \,\middle|\, v_p(a_i) \ge -k \text{ for all but finitely many } i \ge 1 \right\}.\]By the previous corollary, $\mu(U_{k}) = 0$, and for all $a \in \mathcal{R}$, $\mu(a+U_{k}) = 0$, since the Haar measure is translation invariant.
Let 
\[V_k := \left\{ \alpha \in \Q_p \,\middle|\, v_p(a_i) \ge -k \text{ for all but finitely many } i \ge 1 \right\}.\]
We have $V_{k} = \bigcup\limits_{a \in \mathcal{R}} (a + U_{k})$ and, since $\mathcal{R} \subset \Q$ is countable, then $\mu(V_{k}) = 0$.
\end{proof}

Therefore, for any integer $k\geq 1$, the partial quotients of $\mu$-\textit{almost all} $\alpha \in \Q_p$ frequently have valuation less than $-k$.

\begin{Remark}
The results above hold also for \textit{Browkin I} algorithm. In particular, \cite[Theorem 3]{rub} holds also for the digits in $\{ -\frac{p-1}{2}, \ldots, \frac{p-1}{2} \}$. As a consequence, also Theorem~\ref{Thm: probval} holds for \textit{Browkin I} (see \cite[Theorem 4]{rub} for the proof). 
Finally, Theorem~\ref{Thm: frequency} (see \cite[Theorem 7]{rub} for the proof) also holds for the \textit{Browkin I} expansion, since the argument proceeds analogously in this setting and relies on the fact that both continued fraction expansions share the same approximation property,
\[\left| \alpha - \frac{A_n}{B_n} \right|_p < p^{-n} \quad \text{for all } n\ge 0,\]
where $\frac{A_n}{B_n} = [a_0, \ldots, a_n]$.
\end{Remark}

In the final part of this section, we study the $p$-adic valuation of the partial quotients obtained via Algorithm~\eqref{Alg: MR}. 
In this case, the partial quotients in even positions lie in $\mathcal{B}$, while those in odd positions lie in $\mathcal{T}$, as defined in Definition~\ref{Def: represet}. At the end of this section, we prove that for $\mu$-\textit{almost all} $\alpha \in \Q_p$ the partial quotients of the expansion obtained via Algorithm~\eqref{Alg: MR} in the odd positions have unbounded $p$-adic valuation. To do so, we use techniques similar to those of Ruban in \cite{rub}. Analogously to Theorem~\ref{Thm: probval}, the following result holds.

\begin{Theorem}\label{Thm: 4MR}
For arbitrary integral $i > 1$ and arbitrary $y_j$ ($j =1, 2, \ldots, i$) such that \begin{align*}y_{2j} \in \mathcal{B} \quad & \text{for} \quad 0<j \le \left \lfloor \frac{i}{2} \right \rfloor \\
y_{2j+1} \in \mathcal{T} \quad & \text{for} \quad 0\le j \le \left \lfloor \frac{i-1}{2} \right \rfloor
\end{align*} the sets $\{ \alpha \mid a_j =y_j\}$ in $p \Z_p$ are independent relative to $\mu$ and, if $-n_j = v_p(y_j)$, then
\[\begin{aligned}&\mu\{\alpha \mid a_j = y_j\} = p^{-2n_j +1} \quad &\text{if }j \equiv
 1 \pmod{2}, \\ 
&\mu\{\alpha \mid a_j = y_j\}= p^{-2n_j} \quad &\text{if }j \equiv 0 \pmod{2}.\end{aligned}\]
\end{Theorem}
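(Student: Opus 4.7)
The plan is to mimic the proof of Theorem~\ref{Thm: probval} for Algorithm~\eqref{Alg: MR}, exploiting the recursion $\alpha_{j+1} = 1/(\alpha_j - a_j)$ via a $p$-adic change of variables. The two exponents $p^{-2n_j}$ (for $j$ even) and $p^{-2n_j+1}$ (for $j$ odd) should emerge from the different number of $p$-adic digits that the floor functions $s$ and $t$ pin down: fixing $a_j = y_j$ at an even step constrains the digits of $\alpha_j$ at positions $-n_j,\ldots,0$ (i.e.\ $n_j+1$ digits), while at an odd step it constrains only the $n_j$ digits at positions $-n_j,\ldots,-1$.

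To handle the base case $i=1$, I would observe that for $\alpha\in p\Z_p$ the event $\{t(\alpha_1) = y_1\}$ is exactly $\{\alpha_1 \in y_1 + \Z_p\}$. Parametrizing $\alpha = 1/(y_1 + z)$ with $z\in\Z_p$, the $p$-adic change of variables yields
\[\mu\{\alpha \in p\Z_p \mid a_1 = y_1\} = \int_{\Z_p} |y_1+z|_p^{-2}\, d\mu(z) = p^{-2n_1}\cdot \mu(\Z_p) = p^{-2n_1+1},\]
since $|y_1+z|_p = p^{n_1}$ and $\mu(\Z_p)=p$. The even-$j$ base case follows from an analogous computation in which the free parameter varies in $p\Z_p$ (rather than $\Z_p$), because $s(\alpha_j)=y_j$ forces $\alpha_j - y_j \in p\Z_p$; here the target domain has measure $1$ instead of $p$, which is precisely the source of the different exponent.

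For general $i$, I would proceed by induction, exploiting that on each cylinder $\{a_1=y_1,\ldots,a_{i-1}=y_{i-1}\}$ the variable $\alpha$ is a $p$-adic-analytic function of a free parameter (ranging in $\Z_p$ or $p\Z_p$ according to the parity of $i-1$) with an explicit Jacobian determined by $y_1,\ldots,y_{i-1}$. Pulling back the cylinder $\{a_i = y_i\}$ via the recursion $\alpha_{i+1}=1/(\alpha_i - y_i)$ multiplies the Jacobian by $|\alpha_i - y_i|_p^{-2}$ and updates the free parameter's domain. The crucial point, which drives both independence and the product formula, is that the forward map $\alpha \mapsto \alpha_{i+1}$ carries each cylinder bijectively onto the correct natural domain for $\alpha_{i+1}$, with no spurious correction factor.

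The main obstacle will be verifying this bijection and matching the domains at every step, in view of the alternation between $s$-steps and $t$-steps: the natural domain of $\alpha_j$ toggles with the parity of $j$, so one must carefully check that at each induction step the image of a cylinder is exactly the correct coset-shape, not a proper subset or an enlarged set. This parity-dependent bookkeeping has no analogue in Ruban's Theorem~\ref{Thm: probval}, where the floor function is the same at every step, and is the technical core of the adaptation.
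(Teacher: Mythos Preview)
Your proposal is correct and follows exactly the approach the paper indicates: the paper's own proof consists of a single sentence, ``The proof follows the same reasonings of \cite[Theorem~4]{rub},'' so your sketch of the $p$-adic change-of-variables induction (with the parity-dependent domain $\Z_p$ versus $p\Z_p$ accounting for the exponent shift) is in fact a more explicit rendering of what the paper leaves to the reader.
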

\begin{proof}
The proof follows the same reasonings of  \cite[Theorem 4]{rub}.
\end{proof}

The following results are needed to prove the forthcoming Theorem~\ref{thm: 7}.

\begin{Proposition}\label{prop: cilindri}
Every $p$-ball in $p \Z_p$ is countable union of cylinders of the form
\[C(y_1, \ldots, y_n) := \{ \alpha \mid a_1=y_1, \ldots, a_n=y_n \},\]
with $y_{2i} \in \mathcal{B}$ for $0<i \le \left \lfloor \frac{n}{2} \right \rfloor$, $y_{2i+1} \in \mathcal{T}$ for $0\le i \le \left \lfloor \frac{n-1}{2} \right \rfloor$, and $\alpha = [0,a_1, a_2, \ldots]$. We call $n$ the order of the cylinder.
\end{Proposition}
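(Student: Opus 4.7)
The plan is to show that cylinders of sufficiently high order have arbitrarily small $p$-adic diameter, and then to cover any prescribed $p$-ball by cylinders whose order is large enough. The tool that powers everything is the approximation property of the convergents under Algorithm~\eqref{Alg: MR}, namely that $v_p(\alpha - A_n/B_n) \to \infty$ as $n \to \infty$; more precisely, one has $v_p(\alpha - A_n/B_n) \ge n+1$, in full analogy with the property recalled in the remark preceding the statement (valid for Ruban and \textit{Browkin I}) and established by the same matrix recursion argument.

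The key estimate comes first. Since $A_n/B_n = [0,y_1,\ldots,y_n]$ is determined by the first $n$ partial quotients, any two elements $\alpha,\beta \in C(y_1,\ldots,y_n)$ share the same $n$-th convergent. The ultrametric inequality then gives
\[
v_p(\alpha-\beta) \ge \min\!\left\{ v_p\!\left(\alpha - \tfrac{A_n}{B_n}\right),\, v_p\!\left(\beta - \tfrac{A_n}{B_n}\right)\right\} \ge n+1,
\]
so for any $\gamma \in C(y_1,\ldots,y_n)$ we have $C(y_1,\ldots,y_n) \subseteq \gamma + p^{n+1}\mathbb{Z}_p$. In words: cylinders of order $n$ sit inside a $p$-adic ball of ``radius'' $p^{-(n+1)}$.

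Next comes the covering argument. Fix a $p$-ball $B = \alpha_0 + p^k\mathbb{Z}_p \subseteq p\mathbb{Z}_p$ with $k \ge 1$. For each $\alpha \in B$ choose any $n \ge k-1$ and set $C_\alpha := C(a_1(\alpha),\ldots,a_n(\alpha))$. By the previous step, $C_\alpha \subseteq \alpha + p^{n+1}\mathbb{Z}_p \subseteq \alpha + p^k\mathbb{Z}_p = B$, hence $B = \bigcup_{\alpha \in B} C_\alpha$. Since the sets $\mathcal{B}$ and $\mathcal{T}$ of Definition~\ref{Def: represet} are countable, there are only countably many cylinders of any fixed order, and hence only countably many in total, so the above union collapses to a countable one.

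The main technical point, and the only real obstacle in the argument, is the approximation inequality $v_p(\alpha - A_n/B_n) \ge n+1$ for Algorithm~\eqref{Alg: MR}; this is not stated explicitly in the excerpt but follows by a routine induction on the convergent recursion $A_n = a_nA_{n-1}+A_{n-2}$, $B_n = a_nB_{n-1}+B_{n-2}$, exploiting the structural valuation bounds on the partial quotients (in particular $v_p(a_n) \le -1$ at odd $n$) that are built into Algorithm~\eqref{Alg: MR}.
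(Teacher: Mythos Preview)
Your argument is correct and follows essentially the same route as the paper: establish an approximation bound $|\alpha - A_n/B_n|_p \le p^{-c(n)}$ via the valuation estimate $v_p(B_n)=\sum_{i=1}^n v_p(a_i)$, deduce that order-$n$ cylinders sit inside balls of controlled radius, and then cover a given ball by the cylinders through each of its points, invoking countability of $\mathcal{B}$ and $\mathcal{T}$. The only differences are cosmetic: the paper carries out the convergent estimate explicitly (obtaining $|\alpha - A_n/B_n|_p \le p^{-n}$ from the loose bound $v_p(B_n)\le -\lfloor n/2\rfloor$), whereas you assert the sharper $v_p(\alpha - A_n/B_n)\ge n+1$ and defer its proof; your constant is in fact correct, since among $a_1,\dots,a_n$ there are $\lceil n/2\rceil$ odd-indexed partial quotients each with $v_p\le -1$, giving $v_p(B_n)\le -\lceil n/2\rceil$ and hence $-v_p(B_nB_{n+1})\ge n+1$.
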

\begin{proof}
Let $\alpha \in p\Z_p$, and let $\frac{A_n}{B_n}$ be the $n$-th convergent of the expansion of $\alpha$ via Algorithm~\eqref{Alg: MR}, i.e.
\[ \frac{A_n}{B_n} = [0,a_1, \ldots, a_n].\] 
We have
\[\left | \alpha - \frac{A_n}{B_n} \right |_p = p^{v_p(B_nB_{n+1})}.\]
Moreover, it can be proved by induction that
\[v_p(B_n) = v_p(a_1) + \cdots + v_p(a_n) \le -\left \lfloor \frac{n}{2} \right \rfloor,\]
and thus
\[v_p(B_nB_{n+1}) = 2 (v_p(a_1) + \cdots + v_p(a_n)) + v_p(a_{n+1}) \le -\left \lfloor \frac{n}{2} \right \rfloor - \left \lfloor \frac{n+1}{2} \right \rfloor = -n.\]
Therefore
\[\left | \alpha - \frac{A_n}{B_n} \right |_p \le p^{-n} \quad \text{and} \quad \left | \alpha - \frac{A_{n+1}}{B_{n+1}} \right |_p < p^{-n}.\]
Now we show that a cylinder of order $n+1$ is contained in a $p$-ball of radius $p^{-n}$.
Let
\[C=C(y_1, \ldots, y_{n+1}) = \{ \alpha \mid a_1=y_1, \ldots, a_{n+1}=y_{n+1} \}.\]
Let $x,y \in C$, then the $(n+1)$-th convergents relative to $x$ and $y$ are the same. Then,
\[\forall \ \alpha \in C(y_1, \ldots, y_{n+1}), \quad \left | \alpha - \frac{A_{n+1}}{B_{n+1}} \right |_p < p^{-n},\]
so that
\[C(y_1, \ldots, y_{n+1}) \subseteq B \left ( \frac{A_{n+1}}{B_{n+1}}, p^{-n} \right ).\]
We prove that every $p$-ball of radius $p^{-n}$ is countable union of cylinders of order $n+1$.
Let $B \subseteq p \Z_p$ a $p$-ball of radius $p^{-n}$. Given $\alpha \in B$, we consider the cylinder $C(a_1, \ldots, a_{n+1})$. We proved that
\[C(a_1, \ldots, a_{n+1}) \subseteq B \left ( \frac{A_{n+1}}{B_{n+1}}, p^{-n}\right ).\]
Since $\left | \alpha -  \frac{A_{n+1}}{B_{n+1}} \right |_p< p^{-n}$ and the $p$-ball $B$ has radius $p^{-n}$, it follows that $\frac{A_{n+1}}{B_{n+1}} \in B$. Consequently, $B=B\left ( \frac{A_{n+1}}{B_{n+1}}, p^{-n}\right )$.
Therefore, every $\alpha \in B$ belongs to a cylinder of order $n+1$ that is contained in $B$. Hence,
\[B = \bigcup_{C \subset B \text{ cylinder of order }n+1} C,\]
and therefore it is a countable union of cylinders, since the set of cylinders of order $n+1$ is countable.
\end{proof}

\begin{Corollary}\label{coro: sigmaMR}
Cylinders generate the Borel sigma-algebra of $p \Z_p$.
\end{Corollary}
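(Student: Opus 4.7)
The plan is to prove both inclusions. Let $\mathcal{C}$ denote the family of all cylinders and let $\Sigma$ be the Borel $\sigma$-algebra of $p\Z_p$. I would first establish $\Sigma \subseteq \sigma(\mathcal{C})$, which is essentially the content of Proposition~\ref{prop: cilindri}: the $p$-balls form a basis for the $p$-adic topology of $p\Z_p$ and therefore generate $\Sigma$; the proposition states that every $p$-ball is a countable union of cylinders, so every $p$-ball lies in $\sigma(\mathcal{C})$, whence $\Sigma \subseteq \sigma(\mathcal{C})$.

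For the reverse inclusion $\sigma(\mathcal{C}) \subseteq \Sigma$, it suffices to verify that each cylinder $C(y_1,\ldots,y_n)$ is itself Borel, and I would do this by showing that it is open (in fact clopen). The key point is that partial quotient extraction is continuous in the $p$-adic topology: given $\alpha \in C(y_1,\ldots,y_n)$, there exists $\varepsilon > 0$, depending on $v_p(y_1),\ldots,v_p(y_n)$, such that every $\alpha' \in p\Z_p$ with $|\alpha - \alpha'|_p \le \varepsilon$ has the same first $n$ partial quotients as $\alpha$. This is proved by induction on $n$: choosing $\varepsilon$ small enough forces $v_p(\alpha)=v_p(\alpha')$ and makes $1/\alpha$ and $1/\alpha'$ agree on all the digits read off by the floor function $s$ or $t$, so that $a_1(\alpha)=a_1(\alpha')$; the same argument is then applied to the next complete quotients, with a controlled loss of precision at each step. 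This gives $B(\alpha,\varepsilon)\subseteq C(y_1,\ldots,y_n)$, so cylinders are open. Since $p\Z_p$ is a disjoint union of cylinders of any fixed order, the complement of $C(y_1,\ldots,y_n)$ is a union of other cylinders of order $n$, hence also open, and cylinders are therefore clopen and in particular Borel.

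The only delicate step is tracking digit precision through the inductive continuity argument, but this is routine: the inversion map $x\mapsto 1/x$ and the floor functions $s$ and $t$ are locally $p$-adic uniformly continuous on their relevant domains, so a finite composition of $n$ such operations is too, and the required $\varepsilon$ can be written down explicitly in terms of the valuations $v_p(y_1),\ldots,v_p(y_n)$.
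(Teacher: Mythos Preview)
Your proof is correct. The paper states this corollary without proof, taking it as an immediate consequence of Proposition~\ref{prop: cilindri}; your argument for $\Sigma \subseteq \sigma(\mathcal{C})$ is exactly that intended consequence. For the reverse inclusion $\sigma(\mathcal{C}) \subseteq \Sigma$ the paper supplies nothing explicit, and your continuity argument showing cylinders are clopen is a correct and natural way to fill this in, fully consistent with the digit-tracking tools already developed (Lemma~\ref{Lem: inverse} and Section~\ref{Sec: aux}).
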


\begin{Proposition}\label{Prop: MRmeas}
Let
\[
\begin{array}{rclcrcl}
T_t : p\Z_p &\longrightarrow& \Z_p, &\qquad& 
T_s : \Z_p &\longrightarrow& p\Z_p, \\[3pt]
x &\longmapsto& \displaystyle\frac{1}{x} - t\!\left(\frac{1}{x}\right), &&
x &\longmapsto& \displaystyle\frac{1}{x} - s\!\left(\frac{1}{x}\right),
\end{array}
\]
and let
\[T = T_s \circ T_t: p \Z_p \longrightarrow p \Z_p.\]
Then, the map $T$ is measure preserving (with respect to $\mu$). Notice that $T$ is not defined on all the elements of $p \Z_p$ with finite expansion (i.e. the rationals in $p \Z_p$), but the set of these elements has Haar measure $0$.
\end{Proposition}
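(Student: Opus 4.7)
The plan is to identify $T$ with the shift-by-two on Algorithm~\eqref{Alg: MR} expansions, reduce measure preservation to a check on cylinders via Corollary~\ref{coro: sigmaMR}, and then apply Theorem~\ref{Thm: 4MR}.

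First I would unwind the definitions to show that $T$ shifts the MR-expansion by two positions. Given $\alpha \in p\Z_p$ with expansion $[0, a_1, a_2, a_3, \ldots]$ from Algorithm~\eqref{Alg: MR}, the complete quotients satisfy $\alpha_1 = 1/\alpha$, $\alpha_2 = 1/(\alpha_1 - a_1)$, $\alpha_3 = 1/(\alpha_2 - a_2)$. Directly from the definitions, $T_t(\alpha) = \alpha_1 - t(\alpha_1) = \alpha_1 - a_1$ (since position $1$ is odd), and
\[T_s(T_t(\alpha)) = \tfrac{1}{\alpha_1 - a_1} - s\bigl(\tfrac{1}{\alpha_1 - a_1}\bigr) = \alpha_2 - s(\alpha_2) = \alpha_2 - a_2 = \tfrac{1}{\alpha_3}.\]
Running Algorithm~\eqref{Alg: MR} on $1/\alpha_3 \in p\Z_p$ produces $0$ at position $0$ and then $a_3, a_4, \ldots$, so $T(\alpha) = [0, a_3, a_4, \ldots]$. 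By Proposition~\ref{Prop: finexpansion}, the set where the expansion terminates (and $T$ is undefined) is contained in $\Q$, hence $\mu$-null.

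By Corollary~\ref{coro: sigmaMR}, cylinders generate the Borel $\sigma$-algebra of $p\Z_p$; since they form a $\pi$-system and $\mu$ is finite, it suffices to check $\mu(T^{-1}(C)) = \mu(C)$ for every cylinder $C = C(y_1, \ldots, y_n)$. The shift interpretation yields
\[T^{-1}(C(y_1, \ldots, y_n)) = \bigsqcup_{a_1^* \in \mathcal{T},\ a_2^* \in \mathcal{B}} C(a_1^*, a_2^*, y_1, \ldots, y_n),\]
up to a $\mu$-null set. Crucially, the indices $j$ and $j+2$ have the same parity, so the admissibility constraint ($y_j \in \mathcal{T}$ for $j$ odd, $y_j \in \mathcal{B}$ for $j$ even) is preserved and each cylinder on the right is of the correct type. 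Applying independence from Theorem~\ref{Thm: 4MR},
\[\mu\bigl(C(a_1^*, a_2^*, y_1, \ldots, y_n)\bigr) = \mu\{a_1 = a_1^*\}\, \mu\{a_2 = a_2^*\} \prod_{j=1}^n \mu\{a_{j+2} = y_j\},\]
and the same theorem gives $\mu\{a_{j+2} = y_j\} = \mu\{a_j = y_j\}$, since its formula depends only on $v_p(y_j)$ and the parity of the index. Summing over $a_1^*, a_2^*$, each sum equals $\mu(p\Z_p) = 1$, because the events $\{a_j = y\}$ partition $p\Z_p$ up to the $\mu$-null set of rationals. The remaining product equals $\mu(C(y_1, \ldots, y_n))$ by a final use of independence, giving $\mu(T^{-1}(C)) = \mu(C)$.

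The main obstacle is Step 1: one must carefully track the alternating $s$/$t$ structure of Algorithm~\eqref{Alg: MR} to verify that $T$ is precisely the two-fold shift, and that the parity of positions is preserved so the measures from Theorem~\ref{Thm: 4MR} line up after the shift. Once that identification is in place, the rest is a direct bookkeeping computation, and the $\mu$-null set of rationals where $T$ fails to be defined is harmless.
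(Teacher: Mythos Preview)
Your proof is correct and follows essentially the same approach as the paper: identify $T$ with the two-step shift on Algorithm~\eqref{Alg: MR} expansions, reduce to cylinders via Corollary~\ref{coro: sigmaMR} (the paper cites Proposition~\ref{prop: cilindri} together with a uniqueness-of-measures theorem), and then invoke Theorem~\ref{Thm: 4MR}. The only cosmetic difference is that you explicitly decompose $T^{-1}(C)$ as a disjoint union over $(a_1^*,a_2^*)\in\mathcal{T}\times\mathcal{B}$ and sum, whereas the paper computes $\mu(T^{-1}(C))=\prod_{i=1}^n \mu\{a_{i+2}=y_i\}$ directly from Theorem~\ref{Thm: 4MR}; these are the same computation.
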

\begin{proof}
The map $T$ can be expressed as follows
\begin{align*}
T : p\Z_p &\longrightarrow p\Z_p \\
 \alpha=[0,a_1,a_2, \ldots] &\longmapsto \alpha=[0,a_3,a_4, \ldots]
\end{align*}
where $[0,a_1,a_2, \ldots]$ is the continued fraction expansion of $\alpha$. 
By Proposition~\ref{prop: cilindri} and \cite[Theorem 5.7]{schilling2017measures} (Uniqueness of measures), it is sufficient to prove that $T$ preserves the measure of the cylinders. We prove that, given a cylinder $C(y_1, \ldots, y_n)$
\[\mu(C(y_1, \ldots, y_n)) = \mu(T^{-1}(C(y_1, \ldots, y_n))).\]
By Theorem~\ref{Thm: 4MR}
\[\mu(C(y_1, \ldots, y_n)) = \prod_{i=1}^{\left \lfloor \frac{n}{2} \right \rfloor} p^{-2k_{2i}} \prod_{i=1}^{\left \lfloor \frac{n-1}{2} \right \rfloor} p^{-2k_{2i+1}+1} = p^{-2 \sum\limits_{i=1}^n k_i + \left \lfloor \frac{n-1}{2} \right \rfloor},\]
where $v_p(y_i)=-k_i$ for $i=1, \ldots,n$. Moreover, by applying Theorem~\ref{Thm: 4MR} again, we have
\begin{align*}
\mu(T^{-1}(C(y_1, \ldots, y_n))) = \prod_{i=1}^n \mu \{\alpha \in p \Z_p \mid a_{i+2} = y_i \}=\prod_{i=1}^{\left \lfloor \frac{n}{2} \right \rfloor} p^{-2k_{2i}} \prod_{i=1}^{\left \lfloor \frac{n-1}{2} \right \rfloor} p^{-2k_{2i+1}+1}.   
\end{align*}
Hence, $\mu(C) = \mu(T^{-1}(C))$ for every cylinder, which concludes the proof.
\end{proof}

\begin{Theorem}\label{thm: 7}
Let $y \in \mathcal{T}$, with $v_p(y)=-k$, for $k\in\Z$. For \textit{almost all} $\alpha \in p \Z_p$, the frequency of repetition of $y$ in the odd positions of the continued fraction expansion of $\alpha$ via Algorithm~\eqref{Alg: MR} is the same, it is independent of $\alpha$, and equal to $p^{-2k+1}$.
\end{Theorem}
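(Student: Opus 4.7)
The plan is to apply Kolmogorov's strong law of large numbers to the indicator random variables that track whether $y$ appears at a given odd position, following Ruban's argument for Theorem~\ref{Thm: frequency}. On the probability space $(p\Z_p,\mu)$, for each $i \geq 0$ set
\[A_i := \{\alpha \in p\Z_p \mid a_{2i+1}(\alpha) = y\}, \qquad X_i := \mathbf{1}_{A_i},\]
so that the asymptotic frequency of $y$ at odd positions of the expansion of $\alpha$ is precisely $\lim_{N\to\infty} N^{-1}\sum_{i=0}^{N-1} X_i(\alpha)$. Theorem~\ref{Thm: 4MR}, applied at the single odd index $2i+1$, gives $\mu(A_i) = p^{-2k+1}$, so the $X_i$ are identically Bernoulli-distributed with this mean.

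The core technical step is promoting the independence statement of Theorem~\ref{Thm: 4MR} --- which is formulated for prescribing $a_j$ at all consecutive indices $j = 1,\ldots,n$ --- to mutual independence of the family $\{A_i\}_{i \geq 0}$, where the partial quotients between the selected odd positions are left free. To do so I would decompose
\[A_{i_1} \cap \cdots \cap A_{i_m} \;=\; \bigsqcup_{(z_j)} \{\alpha \mid a_j(\alpha) = z_j \text{ for } 1 \leq j \leq 2i_m+1\}\]
as a countable disjoint union over all admissible fillings $(z_j)$ with $z_{2i_\ell+1} = y$ for every $\ell$ and $z_j \in \mathcal{B}$ or $\mathcal{T}$ at the unconstrained positions (according to the parity of $j$), apply Theorem~\ref{Thm: 4MR} cylinder-by-cylinder, and factor the resulting sum. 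Each unconstrained index $j$ then contributes
\[\sum_{z} \mu\{\alpha \mid a_j(\alpha) = z\} \;=\; 1,\]
the sum ranging over the relevant $\mathcal{B}$ or $\mathcal{T}$; this total equals $1$ because the partial quotient $a_j$ is $\mu$-a.e.\ defined and its possible values account for the full measure via the geometric identity $(p-1)\sum_{k\geq 1}p^{-k} = 1$ (and its even-index analogue). What remains of the factorised sum is $\prod_{\ell=1}^m \mu(A_{i_\ell}) = (p^{-2k+1})^m$, which is mutual independence.

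With $\{X_i\}_{i \geq 0}$ now an i.i.d.\ Bernoulli$(p^{-2k+1})$ sequence on $(p\Z_p,\mu)$, Kolmogorov's strong law of large numbers immediately yields the desired $\mu$-almost-sure convergence, completing the argument. I expect the only nontrivial point to be the independence bookkeeping above; everything else is standard. A more structural alternative would be to apply Birkhoff's ergodic theorem to the shift $T$ of Proposition~\ref{Prop: MRmeas}, but establishing ergodicity of $T$ reduces to the very same cylinder-mixing calculation and so offers no substantive shortcut.
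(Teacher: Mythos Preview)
Your argument is correct and takes a genuinely different route from the paper's. The paper proves the theorem via Birkhoff's ergodic theorem applied to the shift-by-two map $T$ of Proposition~\ref{Prop: MRmeas}: with $f = \mathbf{1}_{\{a_1 = y\}}$ one has $\int_{p\Z_p} f\,d\mu = p^{-2k+1}$ by Theorem~\ref{Thm: 4MR}, and ergodicity of $T$ (asserted to follow from Theorem~\ref{Thm: 4MR}) then gives the almost-sure time average. You instead bypass the ergodic machinery entirely, extracting an i.i.d.\ Bernoulli sequence directly from the independence clause of Theorem~\ref{Thm: 4MR} and invoking Kolmogorov's SLLN.

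Two minor comments. First, your cylinder decomposition does more work than necessary: Theorem~\ref{Thm: 4MR} asserts \emph{mutual} independence of the events $\{a_j = y_j\}_{j=1}^{n}$, and mutual independence by definition includes every subcollection, so choosing $n = 2i_m+1$ and restricting to the positions $2i_1+1,\ldots,2i_m+1$ gives $\mu(A_{i_1}\cap\cdots\cap A_{i_m}) = \prod_\ell \mu(A_{i_\ell})$ immediately, without filling in and summing out the intermediate indices. Second, if you do keep the decomposition, the most robust justification for $\sum_z \mu\{a_j = z\} = 1$ at an unconstrained position is simply that these events partition $p\Z_p$ up to a null set, rather than an explicit geometric sum from the measure formula.

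Your closing remark is apt: the paper's one-line ergodicity claim conceals essentially the same cylinder-independence calculation you carry out, so your route is, if anything, more self-contained; the paper's formulation buys a reusable dynamical framework (the map $T$), while yours buys an elementary proof that avoids invoking ergodicity.
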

\begin{proof}
The technique of the proof follows similar steps as the proof of \cite[Theorem 7]{rub} for Ruban's continued fractions. Let $T: p \Z_p \to p \Z_p$ as in Proposition~\ref{Prop: MRmeas}. $T$ is measure preserving. It follows from Theorem~\ref{Thm: 4MR} that it is ergodic. Let $\alpha \in p \Z_p$ and let
\[f(\alpha) := \begin{cases}
1 &\text{if } a_1 = y \\
0 &\text{if } a_1 \ne y
\end{cases}.\]
Then, $f \in L_1(p \Z_p)$,
\[\int_{p\Z_p} f(x)\, d\mu =\mu \{\alpha \in p\Z_p \mid a_1=y \}= p^{-2k+1},\]
by Theorem~\ref{Thm: 4MR}, and the required result is obtained if we apply Birkhoff's Ergodic Theorem (\cite[Theorem 1.14]{walters2000introduction}) to the transformation $T$ and the function $f(\alpha)$. In this case, 
\[\lim\limits_{n \to \infty} \frac{1}{n} \sum_{i=0}^{n-1} f(T^i(\alpha)) = \int_{p\Z_p} f(\alpha)\, d\mu = p^{-2k+1},\]
for \textit{almost all} $\alpha \in p \Z_p$.
\end{proof}

\begin{Corollary}\label{Coro: MRvalunbounded}
Let $k \geq 1 $. For \textit{almost all} $\alpha \in \Q_p$ there exist infinitely many $i \in \mathbb{N}$ such that $v_p(a_{2i+1}) \le -k$, where $a_i$ is the $i$-th partial quotient of the Algorithm~\eqref{Alg: MR} $p$-adic continued fraction of $\alpha$.
\end{Corollary}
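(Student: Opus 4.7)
The plan is to deduce this from Theorem~\ref{thm: 7} in essentially the same way that Corollary~\ref{Coro: unbounded} was deduced from Theorem~\ref{Thm: frequency}. The first step would be to exhibit a concrete $y\in\mathcal{T}$ with $v_p(y)\le -k$. A convenient choice is $y=p^{-(k+1)}$: writing it as $c/p^n$ with $c=1$, $n=k+1$, one only needs $1<p^{k+1}/2$, which holds for every prime $p$ and every $k\ge 1$ since $p^{k+1}\ge 4$. Applying Theorem~\ref{thm: 7} to this $y$, the frequency with which $y$ appears in odd positions of the Algorithm~\eqref{Alg: MR} expansion of $\mu$-almost every $\alpha\in p\Z_p$ equals $p^{-2(k+1)+1}>0$. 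A strictly positive limiting frequency forces infinitely many occurrences, so for such $\alpha$ we have $v_p(a_{2i+1})=-(k+1)\le -k$ for infinitely many $i\in\N$.

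The second step promotes this from $p\Z_p$ to $\Q_p$ by a translation argument. Any $\alpha\in\Q_p$ decomposes as $\alpha=s(\alpha)+\alpha'$ with $s(\alpha)\in\mathcal{B}$ and $\alpha'\in p\Z_p$. Since $v_p(\alpha')\ge 1$, we have $s(\alpha')=0$, so $\alpha'_1=1/\alpha'=1/(\alpha-s(\alpha))=\alpha_1$, and by induction the partial quotients of $\alpha$ and $\alpha'$ coincide in all positions $i\ge 1$. Setting
\[U_k=\{\alpha\in p\Z_p \mid v_p(a_{2i+1})\le -k \text{ for only finitely many } i\},\]
the first step gives $\mu(U_k)=0$. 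If $V_k\subset\Q_p$ denotes the analogous set, the identification above yields $V_k=\bigcup_{b\in\mathcal{B}}(b+U_k)$, and since $\mathcal{B}\subset\Q$ is countable and $\mu$ is translation invariant, $\mu(V_k)=0$.

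There is no substantial obstacle: the result is a direct combination of the ergodic-theoretic frequency statement of Theorem~\ref{thm: 7} with the translation argument already used to pass from $p\Z_p$ to $\Q_p$ in Corollary~\ref{Coro: unbounded}. The only subtlety worth mentioning is the small case-check in the definition of $\mathcal{T}$: choosing $y$ of valuation $-(k+1)$ rather than $-k$ guarantees the bound $|c|<p^n/2$ without having to separately treat small primes.
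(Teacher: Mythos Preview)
Your argument is correct and follows exactly the route the paper intends: deduce the $p\Z_p$ statement from Theorem~\ref{thm: 7} and then extend to $\Q_p$ by the same countable-translation argument as in Corollary~\ref{Coro: unbounded}, replacing $\mathcal{R}$ by $\mathcal{B}$. The paper's own proof simply reads ``the proof follows the same steps as in Corollary~\ref{Coro: unbounded}'', so your version is a faithful (and more detailed) expansion of that.
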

\begin{proof}
The proof follows the same steps as in Corollary~\ref{Coro: unbounded}.
\end{proof}

\section{Arithmetic of $p$-adic numbers}\label{Sec: aux}
Let $\alpha=\sum\limits_{n=-r}^{+\infty}c_np^n$ be a $p$-adic number, with $v_p(\alpha)=-r$ and $c_n\in \Z/p\Z$ for all $n\ge -r$. Let
\[\lfloor\alpha\rfloor_p=\sum\limits_{n=-r}^{0}c_np^n,\]
so that $\lfloor\alpha\rfloor_p=0$ for $r<0$. 
The floor function considered is either \eqref{Eq: rubfunc} of Ruban's algorithm or \eqref{Eq: sfunc} of \textit{Browkin I} algorithm, depending on the chosen representatives. The results of the following sections hold either choosing the partial quotients inside the set $\mathcal{R}$ or the set $\mathcal{B}$. In Section \ref{Sec: otheralgo} we deal also with the partial quotients in $\mathcal{T}$. By the first $k$ \textit{digits} of $\alpha=\sum\limits_{n=-r}^{+\infty}c_np^n$ we mean the coefficients $c_n$ with $n=-r, \ldots, -r+k-1$ and $c_{-r} \ne 0$.\\

In the following sections, we aim to compute the floor function of the M\"obius transformation and the bilinear fractional transformation of $p$-adic continued fractions, using only the knowledge of their partial quotients.
In order to compute correctly the floor function of a transformation $\gamma=\sum\limits_{n=-r_\gamma}^{+\infty}l_np^n$, we must be able to compute the digits $l_{-r_{\gamma}},\ldots,l_0$. Here we list some auxiliary results for the arithmetic manipulation of the $p$-adic digits by simple operations. In the next lemma, we determine the number of digits that is possible to determine for the inverse of a $p$-adic number.

\begin{Lemma}\label{Lem: inverse}
Let $\alpha=\sum\limits_{n=-r}^{+\infty}c_np^n$, with $r\in\Z$ and $c_{-r} \ne 0$, be a nonzero $p$-adic number and let $\frac{1}{\alpha}=\sum\limits_{n=r}^{+\infty}d_np^n$ be its inverse. Then, for all $k\geq 0$, the coefficients $d_r,\ldots,d_{r+k-1}$ are uniquely determined by the coefficients $c_{-r},\ldots,c_{-r+k-1}$.
\begin{proof}
We fix $k\geq 0$. The following holds
\begin{equation}\label{Eq: inverseprod}1=\alpha\frac{1}{\alpha}=(c_{-r}p^{-r}+c_{-r+1}p^{-r+1}+\cdots)(d_{r}p^r+d_{r+1}p^{r+1}+\cdots).\end{equation}
In order to compute the $k$ coefficients $d_r,\ldots,d_{r+k-1}$ of $\frac{1}{\alpha}$, we solve the linear system
\begin{align}\label{Eq: dsystem}
\begin{cases}
c_{-r}d_r=1\\
c_{-r}d_{r+1}+c_{-r+1}d_r=0\\
c_{-r}d_{r+2}+c_{-r+1}d_{r+1}+c_{-r+2}d_{r}=0\\
\quad \vdots \\
\sum\limits_{i=0}^{k-1} c_{-r+i}d_{r+k-1-i}=0,
\end{cases}
\end{align}
where the equations are in $\F_p$ and describe the digits of the terms $1,p,\ldots,p^{k-1}$ of the product \eqref{Eq: inverseprod}. Let us consider the $k\times k$ matrix of the coefficients

\[C=\begin{pmatrix}
\ c_{-r} &   0 & 0 & \ldots &  0 \  \\
\ c_{-r+1} & c_{-r} & 0  & \ldots & 0 \  \\
\ c_{-r+2} & c_{-r+1} & c_{-r} & \ddots  & \vdots  \  \\
\ \vdots & \vdots & \ddots & \ddots & 0  \ \\
\ c_{-r+k-1} & c_{-r+k-2} & \ldots & c_{-r+1} & c_{-r} \ \\
\end{pmatrix}.\]
Notice that $\det(C)\neq 0$ as $c_{-r}$ is nonzero modulo $p$. Therefore, there is a unique solution $(d_{r},\ldots,d_{r+k-1})$ to \eqref{Eq: dsystem}, determined by $c_{-r},\ldots,c_{-r+k-1}$, and the thesis follows.
\end{proof}
\end{Lemma}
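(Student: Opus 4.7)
The statement says that the first $k$ digits of $1/\alpha$ are determined by the first $k$ digits of $\alpha$. My plan is to prove this via a truncation argument based on $p$-adic valuations, which is clean and entirely avoids the bookkeeping of carries in digit-by-digit multiplication.

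Concretely, I would introduce the truncation $\alpha^{(k)} := \sum_{n=-r}^{-r+k-1} c_n p^n$, which by construction depends only on the digits $c_{-r},\ldots,c_{-r+k-1}$ and satisfies $v_p(\alpha - \alpha^{(k)}) \ge -r+k$. Since $c_{-r} \ne 0$, both $\alpha$ and $\alpha^{(k)}$ have $p$-adic valuation exactly $-r$. From the identity
\[
\frac{1}{\alpha} - \frac{1}{\alpha^{(k)}} \;=\; \frac{\alpha^{(k)} - \alpha}{\alpha \cdot \alpha^{(k)}},
\]
I would read off $v_p\bigl(1/\alpha - 1/\alpha^{(k)}\bigr) \ge (-r+k) - 2(-r) = k+r$. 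Because $v_p(1/\alpha) = r$, this forces $1/\alpha$ and $1/\alpha^{(k)}$ to agree in their first $k$ digits, i.e.\ in the coefficients of $p^r,\ldots,p^{r+k-1}$. Since $1/\alpha^{(k)}$ is entirely determined by $\alpha^{(k)}$, and hence by $c_{-r},\ldots,c_{-r+k-1}$, the conclusion follows.

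A more computational alternative is to expand the Cauchy product $\alpha\cdot(1/\alpha) = 1$ and equate the coefficients of $p^0,p^1,\ldots,p^{k-1}$. This produces a lower triangular $k \times k$ linear system in the unknowns $d_r,\ldots,d_{r+k-1}$ with diagonal entries equal to $c_{-r}$; since $c_{-r}$ is nonzero modulo $p$, the system has a unique solution determined by $c_{-r},\ldots,c_{-r+k-1}$. The main subtlety in this approach is that reducing the convolution equations modulo $p$ requires tracking the carries propagating from lower-order terms, but these carries depend only on digits already computed, so the triangular structure (and the invertibility argument) is preserved. I would use the valuation-based proof as the main argument in order to sidestep this bookkeeping, and mention the triangular system as a constructive alternative that is useful for explicit digit-by-digit computation in the algorithmic sections that follow.
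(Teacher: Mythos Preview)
Your valuation-based argument is correct and takes a genuinely different route from the paper. The paper proceeds exactly along your ``computational alternative'': it expands the Cauchy product $\alpha\cdot(1/\alpha)=1$, equates the coefficients of $p^0,\ldots,p^{k-1}$ to obtain a lower-triangular $k\times k$ system over $\F_p$ with $c_{-r}$ on the diagonal, and concludes from $c_{-r}\not\equiv 0\pmod p$ that the system has a unique solution. Your main argument via the truncation $\alpha^{(k)}$ and the identity $1/\alpha - 1/\alpha^{(k)} = (\alpha^{(k)}-\alpha)/(\alpha\,\alpha^{(k)})$ is cleaner precisely because it sidesteps the carry bookkeeping you flag --- a subtlety the paper's proof glosses over (its equations ``in $\F_p$'' are not literally the digits of the product without accounting for carries, though as you observe the fix is routine since each carry depends only on lower-order data already determined). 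What the paper's approach buys is an explicit recursion for the $d_j$ in terms of the $c_i$, which is constructive and matches the algorithmic flavour of the surrounding section; your approach buys conceptual transparency and works verbatim in any complete non-archimedean field.
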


In the next two results, we compute the number of digits that we are able to determine for, respectively, the product and the sum of two $p$-adic numbers.

\begin{Lemma}\label{Lem: prodab}
Let us consider $\alpha=\sum\limits_{n=-r_\alpha}^{+\infty}c_np^n$ and $\beta=\sum\limits_{n=-r_\beta}^{+\infty}d_np^n$. Suppose we know the first $h_\alpha$ digits $c_{-r_\alpha},\ldots,c_{-r_\alpha+h_\alpha-1}$ of $\alpha$ and the first $h_\beta$ digits $d_{-r_\beta},\ldots,d_{-r_\beta+h_\beta-1}$ of $\beta$. Then, it is possible to uniquely determine the first $k=\min\{h_\alpha,h_\beta\}$ digits of 
\[\alpha\beta=\sum\limits_{n=-r_\alpha-r_\beta}^{+\infty}e_np^n,\] but not the $(k+1)$-th digit.
\begin{proof}
Notice that, for all $t\geq0$, the digits of $\alpha\beta$ are
\begin{equation}\label{Eq: et}
e_{-r_\alpha-r_\beta+t}=\sum\limits_{i=0}^{t} c_{-r_\alpha+i}d_{-r_\beta+t-i},
\end{equation}
where, for $t=0,\ldots,k-1$, all the coefficients involved are known by hypothesis. For $t=k$, either $c_{-r_\alpha+h_\alpha}$ or $d_{-r_\beta+h_\beta}$ appears in \eqref{Eq: et}, so it is not possible to determine $e_{-r_\alpha-r_\beta+k}$.
\end{proof}
\end{Lemma}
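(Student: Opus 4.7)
The plan is to decompose each factor into a known prefix and an unknown tail, and then control the $p$-adic valuation of the resulting cross terms.

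First I would write $\alpha = A + p^{-r_\alpha+h_\alpha}\tilde\alpha$, where $A = \sum_{n=-r_\alpha}^{-r_\alpha+h_\alpha-1} c_n p^n$ is the known prefix of $\alpha$ and $\tilde\alpha\in\Z_p$ encodes the unseen digits, and similarly $\beta = B + p^{-r_\beta+h_\beta}\tilde\beta$. Expanding the product gives
\[
\alpha\beta \;=\; AB \;+\; p^{-r_\beta+h_\beta}A\tilde\beta \;+\; p^{-r_\alpha+h_\alpha}\tilde\alpha B \;+\; p^{-r_\alpha-r_\beta+h_\alpha+h_\beta}\tilde\alpha\tilde\beta.
\]

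Using the bounds $v_p(A)\geq -r_\alpha$, $v_p(B)\geq -r_\beta$ and $v_p(\tilde\alpha),v_p(\tilde\beta)\geq 0$, each of the three cross terms has $p$-adic valuation at least $-r_\alpha-r_\beta+\min(h_\alpha,h_\beta)=-r_\alpha-r_\beta+k$. Hence
\[
\alpha\beta \;\equiv\; AB \pmod{p^{-r_\alpha-r_\beta+k}},
\]
and since $AB$ depends only on the known data, its first $k$ digits yield the first $k$ digits of $\alpha\beta$. This settles the positive half of the claim.

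For the sharpness statement, assume without loss of generality that $h_\alpha = k \leq h_\beta$. The cross term $p^{-r_\alpha+h_\alpha}\tilde\alpha B$ has valuation \emph{exactly} $-r_\alpha-r_\beta+k$ whenever $\tilde\alpha$ is a unit in $\Z_p$, because $v_p(B)=-r_\beta$ (the leading digit $d_{-r_\beta}$ is nonzero by hypothesis). Fixing $\tilde\beta$ and letting $\tilde\alpha$ range over units, the coefficient of $p^{-r_\alpha-r_\beta+k}$ contributed by this term sweeps through $\F_p^\times\cdot d_{-r_\beta}=\F_p^\times$, while the remaining unknown contribution either has strictly higher valuation (when $h_\beta>h_\alpha$) or remains fixed (when $h_\beta=h_\alpha$, since $\tilde\beta$ is held fixed). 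Producing two such choices of $\tilde\alpha$ that give different digits of $\alpha\beta$ at position $-r_\alpha-r_\beta+k$ completes the proof.

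The main technical point is the optimality direction: one has to rule out an accidental cancellation between the three cross terms at position $-r_\alpha-r_\beta+k$. Isolating a single unknown (here $\tilde\alpha$) while freezing the other and exploiting the $\F_p$-linear dependence of the critical term on $\tilde\alpha\bmod p$ is what makes the non-cancellation transparent; all other contributions to that position are either of strictly higher valuation or constant in $\tilde\alpha$.
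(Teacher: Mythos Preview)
Your proof is correct. The paper's argument is a terse digit-by-digit computation: it writes the product digits via the convolution $e_{-r_\alpha-r_\beta+t}=\sum_{i=0}^{t} c_{-r_\alpha+i}d_{-r_\beta+t-i}$ and observes which input coefficients occur for each $t$. Your route is different in presentation: rather than tracking individual digits, you split off the unknown tails and work with $p$-adic valuations of the cross terms. This buys you two things. First, carries are handled automatically by the congruence $\alpha\beta\equiv AB\pmod{p^{-r_\alpha-r_\beta+k}}$, whereas the paper's convolution formula is, taken literally, only correct before reduction modulo $p$ and propagation of carries. Second, your sharpness argument is more explicit: the paper simply says an unknown coefficient ``appears'' in the formula for $e_{-r_\alpha-r_\beta+k}$, while you actually exhibit two choices of $\tilde\alpha$ producing distinct digits at that position, by isolating the $\F_p$-linear dependence on $\tilde\alpha\bmod p$ and checking that all other contributions are either constant in $\tilde\alpha$ or land in strictly higher valuation. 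The two arguments are morally the same multiplication, but yours is the more careful write-up.
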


\begin{Lemma}\label{Lem: sumab}
Let us consider $\alpha=\sum\limits_{n=-r_\alpha}^{+\infty}c_np^n$ and $\beta=\sum\limits_{n=-r_\beta}^{+\infty}d_np^n$. Let us suppose to know the first $h_\alpha$ digits $c_{-r_\alpha},\ldots,c_{-r_\alpha+h_\alpha-1}$ of $\alpha$ and the first $h_\beta$ digits $d_{-r_\beta},\ldots,d_{-r_\beta+h_\beta-1}$ of $\beta$. Then it is possible to uniquely determine $k$ digits, but not $k+1$, of
\[\alpha+\beta=\sum\limits_{n=-r}^{+\infty}e_np^n,\] where $-r = v_p(\alpha + \beta)$ and
\[k= \min \{ -r_\alpha + h_\alpha
, -r_\beta + h_\beta  \} - v_p\left(\sum\limits_{n=-r_\alpha}^{-r_\alpha+h_\alpha-1}c_np^n + \sum\limits_{n=-r_\beta}^{-r_\beta+h_\beta-1}d_np^n\right).\]
If $k\leq 0$, we are not able to determine any $p$-adic digit of $\alpha+\beta$.
\begin{proof}
Let us consider, without losing generality, that $ -r_\alpha + h_\alpha - 1 \le -r_\beta + h_\beta - 1$. If $v_p(\alpha + \beta) \leq -r_\alpha + h_\alpha - 1$, then we know the digits of $\alpha$ and $\beta$ relative to the powers $p^i$ for $i \le -r_\alpha + h_\alpha - 1$, but we do not know $c_{-r_\alpha + h_\alpha}$. Hence, we can determine the digits of $\alpha + \beta$ relative to the powers $p^i$ with $i= v_p(\alpha + \beta), \ldots, -r_\alpha + h_\alpha - 1$. This is a total of
\[k=\min \{ -r_\alpha + h_\alpha
-1, -r_\beta + h_\beta -1 \}-v_p(\alpha+\beta)+1,\]
digits, and the thesis follows for this case.
If $v_p(\alpha + \beta) > -r_\alpha + h_\alpha - 1$, then we are not able to determine any digit of $\alpha + \beta$.
\end{proof}
\end{Lemma}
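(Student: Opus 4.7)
The plan is to write each of $\alpha$ and $\beta$ as a known truncation plus an unknown tail, and then track exactly how many digits of the sum survive this decomposition. Set
\[
\tilde{\alpha} := \sum_{n=-r_\alpha}^{-r_\alpha+h_\alpha-1} c_n p^n, \qquad \tilde{\beta} := \sum_{n=-r_\beta}^{-r_\beta+h_\beta-1} d_n p^n,
\]
so that $\alpha = \tilde{\alpha} + \varepsilon_\alpha$ and $\beta = \tilde{\beta} + \varepsilon_\beta$ with $v_p(\varepsilon_\alpha) \geq -r_\alpha + h_\alpha$ and $v_p(\varepsilon_\beta) \geq -r_\beta + h_\beta$. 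With $M := \min\{-r_\alpha + h_\alpha,\, -r_\beta + h_\beta\}$, the ultrametric inequality yields $v_p(\varepsilon_\alpha + \varepsilon_\beta) \geq M$, hence
\[
\alpha + \beta = (\tilde{\alpha} + \tilde{\beta}) + \eta, \qquad v_p(\eta) \geq M.
\]

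I would then split into two cases depending on how $v_p(\tilde{\alpha} + \tilde{\beta})$ compares with $M$. In the \emph{good case} $v_p(\tilde{\alpha} + \tilde{\beta}) < M$, the strong triangle inequality forces $v_p(\alpha + \beta) = v_p(\tilde{\alpha} + \tilde{\beta}) = -r$; since the digits of $\eta$ at positions below $M$ all vanish and $p$-adic carries propagate only upward in valuation, the coefficients of $p^i$ in $\alpha + \beta$ coincide with those of $\tilde{\alpha} + \tilde{\beta}$ for every $-r \leq i \leq M - 1$. This produces exactly $k = M - (-r) = M - v_p(\tilde{\alpha} + \tilde{\beta})$ determined digits, matching the claim. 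In the \emph{bad case} $v_p(\tilde{\alpha} + \tilde{\beta}) \geq M$, the quantity $k$ is nonpositive, and varying $\eta$ inside the set of admissible tails can either reinforce or cancel the leading contribution of $\tilde{\alpha} + \tilde{\beta}$, so not even $v_p(\alpha + \beta)$ can be pinned down, let alone a single digit.

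Finally, to rule out the $(k+1)$-th digit in the good case, I would produce two completions of the hidden data giving different coefficients at position $M$. Concretely, assuming without loss of generality that the minimum in $M$ is attained by $-r_\alpha + h_\alpha$, the unknown digit $c_{-r_\alpha+h_\alpha}$ of $\alpha$ may be freely chosen among its $p$ admissible residues, and each choice leaves $\tilde{\alpha}+\tilde{\beta}$ untouched while shifting the position-$M$ digit of $\alpha + \beta$ by the corresponding amount. The only delicate point in the argument is keeping the two valuation regimes cleanly separated, and verifying that the index count from $-r$ up to $M-1$ gives precisely $k$ rather than $k\pm 1$; everything else is a direct consequence of the ``no downward carries'' principle in $p$-adic arithmetic.
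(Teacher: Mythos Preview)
Your proof is correct and follows essentially the same approach as the paper: decompose $\alpha+\beta$ into the known truncation sum $\tilde{\alpha}+\tilde{\beta}$ plus an unknown tail of valuation at least $M$, then count the digits that survive. Your write-up is in fact more complete than the paper's, which uses $v_p(\alpha+\beta)$ directly in the count (implicitly assuming the good case) and does not spell out the sharpness argument for ``but not $k+1$''; your explicit construction varying the first unknown digit $c_{-r_\alpha+h_\alpha}$ fills that gap cleanly.
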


\section{M\"obius transformation of $p$-adic continued fractions}\label{Sec: Mobpadic}
In this section, we study how to compute Ruban's and \textit{Browkin I} $p$-adic continued fraction expansion of the M\"obius transformation of $\alpha\in\Q_p$, given by
\[\gamma=\frac{x\alpha+y}{z\alpha+t},\]
with $x,y,z,t\in\Z$. Note that if
\[\det\begin{pmatrix}
    x & y \\
    z & t
\end{pmatrix}=xt-yz=0,\]
then $\gamma$ is a rational number, and we know how to compute its continued fraction. Therefore, we assume in the following that $xt-yz\neq0$. First of all, we analyze the conditions under which the $p$-adic floor function of such a transformation can be determined. In particular, we identify when all the coefficients of $\left\lfloor\frac{x\alpha+y}{z\alpha+t}\right\rfloor_p$ can be recovered from the knowledge of a finite number of partial quotients $a_i$ of the $p$-adic continued fraction of $\alpha$. The main result of this analysis is presented in Theorem~\ref{thmfloor}. Before stating it, we compute how many $p$-adic digits of $x\alpha + y$ can be determined from the knowledge of a given number of $p$-adic digits of $\alpha$.

\begin{Remark}\label{Rem: pathological} In general, $v_p(x\alpha+y)$ can be greater than $v_p(x\lfloor\alpha\rfloor_p+y)$. For example, let us consider,
\[\alpha=1+4\cdot5+5^2+\cdots\in\Q_5,\]
with $x=1$ and $y=4$. Then,
\[v_5(x\lfloor\alpha\rfloor_5+y)=v_5(1+4)=1,\]
but
\[v_5(x\alpha+y)=v_5(2\cdot5^2+\cdots)=2,\]
hence the valuation of $x\alpha+y$ cannot be, in general, determined by the valuation of $x\lfloor\alpha\rfloor_5+y$. Note that this can occur only when $v_p(x\lfloor\alpha\rfloor_p+y) \ge v_p(x) + 1$.
\end{Remark}

\begin{Proposition}\label{Pro: digitsum}
Let $x,y\in\Q$ and let $\alpha=\sum\limits_{n=-r}^{+\infty}c_np^n$. Let us suppose to know the first $h$ digits $c_{-r},\ldots,c_{-r+h-1}$ of $\alpha$. Then we can uniquely determine the first
\[k=v_p(x)-r+h-v_p\left(x\sum\limits_{n=-r}^{-r+h-1}c_np^n+y\right)\]
digits of $x\alpha+y=\sum\limits_{n=v_p(x\alpha+y)}^{+\infty}d_np^n.$
When $k\leq0$, that is, when 
\[v_p(x)-r+h\leq v_p\left(x\sum\limits_{n=-r}^{-r+h-1}c_np^n+y\right),\]  then no $p$-adic digit of $x\alpha+y$ can be determined from the knowledge of $h$ digits of $\alpha$.
\begin{proof}
By hypothesis, we know all digits of $x$ and $y$, and so, by Lemma~\ref{Lem: prodab}, we know the first $h$ digits of $x \alpha$. The thesis follows applying Lemma~\ref{Lem: sumab} with $x \alpha$ and $y$. In particular, in the notation of Lemma~\ref{Lem: sumab}, since we know all digits of $y$, $h_y= \infty$, and $h_{x \alpha} = h$, so that
\[\begin{aligned}k &= \min \{ v_p(x \alpha) + h , v_p(y) + \infty  \} -v_p\left(x\sum\limits_{n=-r}^{-r+h-1}c_np^n+y\right) \\
&= v_p(x) -r + h  -v_p\left(x\sum\limits_{n=-r}^{-r+h-1}c_np^n+y\right).
\end{aligned}\]
This proves the claim.
\end{proof}
\end{Proposition}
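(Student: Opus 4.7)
The plan is to combine the previous two lemmas in a direct, two-step fashion: first handle the product $x\alpha$, then handle the sum $x\alpha + y$. Since $x, y \in \mathbb{Q}$, all their $p$-adic digits are effectively known, which lets me treat them as having ``infinite'' precision when I plug them into Lemma~\ref{Lem: prodab} and Lemma~\ref{Lem: sumab}.

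First I would apply Lemma~\ref{Lem: prodab} to the product $x\cdot\alpha$. With $h_x = \infty$ (all digits of $x$ known) and $h_\alpha = h$, the lemma gives $\min\{\infty, h\} = h$ determinable digits of $x\alpha$, starting from position $v_p(x\alpha) = v_p(x) - r$. So the known portion of $x\alpha$ covers exponents up to $v_p(x) - r + h - 1$, and it coincides with $x\sum_{n=-r}^{-r+h-1} c_n p^n$. Next I would apply Lemma~\ref{Lem: sumab} to the pair $(x\alpha, y)$, taking $h_{x\alpha} = h$, $h_y = \infty$. The lemma's formula produces
\[
k \;=\; \min\bigl\{v_p(x) - r + h,\; \infty\bigr\} - v_p\!\left(x\sum_{n=-r}^{-r+h-1} c_n p^n + y\right) \;=\; v_p(x) - r + h - v_p\!\left(x\sum_{n=-r}^{-r+h-1} c_n p^n + y\right),
\]
which is exactly the expression in the statement. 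The degenerate case $k \leq 0$ follows from the same lemma's clause: if the valuation of the known truncated sum exceeds the rightmost known exponent, cancellations could push $v_p(x\alpha + y)$ arbitrarily high, and no digit is forced.

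The main subtlety to address carefully -- and the only real obstacle -- is the phenomenon flagged in Remark~\ref{Rem: pathological}: the true valuation $v_p(x\alpha + y)$ may strictly exceed $v_p\!\left(x\sum_{n=-r}^{-r+h-1} c_n p^n + y\right)$ because unknown tails of $x\alpha$ can conspire with $y$ to cancel additional $p$-adic digits. This is why the formula for $k$ is written in terms of the valuation of the \emph{known truncation} rather than of $x\alpha + y$ itself. I would make sure the invocation of Lemma~\ref{Lem: sumab} is consistent with this point, namely that it is precisely the valuation of the partial sum of \emph{known} digits that controls how many digits of $x\alpha + y$ one can read off, independent of what happens in the unknown tail. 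Once this is observed, no further calculation is needed; the proof reduces to a clean two-line chaining of the two arithmetic lemmas.
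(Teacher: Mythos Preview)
Your proposal is correct and follows essentially the same approach as the paper's proof: apply Lemma~\ref{Lem: prodab} to obtain $h$ known digits of $x\alpha$, then feed this into Lemma~\ref{Lem: sumab} with $h_y=\infty$ to read off the claimed formula for $k$. Your additional commentary on why the formula uses the valuation of the truncated sum rather than of $x\alpha+y$ itself is a helpful clarification, but the core argument matches the paper's almost line for line.
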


\begin{Remark}\label{Rem: riporto}
In general, knowing $k$ digits of $\alpha$ is not sufficient for computing any digit of $x\alpha+y$. This occurs when 
\[v_p(x\alpha)=v_p(y) \quad  \text{and}\quad  v_p(x\alpha+y)\geq v_p(x\alpha)+k.\]
Indeed, in such a case, some of the unknown digits of $\alpha$ are necessary to determine the first digit of $x\alpha + y$. For example, let $p=5$, $(x,y)=(5,8)$, and suppose we know the first three digits of the $5$-adic expansion of
\[\alpha = \frac{2}{5} + 3 + 4 \cdot 5 + \delta,\]
where $\delta \ne 0$ is a $5$-adic number with $v_5(\delta) \ge 2$. In this case, $v_5(\alpha) = -1$ and
\[x\alpha + y = 5^3 + 5 \cdot \delta.\]
Since $v_5(5 \cdot \delta) \ge 3$ and the digits of $5 \cdot \delta$ are unknown, the $5$-adic digits of $x\alpha + y$ cannot be computed.
\end{Remark}

\begin{Remark}\label{Rem: mostpato}
By Proposition~\ref{Pro: digitsum}, the case when
\[v_p(x\lfloor\alpha\rfloor_p+y) \ge v_p(x) + 1,\]
is the most pathological. In fact, in this case, we are not able to determine any digit of the $p$-adic series of $x\alpha+y$. 
Thus, when this occurs for either the numerator $x\alpha+y$ or the denominator $z\alpha+t$, the knowledge of \(\lfloor \alpha \rfloor_p\) is not sufficient to compute any digit of
\[
\gamma = \frac{x\alpha + y}{z\alpha + t},
\]
and, in particular, the floor function \(\lfloor \gamma \rfloor_p\) cannot be obtained from it.
This is the most undesired situation, that we are going to handle in Section~\ref{Sec: MobInp}.

\end{Remark}

In the following remark, we note that we can assume, without loss of generality, $v_p(\alpha) \le 0$ when computing the $p$-adic floor function of $\frac{x\alpha+y}{z\alpha+t}$.

\begin{Remark}
If $v_p(\alpha)\geq1$, then $\lfloor\alpha\rfloor_p=0$ and we do not know any of its digits. However, after an input transformation, we obtain
\[\frac{(x\lfloor\alpha\rfloor_p+y)\alpha_1+x}{(z\lfloor\alpha\rfloor_p+t)\alpha_1+z}=\frac{y\alpha_1+x}{t\alpha_1+z},\]
where now
\[v_p(\alpha_1)=v_p\left(\frac{1}{\alpha}\right)<0.\]
Therefore, the hypothesis $v_p(\alpha)\leq 0$ in the forthcoming Theorem~\ref{thmfloor} is not a restriction.
\end{Remark}

The following theorem provides the necessary and sufficient conditions for the determination of the floor function
\[\lfloor\gamma\rfloor_p=\left\lfloor\frac{x\alpha+y}{z\alpha+t}\right\rfloor_p,\]
given the knowledge of $x,y,z,t\in\Q$ and $\lfloor\alpha\rfloor_p$.

\begin{Theorem}\label{thmfloor}
Let $\alpha\in\Q_p$ and $x,y,z,t\in\Q$, with $xt - yz \ne 0$. Let $v_p(\alpha)=-r\leq0$ and suppose that $\lfloor\alpha\rfloor_p=\sum\limits_{n=-r}^{0}c_np^n$ is known. 
If $\lfloor\alpha\rfloor_p=\alpha$, then we are always able to compute  $\left\lfloor\frac{x\alpha+y}{z\alpha+t}\right\rfloor_p$. Suppose $\lfloor\alpha\rfloor_p \ne \alpha$.
If either \[v_p(x\lfloor\alpha\rfloor_p+y)\geq v_p(x)+1,\] 
or
\[v_p(z\lfloor\alpha\rfloor_p+t)\geq v_p(z)+1,\]
then it is not possible to determine $\left\lfloor\frac{x\alpha+y}{z\alpha+t}\right\rfloor_p$. In the other cases, let us denote
\begin{equation*}
\begin{aligned}
v^{(1)}&=v_p(x\lfloor\alpha\rfloor_p+y),\\
v^{(2)}&=v_p(z\lfloor\alpha\rfloor_p+t),
\end{aligned}
\end{equation*}
and let
\[m=\min\{v_p(x)-v^{(1)},v_p(z)-v^{(2)}\}.\]
Then, the $p$-adic floor function $\left\lfloor\frac{x\alpha+y}{z\alpha+t}\right\rfloor_p$ is uniquely determined by $\lfloor\alpha\rfloor_p$ if and only if $m\geq v^{(2)}-v^{(1)}$. In particular, in this case,
\[\left\lfloor\frac{x\alpha+y}{z\alpha+t}\right\rfloor_p=\left\lfloor\frac{x \lfloor \alpha\rfloor_p+y}{z \lfloor \alpha\rfloor_p+t}\right\rfloor_p.\]

\begin{proof}
If $\lfloor\alpha\rfloor_p=\alpha$, then we know exactly $\frac{x\alpha+y}{z\alpha+t}$ and so we can determine $\left\lfloor\frac{x\alpha+y}{z\alpha+t}\right\rfloor_p$.
Suppose now that $\alpha \ne \lfloor \alpha \rfloor_p$. By hypothesis, we know $\lfloor\alpha\rfloor_p=\sum\limits_{n=-r}^{0}c_np^n$, so that we are able to compute the first $r+1$ digits of the $p$-adic expansion of $\alpha$. If either $v_p(x\lfloor\alpha\rfloor_p+y)\geq v_p(x)+1$ or $v_p(z\lfloor\alpha\rfloor_p+t)\geq v_p(z)+1$, then, by Proposition~\ref{Pro: digitsum}, no digits of either $x\alpha+y$ or $z\alpha+t$, can be computed; hence, it is not possible to determine $\left\lfloor\frac{x\alpha+y}{z\alpha+t}\right\rfloor_p$. If this is not the case, that is if
\begin{align*}
v_p(x\lfloor\alpha\rfloor_p+y)&\leq v_p(x),\\
v_p(z\lfloor\alpha\rfloor_p+t)&\leq v_p(z),
\end{align*}
then, by Remark~\ref{Rem: pathological}, we have:
\begin{align*}
    v_p(x\alpha+y)&=v_p(x\lfloor\alpha\rfloor_p+y),\\
    v_p(z\alpha+t)&=v_p(z\lfloor\alpha\rfloor_p+t).
\end{align*}
By Proposition~\ref{Pro: digitsum}, we are able to compute
\[k_1=v_p(x)-v_p(x\alpha+y)+1\]
digits of the $p$-adic expansion of $x\alpha+y$, and
\[k_2=v_p(z)-v_p(z\alpha+t)+1\]
digits of the $p$-adic expansion of $z\alpha+t$.
Using Lemma~\ref{Lem: inverse}, we can compute also the first $k_2$ digits of $\frac{1}{z\alpha+t}$.
Notice that, in any case, we know $x\alpha+y$ up to the digit of $p^{v_p(x)}$ and $z\alpha+t$ up to the digit of $p^{v_p(z)}$.
Hence,
\[x\alpha+y=\sum\limits_{n=v^{(1)}}^{+\infty}l_np^n, \quad \frac{1}{z\alpha+t}=\sum\limits_{n=-v^{(2)}}^{+\infty}m_np^n,\]
for which the first $v_p(x) - v^{(1)} + 1$ and $v_p(z) - v^{(2)} + 1$ digits can be computed, respectively.
Therefore, we can write
\[\frac{x\alpha+y}{z\alpha+t}=\left(\sum\limits_{n=v^{(1)}}^{+\infty}l_np^n\right)\left(\sum\limits_{n=-v^{(2)}}^{+\infty}m_np^n\right)=\sum\limits_{n=v^{(1)}-v^{(2)}}^{+\infty}s_np^n,\]
and by Lemma~\ref{Lem: prodab}, we know the first $m+1$ digits $s_{v^{(1)}-v^{(2)}},\ldots,s_{v^{(1)}-v^{(2)}+m}$, with
\[m=\min\{v_p(x)-v^{(1)},v_p(z)-v^{(2)}\}.\]
It turns out that we can compute
\[\left\lfloor\frac{x\alpha+y}{z\alpha+t}\right\rfloor_p=\sum\limits_{n=v^{(1)}-v^{(2)}}^{0}s_np^n,\]
if and only if $v^{(1)}-v^{(2)}+m\geq 0$, i.e. if and only if 
\begin{equation*}m\geq v^{(2)}-v^{(1)}.
\end{equation*}
In this case, all the computations involve only the first $r+1$ digits of $\alpha$, i.e. the digits of its floor function $\lfloor\alpha\rfloor_p=\sum\limits_{n=-r}^{0}c_np^n$, that is known. Therefore,
\[\left\lfloor\frac{x\alpha+y}{z\alpha+t}\right\rfloor_p=\left\lfloor\frac{x \lfloor \alpha\rfloor_p+y}{z \lfloor \alpha\rfloor_p+t}\right\rfloor_p,\]
and the statement is proved.
\end{proof}
\end{Theorem}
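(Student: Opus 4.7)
The plan is to treat the three cases in the statement separately, using the digit-counting lemmas of Section~\ref{Sec: aux} as the main tools. The overall strategy: from the known floor $\lfloor\alpha\rfloor_p = \sum_{n=-r}^{0} c_n p^n$ we have access to exactly $h=r+1$ digits of $\alpha$, and we must track how this information propagates through the affine maps $\alpha \mapsto x\alpha+y$ and $\alpha \mapsto z\alpha+t$, through $p$-adic inversion of the denominator, and through the final product, to see whether the resulting known digits of $\gamma$ cover all positions from $v_p(\gamma)$ up to position $0$.

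The case $\lfloor\alpha\rfloor_p=\alpha$ is trivial, since then $\alpha$ and hence $\gamma$ are known exactly. For the pathological cases, plugging $h=r+1$ into Proposition~\ref{Pro: digitsum} gives that $k=v_p(x)+1-v_p(x\lfloor\alpha\rfloor_p+y)$ digits of $x\alpha+y$ can be determined, so $k\le 0$ exactly when $v_p(x\lfloor\alpha\rfloor_p+y)\ge v_p(x)+1$; the analogous statement holds for $z\alpha+t$. In either situation no digit of the corresponding side of the fraction can be read off, and $\lfloor\gamma\rfloor_p$ is out of reach.

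In the main regime, where both $v_p(x\lfloor\alpha\rfloor_p+y)\le v_p(x)$ and $v_p(z\lfloor\alpha\rfloor_p+t)\le v_p(z)$, Remark~\ref{Rem: pathological} forces $v_p(x\alpha+y)=v^{(1)}$ and $v_p(z\alpha+t)=v^{(2)}$, so the valuation of $\gamma$ is pinned down as $v^{(1)}-v^{(2)}$. Proposition~\ref{Pro: digitsum} then yields exactly $k_1=v_p(x)-v^{(1)}+1$ known digits of the numerator and $k_2=v_p(z)-v^{(2)}+1$ of the denominator; Lemma~\ref{Lem: inverse} transfers the $k_2$ digits of $z\alpha+t$ to its inverse; and Lemma~\ref{Lem: prodab} applied to the product $(x\alpha+y)\cdot(z\alpha+t)^{-1}$ produces precisely $\min\{k_1,k_2\}=m+1$ consecutive digits of $\gamma$ starting from position $v^{(1)}-v^{(2)}$. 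The floor is captured iff this window reaches position $0$, which is exactly the inequality $m\ge v^{(2)}-v^{(1)}$. Since every computation that entered the argument used only the known digits of $\lfloor\alpha\rfloor_p$, substituting $\lfloor\alpha\rfloor_p$ for $\alpha$ in the formula yields the same floor, giving the final identity.

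The step I expect to require the most care is the identification of the valuations of $x\alpha+y$ and $z\alpha+t$ with those of their truncated counterparts via Remark~\ref{Rem: pathological}; this is the pivotal moment where the non-pathological hypothesis is really used, and without it $v_p(\gamma)$ itself becomes ambiguous, making it impossible to decide which of the computed product digits actually lie below position $0$ and which contribute to $\lfloor\gamma\rfloor_p$. Everything else is careful bookkeeping with the three digit-counting lemmas.
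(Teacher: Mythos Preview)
Your proposal is correct and follows essentially the same route as the paper's own proof: both handle the trivial and pathological cases via Proposition~\ref{Pro: digitsum}, then in the main regime invoke Remark~\ref{Rem: pathological} to pin down $v^{(1)},v^{(2)}$, apply Proposition~\ref{Pro: digitsum}, Lemma~\ref{Lem: inverse}, and Lemma~\ref{Lem: prodab} in sequence to obtain $m+1$ known digits of $\gamma$, and conclude that the floor is determined iff $v^{(1)}-v^{(2)}+m\ge 0$. Your emphasis on Remark~\ref{Rem: pathological} as the pivotal step is apt and matches the paper's reliance on it.
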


When the conditions of Theorem~\ref{thmfloor} are not initially satisfied and the $p$-adic floor function cannot be computed directly, the desired conditions may sometimes be achieved by performing the input transformation using further partial quotients of $\alpha$. This approach is described in the next section, where we analyze the behavior of $\frac{x\alpha+y}{z\alpha+t}$ and its floor function after the input transformations.

\subsection{Input transformation}\label{Sec: MobInp}
In the spirit of classical Gosper's algorithm presented in Section~\ref{Sec: Mobius}, we would like to use the partial quotients of $\alpha$ by performing input transformations until the hypotheses of Theorem~\ref{thmfloor} are satisfied, hence allowing to compute the partial quotient of the transformation.
In this section, we analyze how using more partial quotients of $\alpha$ can help in computing the floor function of a M\"obius transformation. However, we are going to see that, unlike the classical Gosper's algorithm in $\mathbb{R}$, it is not guaranteed in general that the hypotheses of Theorem~\ref{thmfloor} are eventually satisfied. This means that there exist continued fractions $\alpha=[a_0,a_1,\ldots]$ and $x,y,z,t\in\Q$ for which it is not possible to determine the partial quotients of the continued fraction expansion of 
\[\gamma=\frac{x\alpha+y}{z\alpha+t},\]
using only a finite number of partial quotients of $\alpha$.
However, using the result from measure theory of Section~\ref{Sec: measure}, we prove that the set of $\alpha$ for which the floor function cannot be determined has Haar measure $0$.

The input transformation is identical to that in Equation~\eqref{Eq: inputtransformationsec2}, as it involves only arithmetic manipulations. 
For notational convenience, the following definition introduces the recurrences satisfied by the coefficients of the M\"obius transformation after a given number of input transformations.
\begin{Definition}\label{Def: recur}
Let us consider the continued fraction $[a_0,a_1,\ldots]$ and $x_0,y_0,z_0,t_0\in \Q$. Let us define, the sequences $\{x_n\}_{n\geq 0}$, $\{y_n\}_{n\geq 0}$, $\{z_n\}_{n\geq 0}$, $\{t_n\}_{n\geq 0}$ as:
\[\begin{cases}
x_{n+1}=x_na_n+y_n\\
y_{n+1}=x_n\\
z_{n+1}=z_na_n+t_n\\
t_{n+1}=z_n.
\end{cases}\]
\end{Definition}
It is not hard to see that, in the notation of Definition~\ref{Def: recur}, the following equality holds, for all $n\geq 1$:
\[\frac{x_0 \alpha + y_0}{z_0 \alpha + t_0} = \frac{x_{n} \alpha_{n} + y_{n}}{z_{n} \alpha_{n} + t_{n}}.\]

In Proposition~\ref{Pro: digitsum} and Remark~\ref{Rem: mostpato}, we have seen that if
\begin{equation}\label{Eq: unhoped}
v_p(x\lfloor\alpha\rfloor_p+y)\geq v_p(x)+1,
\end{equation}
then all the known digits are canceled due to carry-overs in the $p$-adic expansion of $x\alpha+y$. The following lemma studies the behavior of $v_p(x\alpha+y)$ after the input transformations and it is heavily used in the following arguments.

\begin{Lemma}\label{Lem: rec}
Let $\{a_n\}_{n\geq 0}$ be a sequence of $p$-adic numbers such that $v_p(a_n)=-r_n$, with $r_n\geq 1$ for all $n\geq 1$. Let $x_0,y_0\in\Q_p$. Let us denote
\[\mu_n=\min\{ v_p(x_na_n),v_p(y_n)\},\]
where the sequences $\{x_n\}_{n\geq 0}$ and $\{y_n\}_{n\geq 0}$ are as in Definition~\ref{Def: recur}.
If, for some $n\geq 0$,
\[\mu_n\leq v_p(x_na_n+y_n)\leq \mu_n+r_n,\]
then \[v_p(x_ma_m)< v_p(y_m) \quad \text{for all} \quad m\geq n+1.\]

\begin{proof}
First of all notice that, if $v_p(x_na_n+y_n)=\mu_n$, then
\[v_p(a_{n+1}x_{n+1}) = v_p(a_{n+1}) + \mu_n \le v_p(a_{n+1}) + v_p(a_{n}) + v_p(x_{n})<v_p(x_{n})=v_p(y_{n+1}).\]
If $\mu_n+1\leq v_p(x_na_n+y_n)\leq \mu_n+r_n$,
then some carry-over occurs in the lower-order digits. In this case we have:
\begin{align*}
v_p(x_{n+1}a_{n+1})\leq \mu_n+r_n-r_{n+1}=v_p(x_n)-r_n+r_n-r_{n+1}<v_p(x_n)=v_p(y_{n+1}),
\end{align*}
and the claim is proved.
\end{proof}
\end{Lemma}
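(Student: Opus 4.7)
The plan is to first establish $v_p(x_{n+1} a_{n+1}) < v_p(y_{n+1})$ from the hypothesis, and then observe that this strict inequality is self-propagating, so a short induction finishes the job.

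For the base case, I would use the recurrences $x_{n+1} = x_n a_n + y_n$ and $y_{n+1} = x_n$ and split on whether the valuation of the sum $x_n a_n + y_n$ attains the ultrametric minimum $\mu_n$ or strictly exceeds it. In the first subcase, combining $\mu_n \leq v_p(x_n a_n) = v_p(x_n) - r_n$ with $v_p(a_{n+1}) = -r_{n+1}$ gives
\[
v_p(x_{n+1} a_{n+1}) = \mu_n - r_{n+1} \leq v_p(x_n) - r_n - r_{n+1} < v_p(x_n) = v_p(y_{n+1}),
\]
using $r_n, r_{n+1} \geq 1$. In the second subcase the cancellation $v_p(x_n a_n + y_n) > \mu_n$ forces $v_p(x_n a_n) = v_p(y_n) = \mu_n$, so in particular $\mu_n = v_p(x_n) - r_n$; then the hypothesis $v_p(x_n a_n + y_n) \le \mu_n + r_n$ gives $v_p(x_{n+1}) \leq v_p(x_n)$, whence
\[
v_p(x_{n+1} a_{n+1}) \leq v_p(x_n) - r_{n+1} < v_p(x_n) = v_p(y_{n+1}).
\]

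For the inductive step, suppose $v_p(x_m a_m) < v_p(y_m)$ has been shown for some $m \geq n+1$. Then $\mu_m = v_p(x_m a_m)$, and since the two summands have distinct valuations, the ultrametric yields equality $v_p(x_m a_m + y_m) = \mu_m$. Hence the hypothesis $\mu_m \leq v_p(x_m a_m + y_m) \leq \mu_m + r_m$ is trivially satisfied at index $m$, and the base-case argument applied at step $m$ produces $v_p(x_{m+1} a_{m+1}) < v_p(y_{m+1})$, completing the induction.

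The only subtle point is the subcase split in the base case: one has to recognize that genuine cancellation (i.e.\ $v_p(x_n a_n + y_n) > \mu_n$) forces $v_p(x_n a_n) = v_p(y_n)$, which pins $\mu_n = v_p(x_n) - r_n$ and makes the bound $\mu_n + r_n = v_p(x_n)$ work exactly. Once this is noticed, the rest is a direct computation with Definition~\ref{Def: recur}, and the self-propagation observation removes any need to unfold the recurrences further.
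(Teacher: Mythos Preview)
Your proof is correct and follows essentially the same approach as the paper: split on whether $v_p(x_na_n+y_n)$ attains the ultrametric minimum $\mu_n$ or strictly exceeds it (forcing $v_p(x_na_n)=v_p(y_n)=\mu_n$), and in each case bound $v_p(x_{n+1}a_{n+1})$ against $v_p(y_{n+1})=v_p(x_n)$. Your write-up is in fact slightly more complete than the paper's, since you explicitly carry out the induction to all $m\ge n+1$ via the self-propagation observation, whereas the paper stops after establishing the $m=n+1$ case and leaves that step implicit.
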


Lemma~\ref{Lem: rec} states that if we are not in the case where
\[v_p(x\alpha+y)\geq v_p(x)+1,\]
then from the next input transformation onward we remain in a \emph{good} case, that is the one for which
\[v_p(x\alpha+y)=\min\{v_p(x\alpha),v_p(y)\},\]
and no digit carry-overs occur. In particular, we end up in the specific case where we have $v_p(x\alpha)<v_p(y)$.
On the contrary, we show in the following remark that, whenever we are in the \textit{pathological} case when the hypotheses of Lemma~\ref{Lem: rec} are not satisfied, any of the cases can occur after one input transformation.

\begin{Remark}\label{Rem: fromdisastroso}
In the notation of Lemma~\ref{Lem: rec}, if $v_p(x_na_n+y_n) > \mu_n+r_n$, then after an input transformation, we may either remain in this case or end up in a \textit{good} case. 

For example, with Ruban's algorithm, let $p\ge3$ and let
\[x_0 = p^2 + p^3 + p^4, \quad y_0 = (p-1) \cdot p + (p-2) \cdot p^2, \quad \text{and} \quad a_0 = \frac{1}{p}.\]
We have 
\[v_p(a_0x_0 + y_0) = v_p(2 \cdot p^3)=3 > 2 = \mu_0 + r_0.\]
After an input transformation, \[x_1 = a_0x_0 + y_0 = 2 \cdot p^3, \quad y_1 = p^2 + p^3 + p^4.\]
If $a_1 = \frac{(p-1)/2}{p} + (p-1)$ then 
\[v_p (a_1 x_1 + y_1) = v_p(3 \cdot p^4) = 4 > 3 = \mu_1 + r_1,\] and hence we remain in the same case.
If $v_p(a_1) \ne -1$, then $v_p(a_1 x_1) \ne v_p(y_1)$ and $v_p(x_{1}a_{1}+y_{1})= \mu_{1}$, so that we are in a good case.
\end{Remark}

Before delving into the case
\[v_p(x\alpha+y)\geq v_p(x)+1,\]
we prove that, in all other cases, the value of $v^{(2)}-v^{(1)}$ appearing in Theorem~\ref{thmfloor} becomes fixed after a certain number of input transformations and remains constant thereafter.
\begin{Proposition}\label{Prop: othername}
Consider a transformation $\frac{x \alpha + y}{z \alpha + t}$ such that
\begin{align*}
 v_p(x\alpha+y)\le v_p(x) \quad \text{and} \quad v_p(z\alpha+t)\le v_p(z).
\end{align*}
Then, for all $n\geq 1$,
\begin{align*}v_p(x_na_n)<v_p(y_n) \quad \text{and} \quad v_p(z_na_n)<v_p(t_n).
\end{align*}
Moreover, the quantity \[\left\lfloor \frac{x \alpha + y}{z \alpha + t}\right\rfloor_p = \left\lfloor\frac{x_n\alpha_n+y_n}{z_n\alpha_n+t_n}\right\rfloor_p,\]
can be determined for some \(n \ge 1\) if and only if 
\begin{equation}\label{Eq: z1x1}
r_n=-v_p(\alpha_n)\geq v_p(z_1)-v_p(x_1).
\end{equation}
\end{Proposition}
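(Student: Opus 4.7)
The plan is to split the argument into two parts: propagating the valuation inequalities via Lemma~\ref{Lem: rec}, and then reducing the output condition of Theorem~\ref{thmfloor} to the stated form.

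For the first part, I would verify that the hypothesis of Lemma~\ref{Lem: rec} holds at $n=0$ for both pairs $(x_0,y_0)$ and $(z_0,t_0)$. The subtle point is that the proposition's hypothesis is phrased in terms of $v_p(x\alpha+y)$, while the lemma requires a bound on $v_p(x_0 a_0 + y_0)$. Writing $\alpha = a_0 + 1/\alpha_1$ with $v_p(1/\alpha_1) > v_p(a_0) = -r_0$ gives
\[
x_0\alpha + y_0 = (x_0 a_0 + y_0) + \frac{x_0}{\alpha_1}, \qquad v_p\left(\frac{x_0}{\alpha_1}\right) > v_p(x_0).
\]
The ultrametric inequality then forces $v_p(x_0 a_0 + y_0) = v_p(x_0\alpha+y_0) \le v_p(x_0)$, and a short case split on whether the minimum $\mu_0$ is attained by $v_p(x_0 a_0)$ or by $v_p(y_0)$ yields $v_p(x_0 a_0 + y_0) \le \mu_0 + r_0$ in each case. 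Hence Lemma~\ref{Lem: rec} applies and gives $v_p(x_m a_m) < v_p(y_m)$ for all $m \ge 1$; the same argument applied to $(z_0, t_0)$ yields $v_p(z_m a_m) < v_p(t_m)$.

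For the second part, using $\lfloor \alpha_n\rfloor_p = a_n$ together with the strict inequalities just established, one sees that for all $n \ge 1$,
\[
v_n^{(1)} = v_p(x_n a_n + y_n) = v_p(x_n) - r_n, \qquad v_n^{(2)} = v_p(z_n a_n + t_n) = v_p(z_n) - r_n.
\]
Since $r_n \ge 1$, the pathological conditions $v_n^{(1)} \ge v_p(x_n) + 1$ and $v_n^{(2)} \ge v_p(z_n) + 1$ of Theorem~\ref{thmfloor} cannot occur. A direct substitution gives $m_n = r_n$ and $v_n^{(2)} - v_n^{(1)} = v_p(z_n) - v_p(x_n)$, so Theorem~\ref{thmfloor}'s output condition at step $n$ reduces to $r_n \ge v_p(z_n) - v_p(x_n)$. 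Finally, the recurrences of Definition~\ref{Def: recur} together with the strict inequalities give $v_p(x_{n+1}) = v_p(x_n) - r_n$ and $v_p(z_{n+1}) = v_p(z_n) - r_n$ for $n \ge 1$, so the difference $v_p(z_n) - v_p(x_n)$ is constant in $n \ge 1$ and equals $v_p(z_1) - v_p(x_1)$. Coupled with the necessity clause of Theorem~\ref{thmfloor}, this yields the claimed equivalence.

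The main obstacle is the bridging step in the first part: translating the hypothesis $v_p(x\alpha + y) \le v_p(x)$ about the complete quotient $\alpha$ into the combinatorial hypothesis of Lemma~\ref{Lem: rec} about the partial quotient $a_0$. Once this ultrametric manipulation is in place, the rest of the proof is a mechanical substitution into Theorem~\ref{thmfloor} together with the telescoping observation that the $-r_n$ contributions to $v_p(x_n)$ and $v_p(z_n)$ cancel in the difference.
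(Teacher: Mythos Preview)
Your proposal is correct and follows essentially the same route as the paper: invoke Lemma~\ref{Lem: rec} at $n=0$ to obtain the strict inequalities for all $n\ge 1$, compute $v_n^{(1)}=v_p(x_n)-r_n$ and $v_n^{(2)}=v_p(z_n)-r_n$, substitute into Theorem~\ref{thmfloor}, and observe that $v_p(z_n)-v_p(x_n)$ is constant in $n\ge 1$. The paper simply asserts that ``the hypotheses of Lemma~\ref{Lem: rec} are satisfied'' without the ultrametric bridging step you carry out; your argument that $v_p(x_0a_0+y_0)=v_p(x_0\alpha+y_0)\le v_p(x_0)$ (using $v_p(x_0/\alpha_1)>v_p(x_0)$) and the ensuing case split on $\mu_0$ fill in this detail explicitly.
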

\begin{proof}
The hypotheses of Lemma~\ref{Lem: rec} are satisfied, so that, for all $n\geq 1$,
\[v_p(x_na_n)<v_p(y_n), \quad v_p(z_na_n)<v_p(t_n).\]
This means that, using the notation of Theorem~\ref{thmfloor}, and writing each quantity for the generic $\alpha_n$, for all $n\geq 1$,
\begin{align*}
v^{(1)}_n&=\min \{v_p(x_n)-r_n,v_p(y_n)\}=v_p(x_n)-r_n,\\
v^{(2)}_n&=\min \{v_p(z_n)-r_n,v_p(t_n)\}=v_p(z_n)-r_n.
\end{align*}
Therefore, for all $n\geq 1$, 
\[v^{(2)}_n-v_n^{(1)}=v_p(z_n)-v_p(x_n).\] In particular, for $n\geq 2$,
\[v^{(2)}_n-v_n^{(1)}=v_p(z_n)-v_p(x_n)=v_p(a_{n-1}z_{n-1})-v_p(a_{n-1}x_{n-1})=v_p(z_{n-1})-v_p(x_{n-1}),\]
where we have used Lemma~\ref{Lem: rec} in the second equality. Therefore, by proceeding inductively we have, for all $n\geq 1$,
\[v^{(2)}_n-v_n^{(1)}=v_p(z_1)-v_p(x_1),\]
where 
\begin{align*}
v_p(x_1) &= v^{(1)} = \min \{ v_p(x \alpha), v_p(y) \},\\
\quad v_p(z_1) &=  v^{(2)} = \min \{ v_p(z \alpha), v_p(t) \}.
\end{align*}

Therefore, by the condition of Theorem~\ref{thmfloor} we are able to determine \[\left\lfloor\frac{x_n\alpha_n+y_n}{z_n\alpha_n+t_n}\right\rfloor_p,\]
for some $n\geq1$ if and only if 
\[r_n=-v_p(\alpha_n)\geq v_p(z_1)-v_p(x_1),\]
and the claim is proved.
\end{proof}

The importance of Proposition~\ref{Prop: othername} is that, once we are in a \emph{good} case, we know that $v_n^{(2)}-v_n^{(1)}$ is constant for all $n\ge1$. Therefore, the problem of computing the partial quotient
\[\lfloor\gamma\rfloor_p=\left\lfloor\frac{x\alpha+y}{z\alpha+t}\right\rfloor_p,\]
depends on the existence of an index $n\ge0$ for which $\alpha_n$ has sufficiently small valuation.
For this reason, as $v_p(x_1)$ and $v_p(z_1)$ are fixed, it is not guaranteed that condition~\eqref{Eq: z1x1} is ever satisfied for some $n\geq 0$. In fact, it heavily depends on the valuation of the partial quotients of $\alpha$.

\begin{Example}\label{Exa: neveroutput}
Let $p=5$, $\alpha \in \Q_5$ and
\[\alpha = \left [1, \overline{\frac{1}{5}}\right ].\]
Suppose we want to compute the continued fraction of the transformation $\gamma=\frac{\alpha}{25}$.
Using the above notation, $(x,y,z,t)=(1,0,0,25)$. Then $v^{(1)} =  0$, $v^{(2)} = 2$ and
\[m = \min \{v_5(x) - v^{(1)}, v_5(z) - v^{(2)} \} = 0,\]
so that $m < v^{(2)} - v^{(1)}$. After an input transformation, we obtain $(x_1,y_1,z_1,t_1)=(1,1,25,0)$. Since
\[1=r_n = - v_5(\alpha_n) < 2 = v_5(z_1)-v_5(x_1) \quad \text{for all }n\ge1,\]
by Proposition~\ref{Prop: othername}, we are never able to compute $\left \lfloor \frac{\alpha}{25}\right \rfloor_5$.
\end{Example}

We have seen in Example~\ref{Exa: neveroutput} that, even if we are guaranteed that 
\begin{align*}
 v_p(x_n\alpha_n+y_n)&\le v_p(x_n),\\
 v_p(z_n\alpha_n+t_n)&\le v_p(z_n),
\end{align*}
it can still happen that we are never able to perform an output, depending on the valuation of the partial quotients.\bigskip

In what follows, and for the remainder of this section, we handle the last problematic case when inequality~\eqref{Eq: unhoped} holds either for the numerator or the denominator of $\frac{x\alpha+y}{z\alpha+t}$. We have seen in Remark~\ref{Rem: fromdisastroso} that if we are in the case 
\[v_p(x_n a_n + y_n) \geq v_p(x_n)+1,\]
then, after one input transformation, we may obtain again 
\[v_p(x_{n+1} a_{n+1} + y_{n+1}) \geq v_p(x_{n+1})+1,\]
hence remaining in the case where we are unable to determine any $p$-adic digit of the transformation
\[\frac{x_{n+1}\alpha_{n+1}+y_{n+1}}{z_{n+1}\alpha_{n+1}+t_{n+1}}.\]
The remaining question is whether, after finitely many input transformations, we are guaranteed to reach a \textit{good} case, i.e. whether there exists an index $m$ such that
\[v_p(x_{m} a_{m} + y_{m}) \leq v_p(x_{m}),\]
or whether it is instead possible that 
\begin{equation}\label{Eq: patoldis}
v_p(x_{n} a_{n} + y_{n}) \geq v_p(x_{n})+1,
\end{equation}
for all $n \ge 0$. We are going to prove that, starting from $x\alpha+y$, if Equation~\eqref{Eq: patoldis} holds for all $n\geq0$, then $\{a_n\}_{n\geq0}$ is the continued fraction expansion of $-\frac{y}{x}$, so that $x\alpha+y=0$. Proposition~\ref{Prop: finexpansion} characterizes the $p$-adic numbers having finite continued fractions with respect to the Ruban, \textit{Browkin I}, and Algorithm~\eqref{Alg: MR} expansions. 
Therefore, via Ruban's algorithm, Equation~\eqref{Eq: patoldis} holds for all $n\geq0$ if and only if $-\frac{y}{x}<0$ and $\alpha = -\frac{y}{x}$. In fact, if $-\frac{y}{x}>0$, the continued fraction expansion of $-\frac{y}{x}$ is finite and so we are able to determine the floor function after a finite number of input transformations. With \textit{Browkin I}, Equation~\eqref{Eq: patoldis} cannot hold for all $n\geq0$, because the continued fraction expansion of every rational number is finite.
We start by showing an example in which this happens with Ruban's algorithm.

\begin{Example}
In certain cases, no digit of $\lfloor x\alpha + y \rfloor_p$ can be computed, even after applying an arbitrary number of input transformations, when the Ruban's continued fraction expansion is considered.
In particular, we do not know any digit of $x_n\alpha_n+y_n$, for any $n\geq0$.
Let \[\alpha = \left [\frac{1}{p}, \overline{\frac{p^2-1}{p}} \right ], \quad x=p, \quad \text{and} \quad y=p^2-1.\]   
We have
\[
x \alpha + y = x \left( a_0 + \frac{1}{\alpha_1} \right) + y = p^2 + \frac{p}{\alpha_1},
\]
with \( v_p\left( \frac{p}{\alpha_1} \right) \ge 2 \). At this point, it is not possible to compute any digit of \( x\alpha + y \), since \( p^2 \) is added to a quantity of valuation $\ge 2$ that remains unknown. Inductively, we can show that, after $k \ge 1$ input transformations, we obtain $x_k = p^{k+1}$ and $y_k = p^k$, and
\[p^{k+1} \alpha_k + p^k = p^{k+1} a_k + p^k + \frac{p^{k+1}}{\alpha_{k+1}} = p^{k+2} + \frac{p^{k+1}}{\alpha_{k+1}}.\]
It is not possible to compute any digit of \( x\alpha + y \), since \( p^{k+2} \) is added to $\frac{p^{k+1}}{\alpha_{k+1}}$, which has valuation $\ge k+2$ and, hence, it remains unknown. Since the Ruban $p$-adic continued fraction of $-p$ is 
\[-p = \left [0, \overline{\frac{p^2-1}{p}} \right ],\]
we obtain $\alpha = \frac{1}{p} - p$ and therefore
$x \alpha + y = 0$.
\end{Example}

\begin{Proposition}\label{Prop: uniquedis}
Let $x, y \in \Q$, with $x \ne 0$. Then there exists exactly one $a \in \mathcal{B}$ and $a\in \mathcal{R}$ such that
\begin{equation}\label{Eq: disastroso}
v_p(x a + y) \geq v_p(x)+1.
\end{equation}
\end{Proposition}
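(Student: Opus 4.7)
The plan is to rewrite the inequality $v_p(xa + y) \geq v_p(x)+1$ as a $p$-adic closeness condition. Since $x \neq 0$, the factorization $v_p(xa+y) = v_p(x) + v_p(a + y/x)$ shows that the inequality is equivalent to $v_p(a - \beta) \geq 1$, where $\beta := -y/x \in \Q_p$. The statement then becomes: each of $\mathcal{R}$ and $\mathcal{B}$ contains exactly one element whose $p$-adic expansion agrees with that of $\beta$ on all coefficients of $p^n$ with $n \leq 0$.

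For existence, I would exhibit a witness directly via the $p$-adic floor functions defined in \eqref{Eq: rubfunc} and \eqref{Eq: sfunc}. Set $a := \lfloor \beta \rfloor_p^{\mathcal{R}} \in \mathcal{R}$ in the first case and $a := s(\beta) \in \mathcal{B}$ in the second. In both cases $\beta - a$ is, by construction, the tail of the $p$-adic expansion of $\beta$ starting from the coefficient of $p^1$, so $v_p(a - \beta) \geq 1$ as required.

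For uniqueness, suppose $a, a' \in \mathcal{R}$ both satisfy $v_p(a - \beta) \geq 1$. Then $v_p(a - a') \geq 1$, i.e.\ $\tfrac{a-a'}{p} \in \Z_p$. But $a - a' \in \Z[1/p]$, and any rational whose denominator is a power of $p$ and whose $p$-adic valuation is nonnegative must be an integer, so $\Z[1/p] \cap \Z_p = \Z$. Hence $\tfrac{a-a'}{p} \in \Z$, which gives $a - a' \in p\Z$. On the other hand, $a, a' \in [0, p)$ forces $|a - a'| < p$, and therefore $a = a'$. The identical argument, with the interval $(-p/2, p/2)$ in place of $[0, p)$, yields uniqueness in $\mathcal{B}$.

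I do not foresee a substantive obstacle: the whole argument reduces to a translation of the valuation inequality into a congruence modulo $p$, combined with the elementary size constraints built into the very definitions of $\mathcal{R}$ and $\mathcal{B}$.
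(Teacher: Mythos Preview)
Your proof is correct and considerably cleaner than the paper's. The paper proceeds by a direct digit-by-digit computation: it first disposes of the case $v_p(y) > v_p(x)$ (where $a=0$ is forced), then in the remaining case writes out the $p$-adic expansions of $x$, $y$, and the unknown $a$, and expresses the condition $v_p(xa+y) \geq v_p(x)+1$ as a lower-triangular linear system over $\F_p$ for the digits of $a$, whose coefficient matrix has nonzero diagonal and hence a unique solution. Your reduction to $v_p(a - \beta) \geq 1$ with $\beta = -y/x$ bypasses all of this: existence is immediate from the very definition of the floor functions, and uniqueness follows from the one-line observation that two elements of $\Z[1/p]$ that are $p$-adically close and archimedeanly close must coincide. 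The paper's argument is more explicit about how the digits of $a$ are determined by those of $x$ and $y$, which aligns with the surrounding emphasis on digit propagation; your argument is shorter, more conceptual, and makes transparent that the unique $a$ is precisely the relevant $p$-adic floor of $-y/x$ --- a fact the paper only establishes afterwards, in the proof of Corollary~\ref{Cor: valunique}.
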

\begin{proof}
First of all notice that, if $v_p(y)>v_p(x)$, then
\[v_p(xa+y)=v_p(xa)=v_p(x)+v_p(a),\]
so that \eqref{Eq: disastroso} is satisfied only for $a=0$. If $v_p(y)\leq v_p(x)$, then we necessarily have $v_p(xa)=v_p(y)$, as otherwise 
\[v_p(xa+y)=\min\{v_p(xa),v_p(y)\}\leq v_p(xa)\leq v_p(x).\]
Therefore, the valuation of $a$ is determined and it is 
\[v_p(a) = v_p(y) - v_p(x).\]
For simplicity, let us denote $v_x=v_p(x)$, $v_y=v_p(y)$ and let us write $x = \sum\limits_{n=v_x}^{\infty}x_n p^n$, $y = \sum\limits_{n=v_y}^{\infty}y_n p^n$, $a = \sum\limits_{n=-r}^{0}c_n p^n$ and $x a = \sum\limits_{n=v_y}^{\infty} d_n p^n$. Then
\[\left ( \sum_{n=v_x}^{\infty}x_n p^n \right ) \left ( \sum_{n=-r}^{0}c_n p^n \right ) + \sum_{n=v_y}^{\infty} y_n p^n = \sum_{n=v_y}^{\infty} d_n p^n + \sum_{n=v_y}^{\infty} y_n p^n = \sum_{n=v_x + 1}^{\infty} f_n p^n,\]
for some $f_n \in \Z/p\Z$, with $n \ge v_x + 1$.
This means that the following equalities hold:
\begin{equation}\label{Eq: systemdisastroso}\begin{cases}
d_{v_y} + y_{v_y} = 0  \\
d_{v_y + 1} + y_{v_y + 1} + 1 = 0 \\
d_{v_y + 2} + y_{v_y + 2} + 1 = 0 \\
\quad \vdots \\
d_{v_x} + y_{v_x} + 1  = 0, \\
\end{cases}
\end{equation}
where all the equalities are considered modulo $p$.
Substituting $d_{v_y + n} = \sum\limits_{i=0}^n c_{-r+i}x_{v_x +n-i}$, for $n=0,\ldots, r$ in \eqref{Eq: systemdisastroso}, we obtain
\begin{equation*}
\begin{cases}
c_{-r}x_{v_x} + y_{v_y}= 0 \\
c_{-r+1}x_{v_x} + c_{-r}x_{v_x+1} + y_{v_y + 1} + 1 = 0 \\
c_{-r+2}x_{v_x} + c_{-r+1}x_{v_x + 1} + c_{-r}x_{v_x+2} + y_{v_y + 2} + 1 = 0 \\
\quad \vdots \\

\sum_{i=0}^r c_{-r+i}x_{v_x +r-i} + y_{v_x} + 1 = 0. \\
\end{cases}
\end{equation*}
Hence,
\begin{align*}
c_{-r} &=  -y_{v_y}\cdot x_{v_x}^{-1},\\ 
c_{-r+1} &=  (-c_{-r}x_{v_x+1} - y_{v_y + 1} - 1)\cdot x_{v_x}^{-1},
\end{align*}
and analogously for the other indices.
It is equivalent to solve the linear system in $\mathbb{F}_p$
\[\begin{pmatrix}
x_{v_x} & 0 & 0 & \ldots & 0 \\
x_{v_x +1} & x_{v_x} & 0 & \ldots & 0 \\
x_{v_x +2} & x_{v_x +1} & x_{v_x} & \ldots & 0 \\
\vdots & \vdots & \vdots & \ddots &  \vdots \\
x_{v_x +r} & x_{v_x +r-1} & x_{v_x +r-2} & \ldots & x_{v_x} \\
\end{pmatrix} \begin{pmatrix} c_{-r} \\ c_{-r+1} \\ c_{-r+2} \\ \vdots \\ c_{0} \end{pmatrix} = \begin{pmatrix}  - y_{v_y} \\ -1- y_{v_y+1} \\ -1- y_{v_y+2} \\ \vdots \\ -1- y_{v_x} \end{pmatrix},\]
which admits an unique solution since 
\[\det \begin{pmatrix}
x_{v_x} & 0 & 0 & \ldots & 0 \\
x_{v_x +1} & x_{v_x} & 0 & \ldots & 0 \\
x_{v_x +2} & x_{v_x +1} & x_{v_x} & \ldots & 0 \\
\vdots & \vdots & \vdots & \ddots &  \vdots \\
x_{v_x +r} & x_{v_x +r-1} & x_{v_x +r-2} & \ldots & x_{v_x} \\
\end{pmatrix} = x_{v_x}^{r+1} \ne 0.\]
In fact, since $v_x=v_p(x)$, then the coefficient $x_{v_x}$ is nonzero.
\end{proof}

A direct consequence of Proposition~\ref{Prop: uniquedis} is the following characterization of the cases in which \eqref{Eq: patoldis} holds for all $n$.

\begin{Corollary}\label{Cor: valunique}
Let $x, y \in \Q$, with $x\neq 0$. Then there exists a unique $\alpha \in \Q_p$, with $\alpha = [a_0, a_1, \ldots]$ such that
\begin{equation}\label{Eq: toeli}
v_p(x_na_n + y_n) \geq v_p(x_n)+1
\end{equation}
for all $n\geq 1$, and it is $\alpha=-\frac{y}{x}$.
\begin{proof}
Given $x_n, y_n \in \mathbb{Q}$, Proposition~\ref{Prop: uniquedis} determines the unique partial quotient $a_n$ such that
\[
v_p(x_n a_n + y_n) \geq v_p(x_n) + 1.
\]
This value of $a_n$ uniquely determines $x_n = x_n a_n + y_n$ and $y_{n+1} = x_n$. Therefore, for all $n\geq0$ the partial quotient $a_n$ is uniquely determined by $x_n$ and $y_n$ and it uniquely determines $x_{n+1}$ and $y_{n+1}$. This means that there exists a unique sequence $\{a_n\}_{n\geq0}$ of partial quotients such that \eqref{Eq: toeli} holds for all $n$. We now show that the condition is satisfied for $\alpha=-\frac{y}{x}$, and hence that this is the unique such element. Indeed, since $a_0=\lfloor-\frac{y}{x}\rfloor_p$, we have
\[v_p\left(-\frac{y}{x}-a_0\right)=v_p\left(\frac{y}{x}+a_0\right)=v_p\left(\frac{xa_0+y}{x}\right)\geq 1,\]
and thus $v_p(xa_0+y)\geq v_p(x)+1$. The next complete quotient is
\[\alpha_1=\frac{1}{-\frac{y}{x}-a_0}=\frac{x}{-(xa_0+y)}=-\frac{y_1}{x_1}.\]
Similarly, $a_1=\left\lfloor-\frac{y_1}{x_1}\right\rfloor_p$ satisfies condition~\eqref{Eq: disastroso} for $n=1$. It is not hard to see inductively that the $n$-th complete quotient of $-\frac{y}{x}$ is $\alpha_n=-\frac{y_n}{x_n}$ and the $n$-th partial quotient $a_n=\left\lfloor-\frac{y_n}{x_n}\right\rfloor_p$ satisfies condition~\eqref{Eq: disastroso}. Therefore, the unique $\alpha$ satisfying \eqref{Eq: disastroso} for all $n$ is  $\alpha=-\frac{y}{x}$.
\end{proof}
\end{Corollary}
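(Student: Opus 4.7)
The plan is to reduce the claim to an iteration of Proposition~\ref{Prop: uniquedis}, and then to verify that $\alpha = -y/x$ indeed realizes the condition.

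For uniqueness, I would proceed by induction on $n$. Given $(x_n, y_n)$ with $x_n \neq 0$, Proposition~\ref{Prop: uniquedis} produces a unique partial quotient $a_n$ in the chosen digit set ($\mathcal{R}$ or $\mathcal{B}$) satisfying $v_p(x_n a_n + y_n) \geq v_p(x_n) + 1$. The recurrences in Definition~\ref{Def: recur} then determine $(x_{n+1}, y_{n+1}) = (x_n a_n + y_n,\, x_n)$ uniquely, so the induction propagates. Starting from the input pair $(x_0, y_0) = (x, y)$, every partial quotient is forced, and hence the entire sequence $\{a_n\}$, and so $\alpha$ itself, is uniquely determined.

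For existence, I would show inductively that the complete quotients of $\alpha = -y/x$ satisfy $\alpha_n = -y_n/x_n$. The base case is just the definition of $(x_0, y_0)$. For the inductive step, assuming $\alpha_n = -y_n/x_n$ and setting $a_n = \lfloor \alpha_n \rfloor_p$, a short manipulation gives
\[\alpha_{n+1} = \frac{1}{\alpha_n - a_n} = -\frac{x_n}{x_n a_n + y_n} = -\frac{y_{n+1}}{x_{n+1}},\]
matching Definition~\ref{Def: recur}. Then, since $v_p(\alpha_n - a_n) \geq 1$ by definition of the $p$-adic floor function, rearranging $\alpha_n - a_n = -(x_n a_n + y_n)/x_n$ yields exactly $v_p(x_n a_n + y_n) \geq v_p(x_n) + 1$ at every step.

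The main potential subtlety, as far as I can see, is the case in which the continued fraction expansion of $-y/x$ terminates at some finite index $N$, which forces $x_{N+1} = 0$: at that step the condition $v_p(x_N a_N + y_N) \geq v_p(x_N) + 1$ holds trivially (as $v_p(0) = +\infty$), but no further partial quotient is defined, so the statement has to be read in this truncated sense. Beyond this bit of bookkeeping, the substantive work has already been done in Proposition~\ref{Prop: uniquedis}, and what remains is essentially combining it with the recurrences of Definition~\ref{Def: recur}.
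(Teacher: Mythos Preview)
Your proposal is correct and follows essentially the same approach as the paper: uniqueness via iterated application of Proposition~\ref{Prop: uniquedis} together with the recurrences of Definition~\ref{Def: recur}, and existence by verifying inductively that the complete quotients of $-y/x$ satisfy $\alpha_n=-y_n/x_n$, so that $v_p(\alpha_n-a_n)\geq 1$ gives the required inequality. Your additional remark on the finite-expansion case is a nice piece of bookkeeping that the paper leaves implicit.
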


In the following proposition, we give an alternative direct proof of the latter result.

\begin{Proposition}\label{Prop: tolet}
Let $x,y \in \mathbb{Q}$, with $x \ne 0$. Let $\alpha \in \Q_p$ be given by the infinite Ruban or \textit{Browkin I} continued fraction expansion $\alpha = [a_0,a_1,a_2, \ldots]$, where $v_p(a_0)\leq0$ and $v_p(a_n)<0$ for all $n \ge 1$. If
\begin{equation}\label{Eq: condvx}
v_p(x_{n+1})\geq v_p(x_{n})+1 \quad \text{for all }n \ge 0,
\end{equation}
then $x\alpha+y=0$, that is $\alpha = -\frac{y}{x}$.
\end{Proposition}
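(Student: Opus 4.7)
The plan is to show $v_p(x\alpha+y) \ge v_p(x) + 2n + 1$ for every $n \ge 1$, which forces $x\alpha+y = 0$ and hence $\alpha = -y/x$. The strategy is to iterate the input transformation and track valuations, using the hypothesis $v_p(x_{n+1}) \ge v_p(x_n) + 1$ at every step.

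First I would establish, by induction on $n$, the identity
\[
x\alpha + y \;=\; \frac{x_n\alpha_n + y_n}{\alpha_1 \alpha_2 \cdots \alpha_n}, \qquad n \ge 1.
\]
The base case $n=1$ comes from substituting $\alpha = a_0 + 1/\alpha_1$, which yields $x\alpha+y = ((xa_0+y)\alpha_1 + x)/\alpha_1 = (x_1\alpha_1+y_1)/\alpha_1$ directly from Definition~\ref{Def: recur}. The inductive step uses the same substitution $\alpha_n = a_n + 1/\alpha_{n+1}$ together with the recursions for $x_{n+1}, y_{n+1}$. Applied once more, the substitution also rewrites the numerator as the key decomposition
\[
x_n\alpha_n + y_n \;=\; (x_n a_n + y_n) + \frac{x_n}{\alpha_{n+1}} \;=\; x_{n+1} + \frac{x_n}{\alpha_{n+1}},
\]
which is the workhorse for the valuation bound.

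Next I would estimate both sides. The hypothesis $v_p(x_{n+1}) \ge v_p(x_n) + 1$ iterates to $v_p(x_n) \ge v_p(x) + n$, so in particular $v_p(x_{n+1}) \ge v_p(x) + n + 1$. For the second summand, $v_p(a_{n+1}) \le -1$ forces $v_p(\alpha_{n+1}) \le -1$ for both Ruban and \textit{Browkin I}, giving $v_p(x_n/\alpha_{n+1}) \ge v_p(x_n) + 1 \ge v_p(x) + n + 1$. Combining yields $v_p(x_n\alpha_n + y_n) \ge v_p(x) + n + 1$. The same inequality $-v_p(\alpha_i) \ge 1$ for $i \ge 1$ also gives $v_p(\alpha_1\cdots\alpha_n) \le -n$, and substituting both estimates into the key identity yields
\[
v_p(x\alpha+y) \;\ge\; (v_p(x) + n + 1) + n \;=\; v_p(x) + 2n + 1,
\]
for every $n \ge 1$. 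Letting $n \to \infty$ forces $v_p(x\alpha+y) = +\infty$, hence $x\alpha + y = 0$.

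The only delicate point I expect is setting up the telescoping identity and the rewriting of the numerator as $x_{n+1} + x_n/\alpha_{n+1}$; once these are in place, the valuation bound is dictated by the hypothesis and the basic fact that the complete quotients $\alpha_i$ of Ruban and \textit{Browkin I} inherit the (negative) valuation of the corresponding partial quotients $a_i$ for $i \ge 1$.
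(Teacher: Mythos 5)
Your proposal is correct and follows essentially the same route as the paper's proof: the telescoping identity $x\alpha+y=(x_n\alpha_n+y_n)/(\alpha_1\cdots\alpha_n)$, the decomposition $x_n\alpha_n+y_n=x_{n+1}+x_n/\alpha_{n+1}$ together with the hypothesis on $v_p(x_n)$, and letting $n\to\infty$ to force $v_p(x\alpha+y)=+\infty$. Your version merely makes the divergence quantitative (the bound $v_p(x)+2n+1$), whereas the paper contents itself with $v_p(x\alpha+y)>v_p(x_n)\to+\infty$.
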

\begin{proof}
By construction, we have \[x \alpha + y = \frac{1}{\alpha_1} (x_1 \alpha_1 + y_1 ) =   \frac{1}{\prod_{i=1}^n \alpha_i} (x_n \alpha_n + y_n )  \quad \text{for all }n \ge 0.\]
Then, we obtain
\[v_p(x \alpha + y) = - \sum_{i=1}^n v_p(\alpha_i) + v_p(x_n \alpha_n + y_n) > v_p(x_n \alpha_n + y_n).\]
By hypothesis, for all $n\ge 0$, we have 
\[v_p(x_n \alpha_n + y_n) = v_p(x_n \alpha_n + x_{n-1}) \ge v_p(x_n a_n + x_{n-1})>v_p(x_n).\] Since, by \eqref{Eq: condvx}, 
\[\lim\limits_{n \to +\infty} v_p(x_n) = + \infty,\]
we have $v_p(x \alpha + y) = + \infty$, and this means that $x \alpha + y=0$.
\end{proof}

\begin{Remark}
At this point we know that, unless the numerator or the denominator of the M\"obius transformation vanishes (that is, unless $\alpha$ is either $-\frac{y}{x}$ or $-\frac{t}{z}$), after a finite number of input transformations we reach a situation in which no carry-overs occur. 
If \textit{Browkin I} expansion is considered, then $-\frac{y}{x}$ has a finite continued fraction expansion and so, after a finite number of input transformations we are able to compute the next partial quotient.
On the other hand, if Ruban's expansion is considered, then $-\frac{y}{x}$ may have an infinite continued fraction expansion (see Proposition~\ref{Prop: finexpansion}). 
In that case, the condition
\[v_p(x_na_n+y_n)\geq v_p(x_n)+1,\]
holds as long as the elements $a_n$ coincide with the partial quotients of the $p$-adic continued fraction of $-\frac{y}{x}$, and this may occur for arbitrarily many indices $n$.
\end{Remark}

\subsection{Output transformation}

In this section, we study the output transformation performed after the computation of the first partial quotient of
\[\gamma=\frac{x\alpha+y}{z\alpha+t}\]
when the hypotheses of Theorem~\ref{thmfloor} are satisfied. 
The output transformation is identical to that in Equation~\eqref{Eq: outputreal}, as it involves only arithmetic manipulations. In light of the results of Section~\ref{Sec: MobInp}, the only bad case to avoid is the case where $\alpha$ is a zero of either the denominator or the numerator of the transformation. 
The next proposition shows that this is never the case and, whenever the floor function $l=\lfloor \gamma \rfloor_p$ can be computed, both the numerator and denominator of
\[\gamma_1=\frac{1}{\gamma-l}=\frac{z \alpha + t}{(x - l z)\alpha + y -lt},\]
are nonzero.

\begin{Proposition}\label{Pro: outgood}
Let $x,y,z,t \in \Q$, $\alpha \in \Q_p$, and let $l = \left \lfloor \frac{x \alpha + y}{z \alpha + t} \right \rfloor_p$. After an output transformation, we obtain
\[\frac{z \alpha + t}{(x - l z)\alpha + y -lt},\]
with $z \alpha + t \ne 0$ and $(x - lz)\alpha + y -lt \ne 0$.
\end{Proposition}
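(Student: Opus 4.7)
The plan is to prove the two non-vanishing claims separately; each follows from a short observation and no substantial machinery is needed. For the numerator $z\alpha+t$: this quantity is the denominator of the original $\gamma$, so the very existence of $l = \lfloor \gamma \rfloor_p$ as an element of $\Q_p$ forces $z\alpha + t \ne 0$. There is nothing more to check here.

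For the denominator $(x-lz)\alpha + (y-lt)$, the key step is the algebraic identity
\[
(x-lz)\alpha + (y-lt) \;=\; (x\alpha+y) \,-\, l\,(z\alpha+t) \;=\; (z\alpha+t)\,(\gamma - l).
\]
Combined with $z\alpha+t \ne 0$ from the first part, this vanishes if and only if $\gamma = l$. But $\gamma = l = \lfloor\gamma\rfloor_p$ means $\gamma$ coincides with its own $p$-adic floor, in which case its continued fraction terminates at the partial quotient $l$ and no next complete quotient $\gamma_1 = 1/(\gamma-l)$ is defined. Since the output transformation is precisely the operation used to pass from the matrix of $\gamma$ to that of $\gamma_1$, it is only applied when $\gamma \ne l$; in that case the denominator is nonzero.

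The value of this proposition, in view of the warnings of Section~\ref{Sec: MobInp}, lies in the fact that the pathological configurations of the preceding subsection correspond exactly to $\alpha$ being a zero of one of the two linear forms appearing in the governing matrix (cf.\ Corollary~\ref{Cor: valunique} and Proposition~\ref{Prop: tolet}). The argument above shows that, after an output transformation, the new matrix $\begin{pmatrix} z & t \\ x-lz & y-lt \end{pmatrix}$ still avoids both such zeros at $\alpha$, so no new pathology is introduced by an output step. The only real content of the proof is spotting the one-line identity; once it is written down, both non-vanishing statements follow immediately, and I do not anticipate any genuine obstacle.
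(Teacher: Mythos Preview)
Your proof is correct and follows essentially the same approach as the paper: both arguments observe that $z\alpha+t\ne 0$ is required for $l$ to be defined, and both show that $(x-lz)\alpha+(y-lt)=0$ forces $\gamma=l$, in which case the expansion terminates and no output step is taken. Your factorization $(x-lz)\alpha+(y-lt)=(z\alpha+t)(\gamma-l)$ is a slightly cleaner route than the paper's, which instead solves for $\alpha$ and substitutes back (implicitly assuming $x-lz\ne 0$), but the substance is identical.
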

\begin{proof}
The numerator $z \alpha + t \ne 0$, since otherwise $l$ would not be determined, by Theorem~\ref{thmfloor}. If $(x - l z)\alpha + y -lt = 0$, then $\alpha = \frac{-y +lt}{x - l z},$ and
\[\begin{aligned}\frac{x \alpha + y}{z \alpha + t} &= \frac{x \frac{-y +lt}{x - l z} + y}{z \frac{-y +lt}{x - l z} + t} = l.\end{aligned}\]
This implies that
\[\left\lfloor\frac{x \alpha + y}{z \alpha + t}\right\rfloor_p=\frac{x \alpha + y}{z \alpha + t}=l.\]
In this case the transformation has finite continued fraction expansion
\[\frac{x \alpha + y}{z \alpha + t} = [l],\]
hence the algorithm terminates without computing the output transformation.
\end{proof}

\subsection{The algorithm for the M\"obius transformation}\label{Sec: themobalgo}
In this section, we summarize the method we developed for computing Ruban's and \textit{Browkin I} $p$-adic continued fraction of the M\"obius transformation, and examine its finiteness of the algorithm.
At each step, Theorem~\ref{thmfloor} establishes necessary and sufficient conditions to determine the $p$-adic floor function $\lfloor\gamma\rfloor_p$ of the M\"obius transformation. In particular, we have seen that after a finite number of input transformations we always end up in the case
\begin{align}\label{Eq: goodconditions}
\begin{split}
v_p(x_n\alpha_n)&=v_p(x_n a_n)<v_p(y_n),\\
v_p(z_n\alpha_n)&=v_p(z_n a_n)<v_p(t_n),
\end{split}
\end{align}
except when either the numerator or the denominator of the transformation is zero. At this point, we are able to correctly compute the partial quotient
\[l=\left\lfloor\frac{x\alpha+y}{z\alpha+t}\right\rfloor_p,\]
if and only if the condition on the valuations of Theorem~\ref{thmfloor} is satisfied. In particular, we know that there exists $\overline{n}$ such that the conditions~\eqref{Eq: goodconditions} are satisfied for all $n\geq \overline{n}$. Then, by Proposition~\ref{Prop: othername}, we can compute $\lfloor\gamma\rfloor_p$ if and only if
\begin{equation}\label{Eq: condestra}
-v_p(a_n)\geq v_p(z_{\overline{n}})-v_p(x_{\overline{n}}),
\end{equation}
for some $n\geq\overline{n}$. Therefore, the strategy is to perform input transformations until \eqref{Eq: goodconditions} is satisfied. Then, we try to compute the floor function, checking if for some $n\geq {\overline{n}}$, condition~\eqref{Eq: condestra} holds. This is the condition of Proposition~\ref{Prop: othername}, and it merely depends on the $p$-adic valuation of the partial quotients of $\alpha$.
This procedure is summarized in Algorithm~\ref{Alg: Gosp1} and some computational experiments for the performance of this algorithm are shown in Section~\ref{Sec: compMob}. The SageMath implementation of Algorithm~\ref{Alg: Gosp1} is publicly available\footnote{\href{https://github.com/giulianoromeont/p-adic-continued-fractions}{https://github.com/giulianoromeont/p-adic-continued-fractions}}.
\bigskip

Since the right-hand side of \eqref{Eq: condestra} is constant, the fulfillment of the condition only depends on $v_p(a_n)$, for $n\geq\overline{n}$. If the valuation $v_p(a_n)$ is bounded, condition~\eqref{Eq: condestra} is not guaranteed to be satisfied for some $n$, and it may not be possible to determine $\lfloor\gamma\rfloor_p$. Such a situation occurs in Example~\ref{Exa: neveroutput}, where all the partial quotients $a_n$, for $n\geq 1$ have valuation $-1$. If the valuation of the partial quotients of $\alpha$ is unbounded, we are guaranteed that Algorithm~\ref{Alg: Gosp1} terminates correctly, computing the $p$-adic continued fraction of $\gamma$. In Section~\ref{Sec: measure}, and in particular in Corollary~\ref{Coro: unbounded}, we showed that the $p$-adic valuation of the partial quotients for Ruban's and \textit{Browkin I} continued fractions is unbounded for all $\alpha \in \mathbb{Q}_p$, except for a set of Haar measure zero. This implies that, given the coefficients $x,y,z,t$ of the M\"obius transformation, our algorithm terminates correctly for $\mu$-\textit{almost all} inputs $\alpha \in \Q_p$.
Notice that Algorithm~\ref{Alg: Gosp1} computes the partial quotients of the transformation if and only if it is possible to recover them from the knowledge of the partial quotients of $\alpha$. In fact, whenever the algorithm is not able to compute the floor function of the transformation, there is some missing information on its $p$-adic digits, even by using an arbitrary number of partial quotients.

\section{Bilinear fractional transformation of $p$-adic continued fractions}\label{Sec: bilinearfractional}
In this section, we study how to compute Ruban's and \textit{Browkin I} $p$-adic continued fraction expansion of the bilinear fractional transformation of two $p$-adic numbers $\alpha,\beta\in\Q_p$, given by
\[\gamma=\frac{x\alpha\beta+y\alpha+z\beta+t}{e\alpha\beta+f\alpha+g\beta+h},\]
with $x,y,z,t,e,f,g,h\in\Q$.
In the following, we often rewrite the bilinear transformation as 
\begin{equation}\label{Eq: bilform}
x\alpha\beta+y\alpha+z\beta+t=\alpha(x\beta+y)+(z\beta+t),
\end{equation}
in order to split it in two addends.

\begin{Proposition}\label{Prop: bilnum}
Let $\alpha=\sum\limits_{n=-r_\alpha}^{+\infty}c_np^n$ and $\beta=\sum\limits_{n=-r_\beta}^{+\infty}d_np^n\in\Q_p$ and $x,y,z,t \in \Q$. Let us consider
\begin{equation}\label{Eq: bilinearnumerator}
\gamma = x \alpha \beta + y \alpha + z \beta + t.
\end{equation}
Let us suppose to know the first $h_\alpha$ digits of $\alpha$ and the first $h_\beta$ digits of $\beta$. 
Let
\[v = v_p \left ( x\left( \sum\limits_{n=-r_\alpha}^{-r_\alpha+h_\alpha-1}c_np^n\right)\left( \sum\limits_{n=-r_\beta}^{-r_\beta+h_\beta-1}d_np^n\right) + y \sum\limits_{n=-r_\alpha}^{-r_\alpha+h_\alpha-1}c_np^n + z \sum\limits_{n=-r_\beta}^{-r_\beta+h_\beta-1}d_np^n+ t\right)\]
and
\[M= \min \{ v_p(x) -r_{\alpha} - r_{\beta} + \min \{ h_{\alpha}, h_{\beta}\}, v_p(y) - r_{\alpha} + h_{\alpha}, v_p(z) - r_{\beta} + h_{\beta} \}.\]
Then, we know $k$ digits of $\gamma$, where
\[k = M - v\]
and, if $k \leq 0$, no digit of $\gamma$ can be determined.
\end{Proposition}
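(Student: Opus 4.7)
The plan is to decompose $\gamma = x\alpha\beta + y\alpha + z\beta + t$ into its four summands and apply the preceding lemmas of Section~\ref{Sec: aux} successively. First, for each summand I would determine the range of $p$-adic positions in which its digits can be computed from the known digits of $\alpha$ and $\beta$. For $x\alpha\beta$, two applications of Lemma~\ref{Lem: prodab} (first to compute $\alpha\beta$ from the known digits of $\alpha$ and $\beta$, then to multiply by the fully-known $x$, i.e. with $h_x=\infty$) yield $\min\{h_\alpha,h_\beta\}$ known digits of $x\alpha\beta$ starting from position $v_p(x)-r_\alpha-r_\beta$. A single application of the same lemma to each of $y\alpha$ and $z\beta$ gives $h_\alpha$ and $h_\beta$ known digits starting from $v_p(y)-r_\alpha$ and $v_p(z)-r_\beta$, respectively. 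The constant $t$ is entirely known.

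Next, I would encode the range of knowledge of each summand by its \emph{first unknown position}:
\[M_1=v_p(x)-r_\alpha-r_\beta+\min\{h_\alpha,h_\beta\},\quad M_2=v_p(y)-r_\alpha+h_\alpha,\quad M_3=v_p(z)-r_\beta+h_\beta,\quad M_4=+\infty,\]
so that the digits of the $i$-th summand are computable in all positions strictly below $M_i$. Then I would invoke Lemma~\ref{Lem: sumab} three times, forming the sum pairwise: at each step, the combined range of known digits of the partial sum extends exactly up to the minimum of the two first-unknown positions of the addends. Iterating, the digits of $\gamma$ are known in all positions strictly below $M=\min\{M_1,M_2,M_3,M_4\}$, which coincides with the expression given in the statement.

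Finally, to convert this into a count of digits, I would subtract the valuation of the truncated bilinear expression (obtained by replacing $\alpha$ and $\beta$ by their known truncations), which by construction is exactly the quantity $v$ of the statement. This gives $k=M-v$ known digits when $k>0$; when $k\le 0$ the valuation of the truncated expression already reaches or exceeds the first unknown position, so, as in Lemma~\ref{Lem: sumab}, no digit of $\gamma$ can be determined. The main delicacy in the argument lies in the distinction between $v_p(\gamma)$ and the valuation of the truncated expression: hidden carry-overs beyond the known window can raise the true valuation, which is precisely the phenomenon illustrated in Remarks~\ref{Rem: pathological} and~\ref{Rem: riporto}, and this is why $v$ must be defined on the truncation rather than on $\gamma$ itself.
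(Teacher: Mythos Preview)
Your proposal is correct and follows essentially the same approach as the paper: decompose $\gamma$ into its four summands, apply Lemma~\ref{Lem: prodab} to determine the range of known positions for each product term, and then combine via Lemma~\ref{Lem: sumab} to obtain $M$ as the first unknown position and $k=M-v$ as the count. Your version is in fact slightly more explicit than the paper's---you spell out the iterated use of Lemma~\ref{Lem: sumab} and the reason $v$ must be the valuation of the truncation rather than of $\gamma$---whereas the paper simply asserts the final range $[v,M-1]$ after listing the known positions of each summand.
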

\begin{proof}
The known digits of $\alpha$ and $\beta$ are, respectively,
\[\sum\limits_{n=-r_\alpha}^{-r_\alpha+h_\alpha-1}c_np^n \text{ and }  \sum\limits_{n=-r_\beta}^{-r_\beta+h_\beta-1}d_np^n.\]
By Lemma~\ref{Lem: prodab}, we know the digits of valuation 
\[v_p(x) + v_p(\alpha) + v_p(\beta), \ldots, v_p(x) + v_p(\alpha) + v_p(\beta) + \min \{ h_{\alpha}, h_{\beta}\}-1\]
of $v_p(x \alpha \beta)$, but not the digit of valuation $v_p(\alpha) + v_p(\beta) + \min \{ h_{\alpha}, h_{\beta}\}$.
Similarly, we know the digits of valuation
\[v_p(y) + v_p(\alpha), \ldots, v_p(y) + v_p(\alpha) + h_{\alpha}- 1\]
of $v_p(y \alpha)$, but not the digit of valuation $v_p(y) + v_p(\alpha) + h_{\alpha}$ and the digits of valuation \[v_p(z)+v_p(\beta), \ldots, v_p(z) + v_p(\beta)+h_\beta-1\]
of $v_p(z \beta)$ but not the digit of valuation $v_p(z) + v_p(\beta) + h_\beta$ of $v_p(z \beta)$. Therefore, we are able to determine any digit from $v$ to $M-1$, that is a total of $k=M-v$ digits.
\end{proof}

As a corollary, we identify the conditions under which no $p$-adic digit of the bilinear transformation can be computed.

\begin{Corollary}
Let $x,y,z,t \in \Q$ and $\alpha, \beta \in \Q_p$ and let
\begin{equation*} \gamma = x \alpha \beta + y \alpha + z \beta + t.\end{equation*}
Let $-r_\alpha \le 0$ and $-r_\beta \le 0$ the valuation of $\alpha$ and $\beta$ respectively.
We know $\lfloor \alpha \rfloor_p$ and $\lfloor \beta \rfloor_p$. We know no digits of $\gamma$ if and only if \[v_p(\gamma) > \min \{ v_p(x) - \max \{ r_\alpha, r_\beta \}, v_p(y) , v_p(z) \}.\]
\begin{proof}
The result follows immediately by substituting $h_\alpha = r_\alpha + 1$ and $h_\beta = r_\beta + 1$ into the expressions for $v$ and $M$ in Proposition~\ref{Prop: bilnum}.
\end{proof}
\end{Corollary}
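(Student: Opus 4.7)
The plan is to reduce this corollary to a direct substitution in Proposition~\ref{Prop: bilnum} and then to rewrite the resulting inequality in terms of $v_p(\gamma)$ rather than in terms of the auxiliary quantity $v$ used in that proposition.

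First, I would substitute $h_\alpha = r_\alpha + 1$ and $h_\beta = r_\beta + 1$, so that the partial sums $\sum_{n=-r_\alpha}^{-r_\alpha+h_\alpha-1} c_n p^n$ and $\sum_{n=-r_\beta}^{-r_\beta+h_\beta-1} d_n p^n$ become exactly $\lfloor \alpha \rfloor_p$ and $\lfloor \beta \rfloor_p$. With this substitution, the quantity $M$ in Proposition~\ref{Prop: bilnum} simplifies using the identity
\[
-r_\alpha - r_\beta + \min\{r_\alpha+1, r_\beta+1\} = -\max\{r_\alpha, r_\beta\} + 1,
\]
and hence
\[
M = \min\{v_p(x) - \max\{r_\alpha, r_\beta\},\, v_p(y),\, v_p(z)\} + 1.
\]
By Proposition~\ref{Prop: bilnum}, the condition ``no digit of $\gamma$ can be determined'' is equivalent to $k = M - v \le 0$, i.e.\ $v \ge M$, where $v = v_p\bigl(x\lfloor\alpha\rfloor_p\lfloor\beta\rfloor_p + y\lfloor\alpha\rfloor_p + z\lfloor\beta\rfloor_p + t\bigr)$.

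The remaining step is to replace $v$ by $v_p(\gamma)$ in this inequality. Writing $\alpha = \lfloor\alpha\rfloor_p + \alpha'$ and $\beta = \lfloor\beta\rfloor_p + \beta'$ with $v_p(\alpha') \ge -r_\alpha + h_\alpha = 1$ and $v_p(\beta') \ge 1$, we can decompose
\[
\gamma = \bigl(x\lfloor\alpha\rfloor_p\lfloor\beta\rfloor_p + y\lfloor\alpha\rfloor_p + z\lfloor\beta\rfloor_p + t\bigr) + R,
\]
where, by expanding and applying the ultrametric inequality to each term involving $\alpha'$ or $\beta'$, one checks that $v_p(R) \ge M$. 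Consequently, if $v < M$ then $v_p(\gamma) = v < M$, while if $v \ge M$ then $v_p(\gamma) \ge M$. Thus $v \ge M$ if and only if $v_p(\gamma) \ge M$, and this is in turn equivalent to $v_p(\gamma) > M - 1 = \min\{v_p(x) - \max\{r_\alpha, r_\beta\},\, v_p(y),\, v_p(z)\}$, proving the corollary.

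The only mildly delicate point is the valuation bound $v_p(R) \ge M$: it requires matching each of the three bounds inside the minimum in $M$ with the corresponding cross-term ($x\alpha'\beta' + x\lfloor\alpha\rfloor_p\beta' + x\alpha'\lfloor\beta\rfloor_p$ for the $v_p(x)$-bound, $y\alpha'$ for the $v_p(y)$-bound, and $z\beta'$ for the $v_p(z)$-bound), which is routine but must be done carefully to match the expression $v_p(x) - \max\{r_\alpha, r_\beta\}$ with the minimal valuation actually produced by the $x\alpha\beta$ product.
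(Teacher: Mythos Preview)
Your proof is correct and follows the same approach as the paper: substitute $h_\alpha = r_\alpha+1$ and $h_\beta = r_\beta+1$ into Proposition~\ref{Prop: bilnum}. The paper's proof is a single sentence to this effect, leaving implicit the passage from the truncated valuation $v = v_p(x\lfloor\alpha\rfloor_p\lfloor\beta\rfloor_p + y\lfloor\alpha\rfloor_p + z\lfloor\beta\rfloor_p + t)$ to $v_p(\gamma)$; you make this step explicit via the decomposition $\gamma = (\text{truncated expression}) + R$ with $v_p(R)\ge M$, which is a useful clarification the paper does not spell out.
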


\begin{Remark}
Without loss of generality, we may assume that $v_p(\alpha) \le 0$ and $v_p(\beta) \le 0$, so that $v_p(\alpha) = v_p(\lfloor \alpha \rfloor_p)$ and $v_p(\beta) = v_p(\lfloor \beta \rfloor_p)$. Indeed, if the continued fraction expansion of $\alpha$ and $\beta$ are given by 
\[\alpha = [a_0,a_1,a_2, \ldots]\]
and
\[\beta = [b_0,b_1,b_2, \ldots],\]
then $v_p(a_i)<0$ and $v_p(b_i)<0$ for all $i \ge 1$. Therefore, after applying an input transformation to both $\alpha$ and $\beta$, we can reduce to the case where $v_p(\alpha)<0$ and $v_p(\beta)<0$.
\end{Remark}

The following theorem provides the necessary and sufficient conditions for the determination of the floor function of the bilinear fractional transformation of $\alpha$ and $\beta$, given its rational coefficients and $\lfloor \alpha \rfloor_p$ and $\lfloor \beta \rfloor_p$.

\begin{Theorem}\label{thmfloorBilinear}
Let $\alpha, \beta \in \Q_p$ and $x,y,z,t,e,f,g,h \in \Q$, with
\[\mathrm{rank}\begin{pmatrix}
    x & y & z & t\\
    e & f & g & h
\end{pmatrix}=2,\]
 and such that $v_p(\alpha)=-r_\alpha\leq 0$, $v_p(\beta)=-r_\beta\leq 0$. Let us suppose that $\lfloor\alpha\rfloor_p=\sum\limits_{n=-r_\alpha}^{0}c_np^n$ and $\lfloor\beta\rfloor_p=\sum\limits_{n=-r_\beta}^{0}d_np^n$ are known. Let us denote
\begin{equation}\label{Eq: bihomo}
\gamma = \frac{x \alpha \beta + y \alpha + z \beta + t}{e \alpha \beta + f \alpha + g \beta + h}.
\end{equation}
Let
\begin{align*}
v^{(1)}&=v_p(x\lfloor\alpha\rfloor_p\lfloor\beta\rfloor_p+y\lfloor\alpha\rfloor_p+z\lfloor\beta\rfloor_p+t),\\
v^{(2)}&=v_p(e\lfloor\alpha\rfloor_p\lfloor\beta\rfloor_p+f\lfloor\alpha\rfloor_p+g\lfloor\beta\rfloor_p+h),
\end{align*}
and
\begin{align*}
M_{num} &= \min \{ v_p(x) - \max \{ r_\alpha, r_\beta \}, v_p(y) , v_p(z) \},\\
M_{den} &=  \min \{ v_p(e) - \max \{ r_\alpha, r_\beta \}, v_p(f) , v_p(g) \}.
\end{align*}
If either
\[k_{num} = v^{(1)} - M_{num} \le 0 \quad \text{or} \quad k_{den}=v^{(2)} - M_{den} \le 0, \]
then it is not possible to determine any digit of $\gamma$ and of its floor function $\lfloor\gamma\rfloor_p$.
Otherwise, the $p$-adic floor function $\lfloor\gamma\rfloor_p$ is uniquely determined by $\lfloor\alpha\rfloor_p$ and $\lfloor\beta\rfloor_p$ if and only if 
\begin{equation}\label{Eq: outcondbili}
\min \{ k_{num}, k_{den}\} \ge v^{(2)}-v^{(1)}+1.
\end{equation}

In particular, in this case,
\begin{equation*}
\lfloor\gamma\rfloor_p=\left\lfloor\frac{x \alpha \beta + y \alpha + z \beta + t}{e \alpha \beta + f \alpha + g \beta + h}\right\rfloor_p=\left\lfloor\frac{x\lfloor\alpha\rfloor_p\lfloor\beta\rfloor_p+y\lfloor\alpha\rfloor_p+z\lfloor\beta\rfloor_p+t}{e\lfloor\alpha\rfloor_p\lfloor\beta\rfloor_p+f\lfloor\alpha\rfloor_p+g\lfloor\beta\rfloor_p+h}\right\rfloor_p.
\end{equation*}
\end{Theorem}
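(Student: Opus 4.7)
The plan is to mirror the proof of Theorem~\ref{thmfloor}, replacing Proposition~\ref{Pro: digitsum} by its bilinear analog Proposition~\ref{Prop: bilnum}. Since $\lfloor\alpha\rfloor_p$ determines the first $r_\alpha + 1$ digits of $\alpha$ and $\lfloor\beta\rfloor_p$ the first $r_\beta + 1$ digits of $\beta$, applying Proposition~\ref{Prop: bilnum} separately to the numerator $N = x\alpha\beta + y\alpha + z\beta + t$ and to the denominator $D = e\alpha\beta + f\alpha + g\beta + h$ (with $h_\alpha = r_\alpha + 1$ and $h_\beta = r_\beta + 1$) yields the thresholds $M_{num}$ and $M_{den}$ appearing in the statement, as well as the corresponding numbers $k_{num}$ and $k_{den}$ of recoverable coefficients. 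When $k_{num}$ (respectively $k_{den}$) indicates that not a single digit of $N$ (resp.\ $D$) can be pinned down, no digit of $\gamma$ can be either, which settles the first half of the claim.

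In the complementary regime, the first step is to check that the valuations $v_p(N) = v^{(1)}$ and $v_p(D) = v^{(2)}$ computed from the known truncations coincide with the true $p$-adic valuations of $N$ and $D$. This is the bilinear analog of Remark~\ref{Rem: pathological}: being outside the pathological regime rules out the existence of a hidden cancellation that would push either valuation beyond what the known part already predicts. Writing $N = \sum_{n \ge v^{(1)}} \ell_n p^n$ and $D = \sum_{n \ge v^{(2)}} m_n p^n$, the first $k_{num}$ and $k_{den}$ coefficients respectively are determined by $\lfloor\alpha\rfloor_p$ and $\lfloor\beta\rfloor_p$, and no coefficient beyond this depth is.

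Next, Lemma~\ref{Lem: inverse} converts the known digits of $D$ into $k_{den}$ known digits of $1/D$, and Lemma~\ref{Lem: prodab} applied to the product $\gamma = N \cdot (1/D) = \sum_{n \ge v^{(1)} - v^{(2)}} s_n p^n$ produces exactly $\min\{k_{num}, k_{den}\}$ determinable coefficients $s_n$. To extract $\lfloor\gamma\rfloor_p = \sum_{n = v^{(1)} - v^{(2)}}^{0} s_n p^n$ one needs these coefficients to cover every index in $\{v^{(1)} - v^{(2)}, \ldots, 0\}$, which translates precisely into
\[\min\{k_{num}, k_{den}\} \ge v^{(2)} - v^{(1)} + 1.\]
Under this inequality each intermediate computation uses only the known digits, so that substituting $\lfloor\alpha\rfloor_p$ and $\lfloor\beta\rfloor_p$ in place of $\alpha$ and $\beta$ throughout produces the explicit closed form claimed.

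The main technical hurdle is the bilinear carry-over analysis: verifying stability of $v^{(1)}$ and $v^{(2)}$ outside the pathological regime requires a case split on which of the three terms appearing in $M_{num}$ (and in $M_{den}$) realises the minimum, since each case controls which unknown digit of $\alpha$ or $\beta$ could, in principle, perturb the leading coefficient of $N$ (resp.\ $D$). Once this stability is established, the rest of the argument is a direct concatenation of Proposition~\ref{Prop: bilnum}, Lemma~\ref{Lem: inverse}, and Lemma~\ref{Lem: prodab}, exactly paralleling the end of the proof of Theorem~\ref{thmfloor}.
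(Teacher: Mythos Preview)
Your proposal is correct and follows essentially the same route as the paper: apply Proposition~\ref{Prop: bilnum} to numerator and denominator with $h_\alpha=r_\alpha+1$, $h_\beta=r_\beta+1$, then chain Lemma~\ref{Lem: inverse} and Lemma~\ref{Lem: prodab} exactly as in the proof of Theorem~\ref{thmfloor}. The paper's proof is terser (it just says ``by the results of Section~\ref{Sec: aux}'' where you name the two lemmas), but the argument is the same.

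One remark: the ``main technical hurdle'' you flag---the stability of $v^{(1)}$ and $v^{(2)}$ outside the pathological regime---does not actually require a case split on which term realises $M_{num}$ or $M_{den}$. It is automatic from Proposition~\ref{Prop: bilnum} itself: once $k_{num}>0$, the proposition guarantees that the coefficient of $p^{v^{(1)}}$ in $N$ is determined by the truncated data, and since $v^{(1)}$ is by definition the valuation of the truncation, that coefficient is nonzero. The unknown contributions enter only at powers $\ge M_{num}+1>v^{(1)}$, so they cannot perturb the leading term, and $v_p(N)=v^{(1)}$ follows directly. The paper uses this implicitly when it writes ``the $p$-adic valuation of $\gamma$ is $v^{(1)}-v^{(2)}$''. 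So your plan is sound, but you can drop the case analysis you anticipate.
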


\begin{proof}
The proof of this result uses a similar argument to that of Theorem~\ref{thmfloor} for the M\"obius transformation. We use Proposition~\ref{Prop: bilnum} to understand how many $p$-adic digits of $\gamma$ can be determined from the knowledge of $\lfloor\alpha\rfloor_p$ and $\lfloor \beta \rfloor_p$, and then we check whether this number is greater than the valuation of $\lfloor\gamma\rfloor_p$. If this is the case, then the partial quotient $\lfloor\gamma\rfloor_p$ is computed correctly. In order to compute the number of digits that we are able to determine for the numerator and the denominator of \eqref{Eq: bihomo}, we apply Proposition~\ref{Prop: bilnum}. In this case, $h_\alpha=r_\alpha+1$ and $h_\beta=r_\beta+1$. 
Therefore, we are able to determine
\begin{align*}
k_{num} &=v^{(1)} - M_{num},\\
k_{den} &=v^{(2)} - M_{den},
\end{align*}
$p$-adic digits of, respectively, the numerator and the denominator of $\gamma$. If $k_{num} \le 0$ or $k_{den} \le 0$ we cannot determine any digit of $\gamma$. Otherwise, by the results of Section~\ref{Sec: aux}, we can compute
\[k=\min\{k_{num},k_{den}\}\]
digits of $\eqref{Eq: bihomo}$. Since the $p$-adic valuation of $\gamma$ is $v^{(1)}-v^{(2)}$, we know the digits up to index $v^{(1)}-v^{(2)} + k-1$. Therefore, are able to determine $\lfloor \gamma \rfloor_p$ if and only if
\[ v^{(1)}-v^{(2)} + k-1 \ge 0,\]
and the claim is proved.
\end{proof}

\begin{Remark}\label{rem: outputcond}
The output condition~\eqref{Eq: outcondbili} of Theorem~\ref{thmfloorBilinear} heavily simplifies whenever we are in the good case where
\begin{align*}
v_p(x_n\beta_{n_\beta})<v_p(y_n), & \ & 
v_p(z_n\beta_{n_\beta})<v_p(t_n), & \ & 
v_p(x_n\alpha_{n_\alpha})<v_p(z_n),\\
v_p(e_n\beta_{n_\beta})<v_p(f_n), & \ &
v_p(g_n\beta_{n_\beta})<v_p(h_n), & \ &
v_p(e_n\alpha_{n_\alpha})<v_p(g_n), 
\end{align*}
for some $n\in\N$ and $n_\alpha+n_\beta=n$. This is the situation after $n_\alpha$ $\alpha$-input transformations and $n_\beta$ $\beta$-input transformations. In fact, in this case,
\begin{align*}
v_p\left(x_n \lfloor \alpha_{n_\alpha} \rfloor_p \lfloor \beta_{n_\beta} \rfloor_p  + y_n \lfloor \alpha_{n_\alpha} \rfloor_p + z_n \lfloor \beta_{n_\beta} \rfloor_p + t_n\right)&=v_p(x_n\lfloor \alpha_{n_\alpha} \rfloor_p \lfloor \beta_{n_\beta} \rfloor_p),\\
v_p\left(e_n \lfloor \alpha_{n_\alpha} \rfloor_p \lfloor \beta_{n_\beta} \rfloor_p  + f_n \lfloor \alpha_{n_\alpha} \rfloor_p + g_n \lfloor \beta_{n_\beta} \rfloor_p + h_n\right)&=v_p(e_n\lfloor \alpha_{n_\alpha} \rfloor_p \lfloor \beta_{n_\beta} \rfloor_p).
\end{align*}
We do not require any hypothesis on the valuation $v_p(y_n\alpha_{n_\alpha}+t_n)$ and $v_p(f_n\alpha_{n_\alpha}+h_n)$, as they do not give any contribution for the total valuation.
Moreover, as proved in Proposition~\ref{Prop: transitions}, once these conditions hold for some $\overline{n}$, they hold for any $n\geq\overline{n}$, either performing input $\alpha$-input or $\beta$-input transformations. In this case, $v_n^{(2)}-v_n^{(1)}$ becomes constant and it is, for all $n\geq\overline{n}$ \[v_n^{(2)}-v_n^{(1)}=v_p(e_{\overline{n}})-v_p(x_{\overline{n}}).\]
Moreover, using the notation of Theorem~\ref{thmfloorBilinear},
\[k_{num}=k_{den}=\min\{v_p(\alpha_{n_\alpha}),v_p(\beta_{n_\beta}) \}-v_p(\alpha_{n_\alpha})-v_p(\beta_{n_\beta})+1.\]
This means that, for $n\geq \overline{n}$, we are able to perform the output if and only if
\begin{equation}\label{Eq: conrarb}
\min\{-v_p(\alpha_{n_\alpha}),-v_p(\beta_{n_\beta})\}\geq v_p(e_{\overline{n}})-v_p(x_{\overline{n}}).
\end{equation}
This implies that, once the target valuation $v_p(e_{\overline{n}})-v_p(x_{\overline{n}})$ is fixed, we are able to perform output provided that both the valuations of the partial quotients of $\alpha$ and $\beta$ are ``sufficiently negative".
\end{Remark}

\subsection{Input transformation}\label{Sec: inptrans}
As we have seen in Section~\ref{Sec: bilinear}, in the case of the bilinear fractional transformation we can exploit both the partial quotients of $\alpha$ and $\beta$, therefore we have two different input transformations. In this section, we are going to examine how the input transformations can be exploited to eventually satisfy the hypotheses of Theorem~\ref{thmfloorBilinear}, hence allowing to compute
\[\lfloor\gamma\rfloor_p=\left\lfloor\frac{x\alpha\beta+y\alpha+z\beta+t}{e\alpha\beta+f\alpha+g\beta+h}\right\rfloor_p.\]
In this case as well, unlike the real case discussed in Section~\ref{Sec: bilinear}, it is not guaranteed that the hypotheses of Theorem~\ref{thmfloorBilinear} are eventually satisfied after a finite number of input transformations. The input transformations are the same as those in Section~\ref{Sec: bilinear}, but we rewrite them in the form of \eqref{Eq: bilform}. Given $\alpha, \beta \in \Q_p$, with $\alpha = [a_0,a_1,a_2, \ldots]$ and $\beta = [b_0,b_1,b_2, \ldots]$, we start from
\begin{equation}\label{Eq: bilgamma}
\gamma=\frac{x\alpha\beta+y\alpha+z\beta+t}{e\alpha\beta+f\alpha+g\beta+h}=\frac{\alpha(x\beta+y)+z\beta+t}{\alpha(e\beta+f)+g\beta+h}.
\end{equation}
Performing one $\alpha$-input transformation, i.e. substituting $\alpha=a_0+\frac{1}{\alpha_1}$ in \eqref{Eq: bilgamma}, we obtain
\begin{equation}\label{Eq: inputalpha}
\gamma=\frac{\alpha_1((xa_0+z)\beta+(ya_0+t))+x\beta+y}{\alpha_1((ea_0+g)\beta+(fa_0+h))+e\beta+f}.
\end{equation}
Performing one $\beta$-input transformation, i.e. substituting $\beta=b_0+\frac{1}{\beta_1}$ in \eqref{Eq: bilgamma}, we obtain
\begin{equation}\label{Eq: inputbeta}
\gamma=\frac{\alpha((xa_0+y)\beta_1+x)+(za_0+t)\beta_1+z}{\alpha((ea_0+f)\beta_1+e)+(ga_0+h)\beta_1+g}.
\end{equation}
The first thing that it is possible to notice is that, after performing $n$ consecutive $\beta$-input transformations, we get 
\begin{equation}\label{Eq: inputbetaN}
\gamma=\frac{\alpha(x_n\beta_n+y_n)+z_n\beta_n+t_n}{\alpha(e_n\beta_n+f_n)+g_n\beta_n+h_n},
\end{equation}
where we are using the notation of Definition~\ref{Def: recur}. It follows by Corollary~\ref{Cor: valunique} and Proposition~\ref{Prop: tolet} that, if the quantities
\[x\beta+y, \quad  z\beta+t, \quad e\beta+f, \quad  g\beta+h\]
are all different from zero, then there exists $\overline{n}\geq0$ such that, for all $n\geq \overline{n}$, \eqref{Eq: inputbetaN} holds with
\begin{align*}
v_p(x_nb_n)&<v_p(y_n),\\
v_p(z_nb_n)&<v_p(t_n),\\
v_p(e_nb_n)&<v_p(f_n),\\
v_p(g_nb_n)&<v_p(h_n).
\end{align*}
Let us assume that we start from this situation. In the remaining cases, that are,
\[\beta\in\left\{-\frac{y}{x},-\frac{t}{z},-\frac{f}{e},-\frac{h}{g}\right\},\]
the bilinear transformation simplifies and it is easier to study, as either the numerator or the denominator reduce to a linear transformation (possibly multiplied by $\alpha$) as studied in Section~\ref{Sec: Mobpadic}. For the same reason, we also assume $\alpha\not\in\{-\frac{z}{x},-\frac{t}{y},-\frac{g}{e},-\frac{h}{f}\}$. For simplicity, from now on we work only with the numerator of the bilinear fractional transformation, but the same arguments hold for the denominator. We have seen in Remark~\ref{rem: outputcond} that the optimal situation is whenever
\begin{equation}\label{Eq: optimalineq}
v_p(x_nb_n)<v_p(y_n), \quad
v_p(z_nb_n)<v_p(t_n), \quad
v_p(x_na_n)<v_p(z_n),
\end{equation}
for some $n\in\N$. In fact, in this case, the condition to compute the floor function of the transformation significantly simplifies in \eqref{Eq: conrarb}. Notice also that, since in this case we have
\[v_p(x_n\alpha\beta+y_n\alpha+z_n\beta+t_n)=v_p(x_n\alpha\beta),\]
we do not care about the behavior of $v_p(y_na_n+t_n)$.
First, we show that whenever no carry-over occurs in $x_n\alpha+z_n$, one $\alpha$-input transformation always yields the desired situation \eqref{Eq: optimalineq}.

\begin{Proposition}\label{Prop: transitions}
Let us suppose that:
\begin{itemize}
    \item $v_p(x_n\beta)<v_p(y_n)$, 
    \item $v_p(z_n\beta)<v_p(t_n)$,
    \item $v_p(x_n\alpha+z_n)=\min\{v_p(x_n\alpha),v_p(z_n)\}$.
\end{itemize}
Then, after an $\alpha$-input transformation, we have:
\begin{enumerate}
    \item[i)] $v_p(x_{n+1}\beta)<v_p(y_{n+1})$,  
    \item[ii)] $v_p(z_{n+1}\beta)<v_p(t_{n+1})$,
   \item[iii)] $v_p(x_{n+1}\alpha)<v_p(z_{n+1})$.    
\end{enumerate}
\begin{proof}
By Lemma~\ref{Lem: rec}, condition $iii)$ is satisfied. Condition $ii)$ holds, since
\[v_p(z_{n+1}\beta)=v_p(x_{n}\beta)<v_p(y_{n})=v_p(t_{n+1}).\]
It remains to show that $i)$ is satisfied, which means showing that
\begin{equation*}
v_p(x_{n+1}\beta)=v_p((x_na+z_n)\beta)=\min\{v_p(x_na_n\beta),v_p(z_n\beta)\},
\end{equation*}
is less than
\begin{equation*}
v_p(y_{n+1})=v_p(y_na+t_n)\geq\min\{v_p(y_na),v_p(t_n)\},
\end{equation*}
where $a=\lfloor\alpha\rfloor_p$. If $v_p(x_na+z_n)=v_p(x_na)$, then
\[v_p((x_na+z_n)\beta)=v_p(x_na\beta)\leq v_p(z_n\beta)<v_p(t_n),\]
and
\[v_p((x_na+z_n)\beta)=v_p(x_na\beta)<v_p(y_n a).\]
Therefore,
\[v_p((x_na+z_n)\beta)<v_p(y_na+t_n),\]
and the claim holds in this case. Instead, if $v_p(x_na+z_n)=v_p(z_n)$, then
\[v_p((x_na+z_n)\beta)=v_p(z_n\beta)<v_p(t_n),\]
and\[v_p((x_na+z_n)\beta)=v_p(z_n\beta)\leq v_p(x_na\beta)<v_p(y_na),\]
which proves condition $i)$.
\end{proof}
\end{Proposition}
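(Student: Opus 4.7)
The plan is to write out how the coefficients transform under an $\alpha$-input and then verify the three conclusions in turn. From Equation~\eqref{Eq: inputalpha}, setting $a=\lfloor\alpha\rfloor_p$ and writing $\alpha'$ for the next complete quotient, the numerator coefficients update as
\[
x_{n+1}=x_na+z_n,\quad y_{n+1}=y_na+t_n,\quad z_{n+1}=x_n,\quad t_{n+1}=y_n,
\]
while the role of $\alpha$ in the statement is taken over by $\alpha'$.

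I would dispose of (iii) first. The pair $(x_m,z_m)$ obeys exactly the recurrence of Definition~\ref{Def: recur} with $z$ in place of $y$. Because $v_p(a)<0<v_p(1/\alpha')$ one has $v_p(x_n\alpha)=v_p(x_na)$, so the third hypothesis becomes $v_p(x_na+z_n)=\min\{v_p(x_na),v_p(z_n)\}=\mu_n$, which is the base case of the hypothesis of Lemma~\ref{Lem: rec}. The lemma then yields $v_p(x_{n+1}\alpha')<v_p(z_{n+1})$, giving (iii). Conclusion (ii) is immediate: $v_p(z_{n+1}\beta)=v_p(x_n\beta)<v_p(y_n)=v_p(t_{n+1})$, by the first hypothesis.

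The only nontrivial step is (i), namely $v_p((x_na+z_n)\beta)<v_p(y_na+t_n)$. I would split into two cases according to whether $v_p(x_na+z_n)$ equals $v_p(x_na)$ or $v_p(z_n)$. In the first case, shifting the first hypothesis by $v_p(a)$ gives $v_p(x_na\beta)<v_p(y_na)$, while $v_p(x_na)\le v_p(z_n)$ combined with the second hypothesis gives $v_p(x_na\beta)\le v_p(z_n\beta)<v_p(t_n)$; hence $v_p((x_na+z_n)\beta)<\min\{v_p(y_na),v_p(t_n)\}\le v_p(y_na+t_n)$. The case $v_p(x_na+z_n)=v_p(z_n)$ is symmetric, using $v_p(z_n)\le v_p(x_na)$ and invoking the second hypothesis directly to bound $v_p(z_n\beta)<v_p(t_n)$ and $v_p(z_n\beta)\le v_p(x_na\beta)<v_p(y_na)$. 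I do not foresee a serious obstacle here: both hypotheses are strict, so the strict inequalities propagate through the minima; the only care required is to invoke the correct hypothesis in each case.
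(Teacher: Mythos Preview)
Your proposal is correct and follows essentially the same approach as the paper: Lemma~\ref{Lem: rec} for (iii), the one-line substitution for (ii), and the two-case split on whether $v_p(x_na+z_n)$ equals $v_p(x_na)$ or $v_p(z_n)$ for (i). Your write-up is in fact slightly more explicit than the paper's in justifying the passage from $v_p(x_n\alpha)$ to $v_p(x_na)$ before invoking Lemma~\ref{Lem: rec}; the only small quibble is that $v_p(a)$ need only be $\le 0$ rather than strictly negative, but since $v_p(\alpha-a)\ge 1$ the conclusion $v_p(x_n\alpha)=v_p(x_na)$ holds regardless.
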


However, in the next example we show that if carry-overs occur in $x\alpha+z$, then performing an $\alpha$-input transformation may no longer satisfy the valuation conditions for $x_1\beta$ and $y_1$.

\begin{Example}\label{Rem: namefin}
If in Proposition~\ref{Prop: transitions},
\[v_p(x_n\alpha+z_n)>\min\{v_p(x_n\alpha),v_p(z_n)\},\]
then the claim does not necessarily holds. Let $p=5$ and consider

\[(x,y,z,t)=\left(\frac{1}{5},\frac{1}{25},\frac{24}{625},\frac{1}{5}\right), \quad a= \frac{1}{125}, \quad b=\frac{1}{125}.\]
Then, $v_5(\beta)=-2$ and 
\[v_5(x\beta)<v_5(y), \quad v_5(z\beta)<v_5(t),\]
but
\[v_5(x_1\beta)=v_5(x\alpha+z)+v_5(\beta)>v_5(ya+t)=v_5(y_1).\]
\end{Example}

\begin{Remark}
Under the hypotheses of Proposition~\ref{Prop: transitions}, it is guaranteed that no carry-over occurs in the whole transformation. In fact, in this case,
\[v_p(\alpha(x\beta+y)+(z\beta+t))=v_p(\beta (x\alpha+z))=v_p(\beta)+\min\{v_p(x\alpha),v_p(z)\}.\]
Therefore, a problem arises only in the case when carry-overs occur in $x\alpha+z$. 
In general, if $v_p(x\beta + y) = v_p(x\beta)$ and $v_p(z\beta + t) = v_p(z\beta)$, the total valuation of the bilinear transformation depends solely on $v_p(x\alpha + z)$. 
\end{Remark}

The conditions on $x\beta+y$ and $z\beta+t$ in Proposition~\ref{Prop: transitions} cannot be weakened, as shown in the next remark.

\begin{Remark}
In general, if
\begin{enumerate}
    \item[i)] $v_p(x_n\beta+y_n)=\min\{v_p(x_n\beta),v_p(y_n)\}$,
    \item[ii)] $v_p(z_n\beta+t_n)=\min\{v_p(z_n\beta),v_p(t_n)\}$,
    \item[iii)] $v_p(x_n\alpha+z_n)=\min\{v_p(x_n\alpha),v_p(z_n)\}$,
    \item[iv)] $v_p(y_n\alpha+t_n)=\min\{v_p(y_n\alpha),v_p(t_n)\}$.
\end{enumerate}
then we are not guaranteed that after an $\alpha$-input transformation the same holds. In fact, $iii)$ and $iv)$ hold by Lemma~\ref{Lem: rec}, and $ii)$ holds since
\[v_p(z_{n+1}\beta+t_{n+1})=v_p(x_n\beta+y_n)=\min\{v_p(x_n\beta),v_p(y_n) \}=\min\{v_p(z_{n+1}\beta),v_p(t_{n+1}) \}.\]
However, $i)$ can be not satisfied. Let $p=5$, $\beta=\frac{1}{5}+\cdots$, and consider
\[(x_n,y_n,z_n,t_n,a)=\left(\frac{1}{5},\frac{1}{5},\frac{22}{25},\frac{102}{125},\frac{1}{5}\right).\]
It is not hard to verify that all the four conditions $i)-iv)$ are satisfied at the $n$-th step, but
\begin{align*}
v_5(x_{n+1}\beta+y_{n+1})=-2>\min\{v_5(x_{n+1}\beta),v_5(y_{n+1})\}=-3,
\end{align*}
so that condition $i)$ does not hold at the next step.
\end{Remark}

At this point, we have a good candidate for a strategy to compute the $p$-adic continued fraction of the bilinear fractional transformation:
\begin{itemize}
    \item Perform $\beta$-input transformation until \begin{equation}\label{Eq: hpgoodcase}v_p(x_nb_n)<v_p(y_n), \ 
v_p(z_nb_n)<v_p(t_n), \  v_p(e_nb_n)<v_p(f_n), \  
v_p(g_nb_n)<v_p(h_n),\end{equation}

\item If no carry-over occurs in both $x_n\alpha+z_n$ and $e_n\alpha+g_n$, that is 
\begin{align*}
    v_p(x_n\alpha+z_n)&=\min\{v_p(x_n\alpha),v_p(z_n)\},\\
    v_p(e_n\alpha+g_n)&=\min\{v_p(e_n\alpha),v_p(g_n)\},
\end{align*}
then perform one $\alpha$-input transformation in order to have
\begin{equation}\label{Eq: Good1}\begin{aligned}
&v_p(x_{\overline{n}}\beta)<v_p(y_{\overline{n}}),  & 
&v_p(z_{\overline{n}}\beta)<v_p(t_{\overline{n}}),   &
&v_p(x_{\overline{n}}\alpha)<v_p(z_{\overline{n}}),\\
&v_p(e_{\overline{n}}\beta)<v_p(f_{\overline{n}}),   &
&v_p(g_{\overline{n}}\beta)<v_p(h_{\overline{n}}),   &
&v_p(e_{\overline{n}}\alpha)<v_p(g_{\overline{n}}).
\end{aligned}\end{equation}

\item Compute
\[u=v_p(e_{\overline{n}})-v_p(x_{\overline{n}}),\]
\item Perform $\alpha$-input transformations until $-v_p(\alpha_n)\geq u$.
\item Perform $\beta$-input transformations until $-v_p(\beta_n)\geq u$.
\item Now, condition~\eqref{Eq: conrarb} is satisfied and it is possible to compute \begin{equation*}
\lfloor\gamma\rfloor_p=\left\lfloor\frac{x \alpha \beta + y \alpha + z \beta + t}{e \alpha \beta + f \alpha + g \beta + h}\right\rfloor_p=\left\lfloor\frac{x\lfloor\alpha\rfloor_p\lfloor\beta\rfloor_p+y\lfloor\alpha\rfloor_p+z\lfloor\beta\rfloor_p+t}{e\lfloor\alpha\rfloor_p\lfloor\beta\rfloor_p+f\lfloor\alpha\rfloor_p+g\lfloor\beta\rfloor_p+h}\right\rfloor_p.
\end{equation*}

\end{itemize}

The only problem at this point arises when the following holds:
\begin{align}\label{Eq: ending}
v_p(x_n\alpha+z_n)&>\min\{v_p(x_n\alpha),v_p(z_n)\}.
\end{align}

\begin{Remark}
    
Notice that if \eqref{Eq: ending} holds, then we must perform an $\alpha$-input transformation and we cannot fix it by performing $\beta$-input transformation. In fact, after a $\beta$-input transformation,
\begin{align*}
v_p(x_{n+1}\alpha+z_{n+1})&=v_p((x_nb+y_n)\alpha+(z_nb+t_n))=v_p(b(x_n\alpha+z_n)+(y_n\alpha+t_n))\\
&>\min\{v_p(bx_n\alpha),v_p(y_n\alpha)\}=\min\{v_p(x_{n+1}\alpha),v_p(z_{n+1})\},
\end{align*}
therefore carry-overs still occur. In fact, whenever we know that 
\begin{align*}
v_p(x\beta+y)&=v_p(x\beta),\\
v_p(z\beta+t)&=v_p(z\beta),
\end{align*}
the valuation of the transformation depends only on $v_p(x\alpha+z)$.
\end{Remark}

As we have seen in Example~\ref{Rem: namefin}, whenever
\[v_p(x\alpha+z)>\min\{v_p(x\alpha),v_p(z)\},\]
it is possible that after an $\alpha$-input transformation,
carry-overs occur in $x\beta+y$. At this point, it would be possible to perform $\alpha$-input transformations until
\[v_p(x_n\alpha)<v_p(z_n), \quad v_p(y_n\alpha)<v_p(t_n),\]
and check whether
\begin{align*}
v_p(x_n\beta+y_n)&=\min\{v_p(x_n\beta),v_p(y_n)\}\\
v_p(z_n\beta+t_n)&=\min\{v_p(z_n\alpha),v_p(t_n)\}.
\end{align*}
If this holds, then after one $\beta$-input transformation we are in the good case. Otherwise, we exchange again the role of $\alpha$ and $\beta$. See Algorithm~\ref{Alg: Gosp2} for a pseudocode implementation of this procedure. It computationally appears (see Section~\ref{Sec: Computations}) that by performing this swap of the role of $\alpha$ and $\beta$ we always eventually end up in the good case where conditions~\eqref{Eq: Good1} are satisfied, but we were not able to prove it.

We end this section providing an example of output transformation for the bilinear fractional transformation.
\begin{Example}
If we are in the hypothesis of Equation~\eqref{Eq: hpgoodcase} and we are able to perform the output, then it is not guaranteed that these hypotheses will be satisfied after the output transformation. For example, let $p=5$ and $\alpha, \beta \in \Q_5$ with $\lfloor \alpha \rfloor_5=\lfloor \beta \rfloor_5= \frac{1}{5}$, and let us consider the following transformation
\[ \gamma = \frac{\left ( \frac{1}{5} +5 \right ) \alpha \beta + 25 \alpha + 25\beta + 25}{\frac{1}{5} \alpha \beta + \alpha + \beta + 3}.\]
Using the above notation, we have $x = \frac{1}{5} + 5$, $y = 25$, $z=25$, $t=25$, $e = \frac{1}{5}$, $f = 1$, $g=1$, and $h=3$. It can be verified that
\begin{align*}
&v_5(x\beta)<v_5(y),  & 
&v_5(z\beta)<v_5(t),   &
&v_5(x\alpha)<v_5(z),\\
&v_5(e\beta)<v_5(f),   &
&v_5(g\beta)<v_5(h),   &
&v_5(e\alpha)<v_5(g),
\end{align*}
and
$\min\{r_{\alpha},r_{\beta}\}=\min\{-v_5(\alpha),-v_5(\beta)\}\geq v_5(e)-v_5(x)$. Hence, we are able to extract the first partial quotient of $\gamma$, which is $l_0 = 1$.
After the output transformation, we have
\[ \gamma_1 = \frac{\frac{1}{5} \alpha \beta + \alpha + \beta + 3}{5 \alpha \beta + 24\alpha + 24 \beta +22} = \frac{\Bar{x} \alpha \beta + \Bar{y} \alpha + \Bar{z} \beta + \Bar{t}}{\Bar{e} \alpha \beta + \Bar{f} \alpha + \Bar{g} \beta + \Bar{h}}.\]
Since, $v_5(\Bar{e} \beta) =0= v_5(\Bar{f})$ and $v_5(\Bar{e} \alpha) =0= v_5(\Bar{g})$, conditions \eqref{Eq: hpgoodcase} are no more satisfied.
\end{Example}

\subsection{The algorithm for the bilinear fractional transformation}\label{Sec: thebilalgorithm}
In this section, we outline the pseudocode for the computation of the $p$-adic continued fraction of the bilinear fractional transformation. The strategy is exactly the one described at the end of Section~\ref{Sec: inptrans}. Algorithm~\ref{Alg: Gosp2} summarizes the procedure, and Section~\ref{Sec: compbil} presents the computational experiments. The SageMath implementation is publicly available\footnote{\href{https://github.com/giulianoromeont/p-adic-continued-fractions}{https://github.com/giulianoromeont/p-adic-continued-fractions}}.\bigskip

Similarly to the M\"obius transformation, for the bilinear fractional transformation the algorithm is guaranteed to provide the correct $p$-adic continued fraction for \textit{almost all} $p$-adic numbers. In fact, if, for some $\overline{n}\in\N$,
\begin{align*}
&v_p(x_{\overline{n}}\beta)<v_p(y_{\overline{n}}),  & 
&v_p(z_{\overline{n}}\beta)<v_p(t_{\overline{n}}),   &
&v_p(x_{\overline{n}}\alpha)<v_p(z_{\overline{n}}),\\
&v_p(e_{\overline{n}}\beta)<v_p(f_{\overline{n}}),   &
&v_p(g_{\overline{n}}\beta)<v_p(h_{\overline{n}}),   &
&v_p(e_{\overline{n}}\alpha)<v_p(g_{\overline{n}}),
\end{align*}
then the output condition becomes
\begin{equation}\label{Eq: bilcondab}
\min\{-v_p(a_n),-v_p(b_n)\}\geq v_p(e_{\overline{n}})-v_p(x_{\overline{n}}).
\end{equation}
Therefore, as we have seen in Section~\ref{Sec: themobalgo}, also in this case the strategy is to perform input transformations until $-v_p(a_n)\geq v_p(e_{\overline{n}})-v_p(x_{\overline{n}})$ and $-v_p(b_n)\geq v_p(e_{\overline{n}})-v_p(x_{\overline{n}})$. The fulfillment of this conditions only depends on the $p$-adic valuation of the partial quotients of $\alpha$ and $\beta$. If either the valuation $v_p(a_n)$ or $v_p(b_n)$ is bounded, condition~\eqref{Eq: bilcondab} is not guaranteed to be satisfied for some $n\ge \overline{n}$, and it may not be possible to determine the $p$-adic floor function of the transformation. However, by Corollary~\ref{Coro: unbounded}, we guaranteed that the $p$-adic valuation of the partial quotients for Ruban and \textit{Browkin I} is unbounded for almost all $\alpha,\beta\in\Q_p$, with respect to the Haar measure. Therefore, Algorithm~\ref{Alg: Gosp2} terminates providing the correct $p$-adic continued fraction expansion for almost all inputs $\alpha,\beta \in \Q_p$. Notice also that, whenever \eqref{Eq: bilcondab} is not satisfied, the floor function of the bilinear fractional transformation cannot be recovered, in any way, by the knowledge of arbitrarily many partial quotients of $\alpha$ and $\beta$.

\section{Extending the results for Algorithm~\eqref{Alg: MR}}
\label{Sec: otheralgo}
In this section, we extend the analysis developed in the previous sections for Ruban's and \textit{Browkin I} algorithms to Algorithm~\eqref{Alg: MR}, whose detailed description can be found in \cite{murru2024new}.
If $\alpha = [a_0,a_1, \ldots]$ is the expansion obtained by this algorithm, then the following inequalities hold:
\begin{equation}\label{Eq: mrvaluation}
v_p(a_{2n}) \le 0 \quad \text{and} \quad v_p(a_{2n+1}) < 0 \quad \forall \, n \ge 0.\end{equation}
Using Definition~\ref{Def: represet}, the partial quotients $a_{2n}\in\mathcal{B}$ and $a_{2n+1}\in\mathcal{T}$ for all $n\geq 0$. By construction, we know the first $r_{\alpha_n}+1$ digits of $\alpha_n$ if $n \equiv 0 \pmod{2}$ and the first $r_{\alpha_n}$ digits of $\alpha_n$ if $n \equiv 1 \pmod{2}$, where $-r_{\alpha} = v_p(\alpha_n)$.
By Proposition~\ref{Pro: digitsum}, no digits of $x_n \alpha_n + y_n$ are determined, given $a_n$, if and only if
\[\begin{cases}
v_p(x_n a_n + y_n)\ge v_p(x_n) +1 & \text{if }n \equiv 0 \pmod{2}\\
v_p(x_n a_n + y_n)\ge v_p(x_n) & \text{if }n \equiv 1 \pmod{2}\\
\end{cases}.\]

The analysis of the M\"obius and bilinear fractional transformations in the case of Algorithm~\eqref{Alg: MR} is very similar to the one we presented for Ruban and \textit{Browkin I} in Section~\ref{Sec: Mobpadic} and Section~\ref{Sec: bilinearfractional}. In particular, we apply the results from Section~\ref{Sec: aux} and Proposition~\ref{Pro: digitsum} to this specific case of the functions $s$ and $t$. We explicitly state the results concerning the M\"obius transformation, as those for the bilinear fractional transformation can be derived in an analogous way. First, in the next two theorems, we present the necessary and sufficient conditions under which, given $s(\alpha)$ and $t(\alpha)$, one can determine $s\left (\frac{x\alpha+y}{z\alpha+t} \right )$ and $t\left (\frac{x\alpha+y}{z\alpha+t}\right )$, respectively.

\begin{Theorem}\label{thmfloor MRs}
Let $\alpha\in\Q_p$ and $x,y,z,t\in\Q$ with $xt - yz \ne 0$, and set $\gamma = \frac{x\alpha+y}{z\alpha+t}$. Let $v_p(\alpha)=-r\leq0$ and suppose that $s(\alpha)=\sum\limits_{n=-r}^{0}c_np^n$ is known. 
If $s(\alpha)=\alpha$, then we are able to compute $s(\gamma)$ and $t(\gamma)$. Suppose $s(\alpha) \ne \alpha$.
If either \[v_p(xs(\alpha)+y)\geq v_p(x)+1,\] 
or
\[v_p(zs(\alpha)+t)\geq v_p(z)+1,\]
then one cannot determine either $s(\gamma)$ or $t(\gamma)$. Otherwise, let us denote
\begin{equation*}
\begin{aligned}
v^{(1)}_s&=v_p(xs(\alpha)+y),\\
v^{(2)}_s&=v_p(zs(\alpha)+t),
\end{aligned}
\end{equation*}
and let
\[m=\min\{v_p(x)-v^{(1)}_s,v_p(z)-v^{(2)}_s\}.\]
Then, $s(\gamma)$ is uniquely determined by $s(\alpha)$ if and only if $m\geq v^{(2)}_s-v^{(1)}_s$ and $t(\gamma)$ is uniquely determined by $s(\alpha)$ if and only if $m\geq v^{(2)}_s-v^{(1)}_s-1$. In particular, in these cases,
\[s \left (\frac{x\alpha+y}{z\alpha+t}\right )=s\left (\frac{x s(\alpha)+y}{z s(\alpha)+t}\right ) \quad \text{and} \quad t \left (\frac{x\alpha+y}{z\alpha+t}\right )=t\left (\frac{x s(\alpha)+y}{z s(\alpha)+t}\right ).\]
\end{Theorem}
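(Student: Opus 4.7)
The plan is to mirror the argument of Theorem~\ref{thmfloor}, adjusting only the final digit-counting step because $s$ extracts digits up to index $0$ while $t$ extracts digits only up to index $-1$. First, I would dispatch the trivial case $s(\alpha)=\alpha$: in that event $\alpha\in\Z\left[\frac{1}{p}\right]\cap\left(-\frac{p}{2},\frac{p}{2}\right)$ and $\gamma$ is an explicitly known rational, so both $s(\gamma)$ and $t(\gamma)$ can be read off directly. Assume from now on $s(\alpha)\neq\alpha$. Applying Remark~\ref{Rem: pathological} and Proposition~\ref{Pro: digitsum} to $x\alpha+y$ and $z\alpha+t$, either $v_p(xs(\alpha)+y)\ge v_p(x)+1$ or $v_p(zs(\alpha)+t)\ge v_p(z)+1$ forces the knowledge of $s(\alpha)$ to yield no digit of the numerator or the denominator, respectively, and hence no digit of $\gamma$. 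This settles the ``bad'' case.

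In the remaining case both valuations are bounded above by $v_p(x)$ and $v_p(z)$, so $v_p(x\alpha+y)=v_s^{(1)}$ and $v_p(z\alpha+t)=v_s^{(2)}$ exactly (no carry cancellation). Proposition~\ref{Pro: digitsum} then yields the first $k_1=v_p(x)-v_s^{(1)}+1$ digits of the numerator and the first $k_2=v_p(z)-v_s^{(2)}+1$ digits of the denominator. Invoking Lemma~\ref{Lem: inverse} on the denominator and then Lemma~\ref{Lem: prodab} on the product, exactly as in Theorem~\ref{thmfloor}, I obtain that we know the first $m+1$ coefficients of
\[
\gamma=\sum_{n=v_s^{(1)}-v_s^{(2)}}^{+\infty}s_n\,p^n,
\]
starting from the coefficient of $p^{v_s^{(1)}-v_s^{(2)}}$, where $m=\min\{v_p(x)-v_s^{(1)},v_p(z)-v_s^{(2)}\}$.

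The only point at which the proof diverges from that of Theorem~\ref{thmfloor} is the final threshold: to recover $s(\gamma)=\sum_{n=v_s^{(1)}-v_s^{(2)}}^{0}s_n p^n$ we must reach the index $0$, i.e.\ we need $v_s^{(1)}-v_s^{(2)}+m\ge 0$, which rearranges to $m\ge v_s^{(2)}-v_s^{(1)}$; to recover $t(\gamma)=\sum_{n=v_s^{(1)}-v_s^{(2)}}^{-1}s_n p^n$ it suffices to reach index $-1$, giving the weaker threshold $m\ge v_s^{(2)}-v_s^{(1)}-1$. In both cases the digits used come entirely from $s(\alpha)$, so the closed-form identities $s(\gamma)=s\!\left(\tfrac{xs(\alpha)+y}{zs(\alpha)+t}\right)$ and $t(\gamma)=t\!\left(\tfrac{xs(\alpha)+y}{zs(\alpha)+t}\right)$ follow.

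The only mildly delicate point, and the one I would write out carefully, is the sharpness: if $m=v_s^{(2)}-v_s^{(1)}-1$ then the coefficient $s_0$ is genuinely not determined by $s(\alpha)$ (so $s(\gamma)$ cannot be recovered), while the coefficients $s_{v_s^{(1)}-v_s^{(2)}},\dots,s_{-1}$ already are (so $t(\gamma)$ is). This requires exhibiting, for the impossibility half, a choice of the first undetermined digit of $\alpha$ that changes $s_0$ without affecting the lower-index coefficients; this is the same kind of perturbation argument implicit in the ``only if'' direction of Theorem~\ref{thmfloor}, and is the main obstacle to a fully rigorous statement but causes no new difficulties beyond the Ruban/\textit{Browkin I} case.
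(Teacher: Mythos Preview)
Your proposal is correct and follows essentially the same approach as the paper, which simply states that the proof proceeds as in Theorem~\ref{thmfloor} via Proposition~\ref{Pro: digitsum}, Lemma~\ref{Lem: inverse}, and Lemma~\ref{Lem: prodab}. Your additional remark on the differing thresholds for $s$ and $t$ (index $0$ versus index $-1$) makes explicit exactly the adjustment the paper leaves implicit.
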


\begin{proof}
The proof proceeds as in Theorem~\ref{thmfloor}, using Proposition~\ref{Pro: digitsum}, Lemma~\ref{Lem: inverse}, and Lemma~\ref{Lem: prodab}.\end{proof}

\begin{Theorem}\label{thmfloor MRt}
Let $\alpha\in\Q_p$ and $x,y,z,t\in\Q$ with $xt - yz \ne 0$, and set $\gamma = \frac{x\alpha+y}{z\alpha+t}$. Let $v_p(\alpha)=-r\leq0$ and suppose that $t(\alpha)=\sum\limits_{n=-r}^{-1}c_np^n$ is known. 
If $t(\alpha)=\alpha$, then we are able to compute $s(\gamma)$ and $t(\gamma)$. Suppose $t(\alpha) \ne \alpha$.
If either \[v_p(xt(\alpha)+y)\geq v_p(x),\] 
or
\[v_p(zt(\alpha)+t)\geq v_p(z),\]
then one cannot determine either $s(\gamma)$ or $t(\gamma)$. Otherwise, let us denote
\begin{equation*}
\begin{aligned}
v^{(1)}_t&=v_p(xt(\alpha)+y),\\
v^{(2)}_t&=v_p(zt(\alpha)+t),
\end{aligned}
\end{equation*}
and let
\[m=\min\{v_p(x)-v^{(1)}_t,v_p(z)-v^{(2)}_t\}.\]
Then, $s(\gamma)$ is uniquely determined by $t(\alpha)$ if and only if $m\geq v^{(2)}_t-v^{(1)}_t+1$ and $t(\gamma)$ is uniquely determined by $t(\alpha)$ if and only if $m\geq v^{(2)}_t-v^{(1)}_t$. In particular, in these cases,
\[s \left (\frac{x\alpha+y}{z\alpha+t}\right )=s\left (\frac{x t(\alpha)+y}{z t(\alpha)+t}\right ) \quad \text{and} \quad t \left (\frac{x\alpha+y}{z\alpha+t}\right )=t\left (\frac{x t(\alpha)+y}{z t(\alpha)+t}\right ).\]
\end{Theorem}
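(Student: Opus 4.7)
The plan is to mirror the argument used for Theorem~\ref{thmfloor}, adjusting carefully for the fact that $t(\alpha)$ reveals exactly $r$ digits of $\alpha$ (the coefficients of $p^{-r}, \ldots, p^{-1}$), which is one fewer than what $s(\alpha)$ provides. First I would dispose of the trivial case $t(\alpha) = \alpha$: then $\alpha$ is known exactly, so $\gamma$ and its images under $s$ and $t$ are immediately determined.

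Assuming $t(\alpha) \neq \alpha$, I would apply Proposition~\ref{Pro: digitsum} with $h = r$ to the numerator $x\alpha + y$ and the denominator $z\alpha + t$. This gives that the first $v_p(x) - v_p(xt(\alpha)+y)$ and $v_p(z) - v_p(zt(\alpha)+t)$ digits of these expressions, respectively, are determined. In particular, no digit of $x\alpha+y$ is recoverable precisely when $v_p(xt(\alpha)+y) \geq v_p(x)$, and symmetrically for the denominator; in either case $\gamma$ is completely indeterminate and neither $s(\gamma)$ nor $t(\gamma)$ can be obtained. This handles the first dichotomy in the theorem.

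In the non-pathological regime, Remark~\ref{Rem: pathological} (its direct analogue, replacing $s$ with $t$ and the bound $v_p(x)+1$ with $v_p(x)$) gives $v_p(x\alpha+y) = v_t^{(1)}$ and $v_p(z\alpha+t) = v_t^{(2)}$. Thus we know $k_1 = v_p(x) - v_t^{(1)}$ digits of $x\alpha+y$ and $k_2 = v_p(z) - v_t^{(2)}$ digits of $z\alpha+t$; by Lemma~\ref{Lem: inverse} the first $k_2$ digits of $\frac{1}{z\alpha+t}$ follow, and Lemma~\ref{Lem: prodab} applied to the product then yields exactly $m = \min\{k_1, k_2\}$ digits of $\gamma$, starting from the digit of $p^{v_t^{(1)} - v_t^{(2)}}$.

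The final step is bookkeeping about how far those $m$ known digits extend. The known digits of $\gamma$ cover positions $v_t^{(1)} - v_t^{(2)}, \ldots, v_t^{(1)} - v_t^{(2)} + m - 1$. To read off $s(\gamma)$ we need to reach position $0$, i.e.\ $v_t^{(1)} - v_t^{(2)} + m - 1 \geq 0$, which is $m \geq v_t^{(2)} - v_t^{(1)} + 1$; to read off $t(\gamma)$ we need only reach position $-1$, giving $m \geq v_t^{(2)} - v_t^{(1)}$. Since every digit used comes from $t(\alpha)$, replacing $\alpha$ by $t(\alpha)$ in the formula for $\gamma$ does not change the computed values of $s(\gamma)$ and $t(\gamma)$, giving the claimed identities. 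The only delicate point — and the spot where a careless argument would slip by one — is the digit-count asymmetry: the $-1$ shift from $r+1$ to $r$ in the known digits of $\alpha$ propagates into both the pathological threshold ($v_p(x)$ rather than $v_p(x)+1$) and the output inequalities (the extra $+1$ for $s(\gamma)$ vs.\ $t(\gamma)$), so I would verify each inequality explicitly rather than copy them from Theorem~\ref{thmfloor MRs}.
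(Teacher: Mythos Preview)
Your proposal is correct and follows exactly the approach the paper itself uses: the paper's proof of this theorem simply states that it ``proceeds as in Theorem~\ref{thmfloor}, using Proposition~\ref{Pro: digitsum}, Lemma~\ref{Lem: inverse}, and Lemma~\ref{Lem: prodab},'' and you have spelled out precisely that argument with the appropriate shift from $h=r+1$ to $h=r$ known digits. Your attention to how that one-digit shift propagates into the pathological threshold ($v_p(x)$ instead of $v_p(x)+1$) and into the output inequalities is exactly the adjustment needed.
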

\begin{proof}
The proof proceeds as in Theorem~\ref{thmfloor}, using Proposition~\ref{Pro: digitsum}, Lemma~\ref{Lem: inverse}, and Lemma~\ref{Lem: prodab}.\end{proof}

Following Section~\ref{Sec: MobInp}, we examine how using more partial quotients of $\alpha$ contributes to computing the functions $s$ and $t$ of a M\"obius transformation, and we adopt the same notation for the coefficients resulting from the input transformations (see Definition~\ref{Def: recur}).
Lemma~\ref{Lem: rec} also holds for the continued fraction expansion obtained by Algorithm~\eqref{Alg: MR}.

\begin{Lemma}\label{Lem: MRbeforegood}
Let $\{a_n\}_{n\ge0}$ be a sequence of $p$-adic numbers such that \eqref{Eq: mrvaluation} holds for all $n\ge0$, and let $x_0, y_0 \in \Q_p$. Define
\[
\mu_n = \min\{ v_p(x_n a_n), v_p(y_n) \},
\]
where the sequences $\{x_n\}_{n\ge0}$ and $\{y_n\}_{n\ge0}$ are defined as in Definition~\ref{Def: recur}.  
If
\[
\mu_n \le v_p(x_n a_n + y_n) \le \mu_n + r_n, \quad \text{for some } n \equiv 0 \pmod{2},
\]
or
\[
\mu_n \le v_p(x_n a_n + y_n) \le \mu_n + r_n - 1, \quad \text{for some } n \equiv 1 \pmod{2},
\]
then 
\[
v_p(x_m a_m) < v_p(y_m) \quad \text{for all } m \ge n+1.
\]
\end{Lemma}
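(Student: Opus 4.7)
The plan is to adapt the proof of Lemma~\ref{Lem: rec} to the setting of Algorithm~\eqref{Alg: MR}, where the partial quotients satisfy $r_{2n} \ge 0$ and $r_{2n+1} \ge 1$ by Equation~\eqref{Eq: mrvaluation}. The asymmetric hypothesis---the bound $\mu_n + r_n$ for even $n$ versus the stronger $\mu_n + r_n - 1$ for odd $n$---is tailored precisely to compensate for this parity-dependent lower bound on $r_n$. As in Lemma~\ref{Lem: rec}, I would first establish $v_p(x_{n+1} a_{n+1}) < v_p(y_{n+1})$ at the step immediately after $n$, and then propagate the inequality inductively, using the recursions $y_{n+1} = x_n$ and $x_{n+1} = x_n a_n + y_n$ from Definition~\ref{Def: recur}.

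For the base case, I would split into two subcases depending on whether $v_p(x_n a_n + y_n)$ equals $\mu_n$ or strictly exceeds it. In the first subcase, $v_p(x_{n+1}) = \mu_n \le v_p(x_n) - r_n$, hence
\[v_p(x_{n+1} a_{n+1}) \le -r_{n+1} + v_p(x_n) - r_n,\]
which is strictly less than $v_p(y_{n+1}) = v_p(x_n)$ provided $r_n + r_{n+1} \ge 1$. This last inequality always holds, since among any two consecutive indices at least one is odd and thus contributes at least $1$ to the sum.

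In the second subcase, strict cancellation forces $v_p(x_n a_n) = v_p(y_n) = \mu_n$, and hence $v_p(x_n) = \mu_n + r_n$. For $n$ even the hypothesis yields $v_p(x_{n+1}) \le v_p(x_n)$, and the conclusion follows from $r_{n+1} \ge 1$ (next index odd). For $n$ odd, the tighter hypothesis instead gives $v_p(x_{n+1}) \le v_p(x_n) - 1$, so $v_p(x_{n+1} a_{n+1}) \le v_p(x_n) - 1 - r_{n+1} < v_p(x_n)$ regardless of whether $r_{n+1}$ is zero or positive. This is precisely where the $-1$ appearing in the odd-case upper bound is needed, since the next index is even and $r_{n+1} = 0$ is allowed.

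Finally, once $v_p(x_m a_m) < v_p(y_m)$ holds for some $m \ge n+1$, the recursions give $v_p(x_{m+1}) = v_p(x_m) - r_m$ and $v_p(y_{m+1}) = v_p(x_m)$, so
\[v_p(x_{m+1} a_{m+1}) = v_p(x_m) - r_m - r_{m+1} < v_p(x_m) = v_p(y_{m+1}),\]
again by $r_m + r_{m+1} \ge 1$, and the property propagates for all $m \ge n+1$. The main difficulty is purely bookkeeping: matching the asymmetric hypothesis to the parity of the next index so that the strict inequality survives in both the $r_{n+1} \ge 1$ and the $r_{n+1} = 0$ scenarios. No essentially new idea beyond Lemma~\ref{Lem: rec} is required; the refinement is entirely parity-driven.
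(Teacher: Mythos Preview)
Your proposal is correct and follows essentially the same approach as the paper's proof: both split into the case $v_p(x_na_n+y_n)=\mu_n$ versus strict inequality, use the parity-dependent bound $r_n+r_{n+1}\ge 1$ (equivalently $v_p(a_n)+v_p(a_{n+1})<0$) to obtain $v_p(x_{n+1}a_{n+1})<v_p(y_{n+1})$, and then let the inequality propagate. Your write-up is in fact slightly more complete, since you make the inductive step for $m>n+1$ explicit, whereas the paper leaves it implicit.
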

\begin{proof}
If $n \equiv 0 \pmod{2}$ then $a_n = s(\alpha_n)$ and we know $r_{n}+1$ digits of $\alpha_n$. If $v_p(x_na_n+y_n)=\mu_n$, then
\[v_p(a_{n+1}x_{n+1}) = v_p(a_{n+1}) + \mu_n \le v_p(a_{n+1}) + v_p(a_{n}) + v_p(x_{n})<v_p(x_{n})=v_p(y_{n+1}),\]
since $v_p(a_n)+v_p(a_{n+1})<0$ for all $n\ge0$, by \eqref{Eq: mrvaluation}.
If $\mu_n<v_p(x_na_n+y_n)\le\mu_n+r_n$, then
\[v_p(a_{n+1}x_{n+1}) \le \mu_n + r_n - r_{n+1} = v_p(x_n) - r_n + r_n - r_{n+1}<v_p(x_{n})=v_p(y_{n+1}),\]
since $v_p(a_{n+1})<0$, by \eqref{Eq: mrvaluation}.

If $n \equiv 1 \pmod{2}$ then $a_n = t(\alpha_n)$ and we know $r_{n}$ digits of $\alpha_n$. Using the same reasoning, if $v_p(x_na_n+y_n)=\mu_n$, then $v_p(a_{n+1}x_{n+1})<v_p(y_{n+1})$. If $\mu_n<v_p(x_na_n+y_n)\le\mu_n+r_n-1$, then
\[v_p(a_{n+1}x_{n+1}) \le \mu_n +r_n -1 - r_{n+1} = v_p(x_n) - r_n +r_n -1 - r_{n+1}<v_p(x_n)=v_p(y_{n+1}),\]
and the claim follows.
\end{proof}
Using the same reasoning of Proposition~\ref{Prop: othername}, we obtain that, if 
\[\begin{aligned}v_p(x \alpha + y) &\le v_p(y)\\
v_p(z \alpha + t) &\le v_p(z)
\end{aligned}\]
then also for Algorithm~\eqref{Alg: MR} we have
\[v_n^{(2)}-v_n^{(1)} = v_p(z_1)-v_p(x_1) \quad \text{for all }n \ge 1.\]

Hence, once we are in a \emph{good} case, we know that $v_n^{(2)}-v_n^{(1)}$ is constant for all $n\ge1$. Therefore, the problem of computing the $s$ and $t$ functions
\[s(\gamma)=s \left (\frac{x\alpha+y}{z\alpha+t}\right ) \quad \text{and} \quad t(\gamma)=t \left (\frac{x\alpha+y}{z\alpha+t}\right )\]
depends on the existence of an index $n\ge0$ for which $\alpha_n$ has sufficiently small valuation. Indeed, the conditions of Theorem~\ref{thmfloor MRs} and Theorem~\ref{thmfloor MRt} for the computation of $s(\gamma)$ and $t(\gamma)$ become: $s(\gamma)$ is determined if and only if exists $n\ge 1$
\[\begin{cases}- v_p(\alpha_n) \ge v_p(z_1)-v_p(x_1) &\text{if }n \equiv 0 \pmod{2} \\
- v_p(\alpha_n) \ge v_p(z_1)-v_p(x_1) + 1 &\text{if }n \equiv 1 \pmod{2}\end{cases}\]
and $t(\gamma)$ determined if and only if, for $n\ge 1$
\[\begin{cases}- v_p(\alpha_n) \ge v_p(z_1)-v_p(x_1) -1 &\text{if }n \equiv 0 \pmod{2} \\
- v_p(\alpha_n) \ge v_p(z_1)-v_p(x_1) &\text{if }n \equiv 1 \pmod{2}\end{cases}.\]

The final problematic case is when no digit of the numerator or the denominator of $\gamma$ can be determined after any number of input transformations. In this case, we should have
\begin{equation}\label{Eq: condvxMR}
\begin{aligned}&v_p(x_{n+1})\geq v_p(x_{n})+1 \quad &\text{for all }n \equiv 0 \pmod{2},\\
&v_p(x_{n+1})\geq v_p(x_{n}) \quad &\text{for all }n \equiv 1 \pmod{2}.
\end{aligned}
\end{equation}
Using a similar argument to Proposition~\ref{Prop: tolet}, it is possible to prove that in this case we have $x\alpha+y=0$, hence $\alpha=-\frac{y}{x}$. This is clearly not possible, since the continued fraction of rational numbers via Algorithm~\eqref{Alg: MR} is finite, by Proposition~\ref{Prop: finexpansion}. Therefore, after a finite number of input transformations it is possible to compute $\lfloor x \alpha + y \rfloor_p$ when $\alpha = -\frac{y}{x}$. The structure of the algorithm for the computation of the partial quotients of the M\"obius transformation is similar to the one for Ruban and \textit{Browkin I}. By Corollary~\ref{Coro: MRvalunbounded}, for $\mu$-almost all $\alpha \in \Q_p$, the partial quotients at odd positions have unbounded $p$-adic valuation, where $\mu$ is the Haar measure. Therefore, also for Algorithm~\eqref{Alg: MR}, given $x,y,z,t \in \Q$, for almost all $\alpha \in \Q_p$, after a finite number of input transformations we compute the floor function (either via $s$ or $t$).

\section{Computational analysis}\label{Sec: Computations}
In this section, we analyze the performances of Algorithm~\ref{Alg: Gosp1} and Algorithm~\ref{Alg: Gosp2} for computing, respectively, the $p$-adic continued fraction of the M\"obius transformation
\begin{equation*}
\gamma =\frac{x\alpha+y}{z\alpha+t},
\end{equation*}
and the bilinear fractional transformation 
\begin{equation*}
\gamma=\frac{x\alpha\beta+y\alpha+z\beta+t}{e\alpha\beta+f\alpha+g\beta+h},
\end{equation*}
given the $p$-adic continued fractions of $\alpha=[a_0,a_1,\ldots]$ and $\beta=[b_0,b_1,\ldots]$ and $x,y,z,t,e,f,g,h\in\Q$. The purpose of this section is to study the behavior of these algorithms, mainly to understand how many partial quotients from the input continued fractions are usually required in order to get an output partial quotient for the $p$-adic continued fraction of the transformation. For the simulations, we use $p$-adic quadratic irrational numbers that do not appear to have a periodic $p$-adic continued fractions. 
In fact, for algebraic irrationals, the distribution of $p$-adic digits and partial quotients is largely believed to be the same shared by \textit{almost all} $p$-adic numbers, as presented in \cite{adamczewski2007complexity,sun2010borel}. Algorithm~\ref{Alg: Gosp1} and Algorithm~\ref{Alg: Gosp2}, together with the computational experiments contained in this section, have been implemented in SageMath and the code is publicly available\footnote{\href{https://github.com/giulianoromeont/p-adic-continued-fractions}{https://github.com/giulianoromeont/p-adic-continued-fractions}}.

\subsection{M\"obius transformation}\label{Sec: compMob}
The procedure for computing the $p$-adic continued fraction of the M\"obius transformation is described in Algorithm~\ref{Alg: Gosp1}. In essence, we perform input transformations iteratively until we obtain
\begin{equation*}
\frac{x_{\overline{n}}\alpha_{\overline{n}}+y_{\overline{n}}}{z_{\overline{n}}\alpha_{\overline{n}}+t_{\overline{n}}},
\end{equation*}
such that $v_p(x_{\overline{n}}\alpha_{\overline{n}})<v_p(y_{\overline{n}})$ and $v_p(z_{\overline{n}}\alpha_{\overline{n}})<v_p(t_{\overline{n}})$. By Corollary~\ref{Cor: valunique} and Proposition~\ref{Prop: tolet}, we know that this is always the case after finitely many input transformations, as long as both $x\alpha+y$ and $z\alpha+t$ are different from zero. At this point, by Proposition~\ref{Prop: othername}, we can determine the partial quotient of the M\"obius transformation if and only if 
\begin{equation}\label{Eq: satiscond}
v_p(a_n)\leq v_p(\overline{x})-v_p(\overline{z}),
\end{equation}
for some $n\geq\overline{n}$. As already shown in Example~\ref{Exa: neveroutput}, this condition may never be satisfied. However, thanks to the metric results of Section~\ref{Sec: measure}, we know that the set of $p$-adic numbers with such property has Haar measure $0$. 

In Figure~\ref{Fig: rubmob95}, we analyze the number of input partial quotients that are required to compute up to $1000$ output partial quotients. We consider $100$ different M\"obius transformation of the $p$-adic Ruban's continued fraction of $\sqrt{95}$ in $\mathbb{Q}_{13}$. The coefficients of the Möbius transformation are chosen from the set $\{0, \ldots, 10000\}$ so that the transformation is non-singular. In Figure~\ref{Fig: rubmob95}, it is possible to notice several horizontal lines forming a staircase. These lines correspond to the input partial quotients $a_n$, with $n=51,216,455,873,1036,2540$. The reason is that these partial quotients have unusually small valuation:
\begin{align*}
v_p(a_{51})&=v_p\left(\frac{2599}{13^3}\right)=-3,& v_p(a_{216})&=v_p\left(\frac{27212}{13^3}\right)=-3,\\ 
v_p(a_{455})&=v_p\left(\frac{4818}{13^3}\right)=-3,&
v_p(a_{873})&=v_p\left(\frac{4623}{13^3}\right)=-3,\\
v_p(a_{1036})&=v_p\left(\frac{16382}{13^3}\right)=-3,&
v_p(a_{2540})&=v_p\left(\frac{177}{13^4}\right)=-4.
\end{align*}

In fact, for these input partial quotients, the output condition~\eqref{Eq: satiscond} is more likely to be satisfied. Moreover it seems that in many cases, after the output transformation, we still end up in the good case with $v_p(x_{n}\alpha_{n})<v_p(y_{n})$ and $v_p(z_{n}\alpha_{n})<v_p(t_{n})$. In this case, the output condition is again of the form \eqref{Eq: satiscond}, so that the same partial quotient $a_n$ is used to check whether the condition holds. This is the reason why, once we encounter a partial quotient with ``very negative" valuation, it is likely to perform several consecutive outputs.

\begin{center}
\begin{figure}[H]
\includegraphics[scale=0.75]{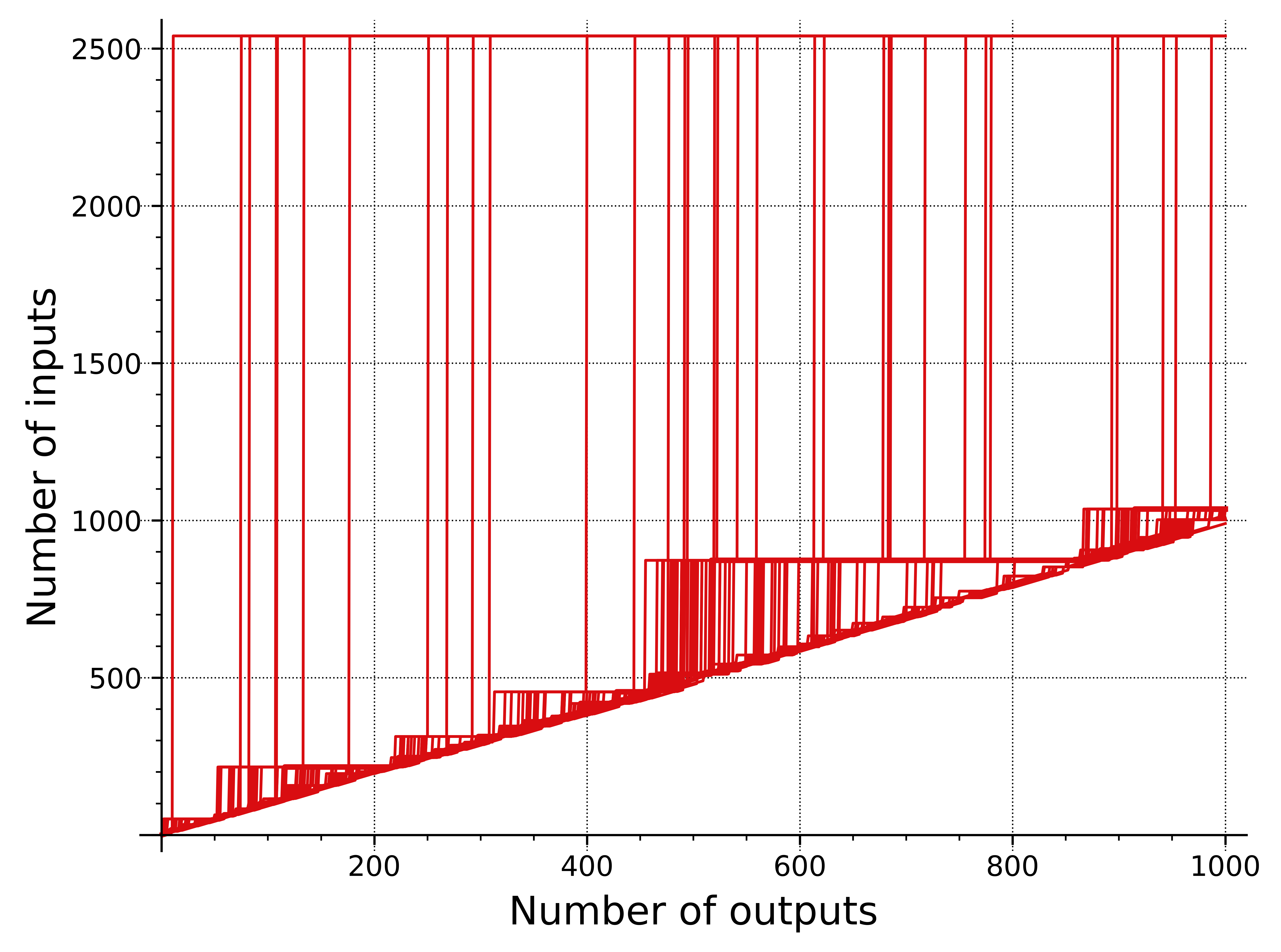}
\caption{Number of inputs required to compute up to $1000$ partial quotients for the $p$-adic Ruban's continued fraction of $100$ different M\"obius transformations of $\sqrt{95}$ in $\mathbb{Q}_{13}$.}
\label{Fig: rubmob95}
\end{figure}
\end{center}

A similar result can be observed in Figure~\ref{Fig: rubmob16653Q2137} for the Ruban's continued fraction of $100$ different M\"obius transformations with coefficients in $\{0,\ldots,10000\}$ for $\sqrt{16653}$ in $\mathbb{Q}_{2137}$. In this case, it is possible to see a clear line corresponding to $a_{1903}$, that is the first partial quotient with valuation $-2$. In fact, for large $p$, having valuation $v_p(a_n)\leq -2$ is rare in general, as it happens around $\frac{1}{p}$ times.

\begin{center}
\begin{figure}[H]
\includegraphics[scale=0.75]{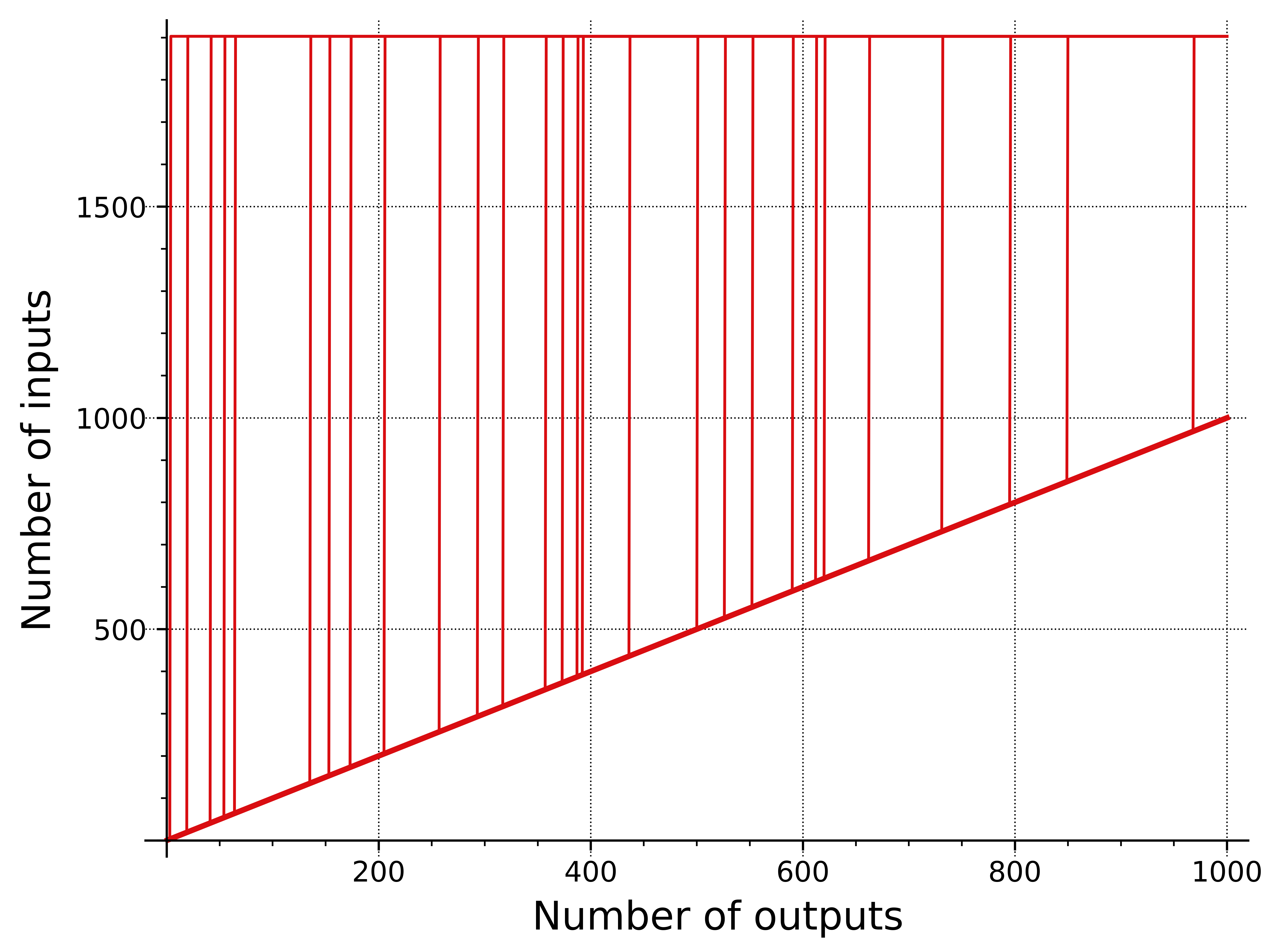}
\caption{Number of inputs required to compute up to $1000$ partial quotients for the $p$-adic Ruban's continued fraction of $100$ different M\"obius transformations of $\sqrt{16653}$ in $\mathbb{Q}_{2137}$.}
\label{Fig: rubmob16653Q2137}
\end{figure}
\end{center}

In Figure~\ref{Figure: BrowkinMobius}, we report the same analysis for Browkin's $p$-adic continued fraction. We plot $100$ different M\"obius transformations of \textit{Browkin I} continued fraction of $\sqrt{95}$ in $\mathbb{Q}_{13}$ and $\sqrt{16653}$ in $\mathbb{Q}_{2137}$.

\begin{figure}[H]
\centering
\begin{subfigure}{0.49\textwidth}
  \includegraphics[width=\linewidth]{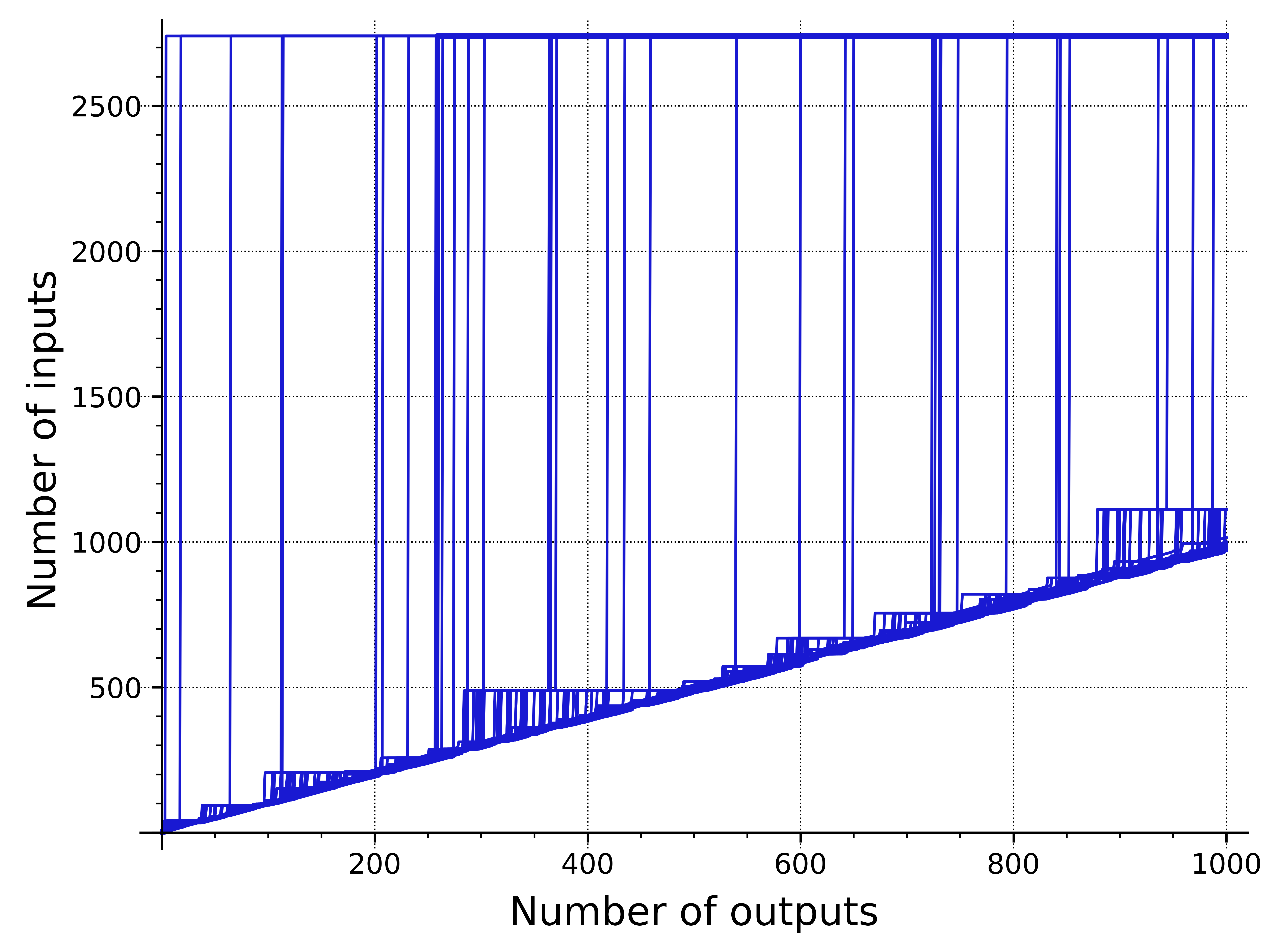}
\end{subfigure}
\hspace{0.001\textwidth}
\begin{subfigure}{0.49\textwidth}
  \includegraphics[width=\linewidth]{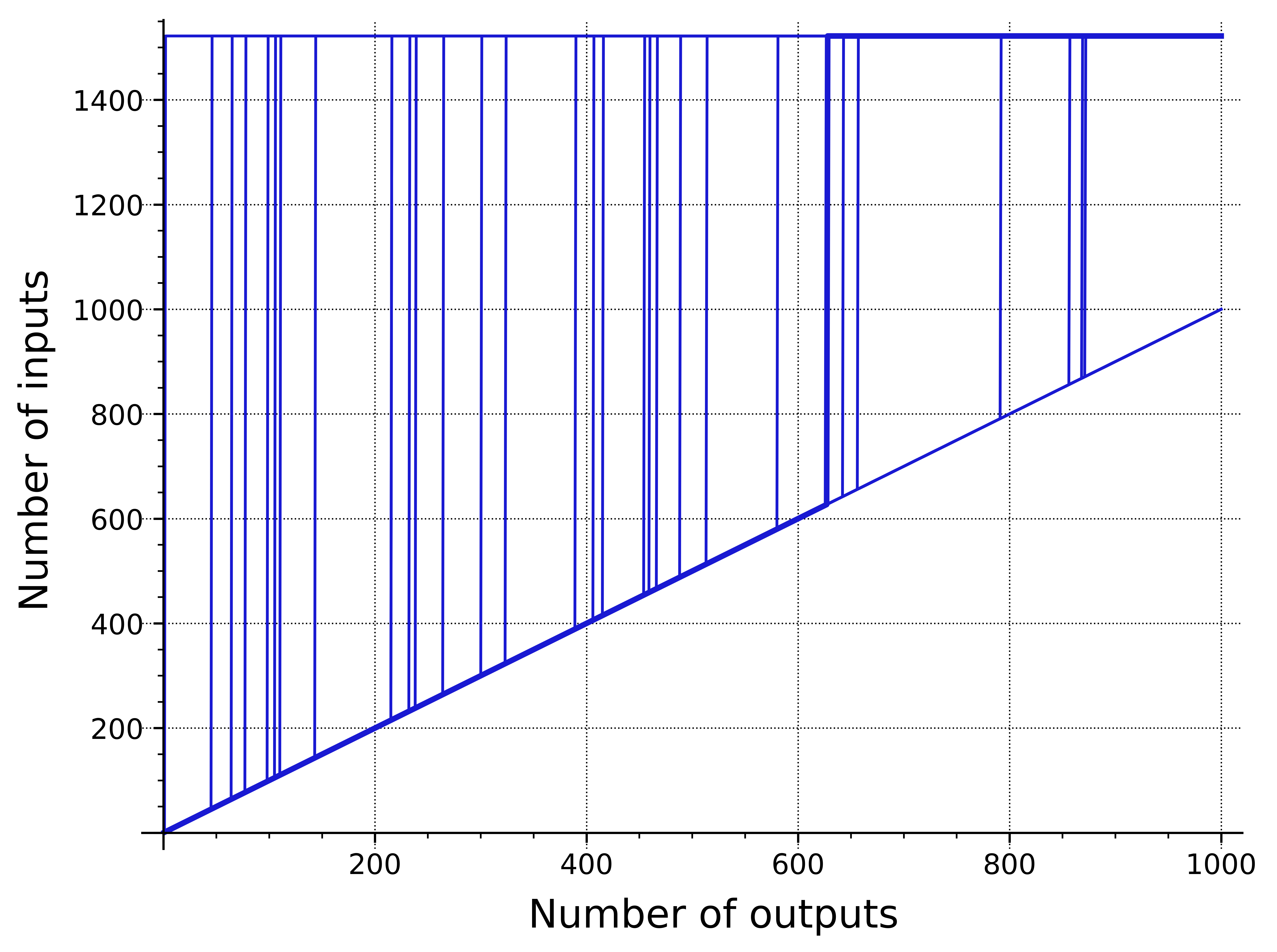}
\end{subfigure}

\caption{Number of inputs required to compute up to $1000$ partial quotients for the $p$-adic Browkin's continued fraction of $100$ different M\"obius transformations of $\sqrt{95}$ in $\mathbb{Q}_{13}$, on the left, and of $\sqrt{16653}$ in $\mathbb{Q}_{2137}$, on the right.}
\label{Figure: BrowkinMobius}
\end{figure}
In Figure~\ref{Figure: BrowkinMobius} there are very clear lines in both plots, corresponding to partial quotients of low $p$-adic valuation. For the continued fraction of $\sqrt{95}$ in $\mathbb{Q}_{13}$, on the left plot, the top line corresponds to
\[v_p(a_{2740})=v_p\left(-\frac{125212}{13^4}\right)=-4.\]
For Browkin's $2137$-adic continued fraction, of $\sqrt{16653}$ the top line corresponds to the first partial quotient of valuation $-2$, that is $a_{1522}$. Moreover, notice that in all these cases, both for Browkin's and Ruban's continued fractions, the graphic tends to follow and to lie above the line of slope $1$.

\subsection{Bilinear fractional transformation}\label{Sec: compbil}
At the end of Section~\ref{Sec: inptrans} and in Section~\ref{Sec: thebilalgorithm}, we have outlined the algorithm to compute the $p$-adic partial quotients for the bilinear fractional transformations of two $p$-adic numbers $\alpha$ ad $\beta$. The first step of the algorithm consists in performing $\beta$-input transformations until we get
\begin{equation*}
\frac{\alpha(x_n\beta_n+y_n)+(z_n\beta_n+t_n)}{\alpha(e_n\beta_n+f_n)+(g_n\beta_n+h_n)},
\end{equation*}
with 
\[v_p(x_n\beta_n)<v_p(y_n), \ \  v_p(z_n\beta_n)<v_p(t_n),  \ \ v_p(e_n\beta_n)<v_p(f_n), \ \ v_p(g_n\beta_n)<v_p(h_n).\]
This step of the algorithm has basically the same complexity as the first step of Algorithm~\ref{Alg: Gosp1}. After that, if
\begin{align*}
    v_p(x_n\alpha+z_n)&=\min\{v_p(x_n\alpha),v_p(z_n)\},\\
    v_p(e_n\alpha+g_n)&=\min\{v_p(e_n\alpha),v_p(g_n)\},
\end{align*}
then we are in the optimal situation, otherwise we swap the role of $\alpha$ and $\beta$ and we run again the algorithm. Whenever we have
\begin{align*}
&v_p(x_{\overline{n}}\beta)<v_p(y_{\overline{n}}),  & 
&v_p(z_{\overline{n}}\beta)<v_p(t_{\overline{n}}),   &
&v_p(x_{\overline{n}}\alpha)<v_p(z_{\overline{n}}),\\
&v_p(e_{\overline{n}}\beta)<v_p(f_{\overline{n}}),   &
&v_p(g_{\overline{n}}\beta)<v_p(h_{\overline{n}}),   &
&v_p(e_{\overline{n}}\alpha)<v_p(g_{\overline{n}}),
\end{align*}
then we are able to compute the output partial quotient if and only if
\[\max \{v_p(a_{n_\alpha}),v_p(b_{n_\beta})\}\leq v_p(x_{\overline{n}})-v_p(e_{\overline{n}}),\]
for some $n_\alpha,n_\beta\geq\overline{n}$.
The latter condition may be never satisfied, but this undesired situation happens only whenever the partial quotients of either $\alpha$ or $\beta$ have bounded $p$-adic valuation. Again, from the results of Section~\ref{Sec: measure}, we know that \textit{almost all} $p$-adic numbers, with respect to Haar measure, have unbounded valuation. Figures~\ref{Fig: RubBil} and~\ref{Fig: BroBil} show the number of $\alpha$-input and $\beta$-input transformations required to obtain up to $10000$ partial quotients of a bilinear fractional transformation of $7$-adic continued fraction of $(\alpha,\beta)=(\sqrt{79},\sqrt{151})$ in $\mathbb{Q}_{7}$, using Ruban’s and Browkin’s algorithms, respectively.
As for the M\"obius transformation, the number of inputs tends to grow linearly with the number of outputs. Horizontal lines in the plots correspond to partial quotients of $\alpha$ or $\beta$ with low $p$-adic valuation.

\begin{center}
\begin{figure}[H]
\includegraphics[scale=0.71]{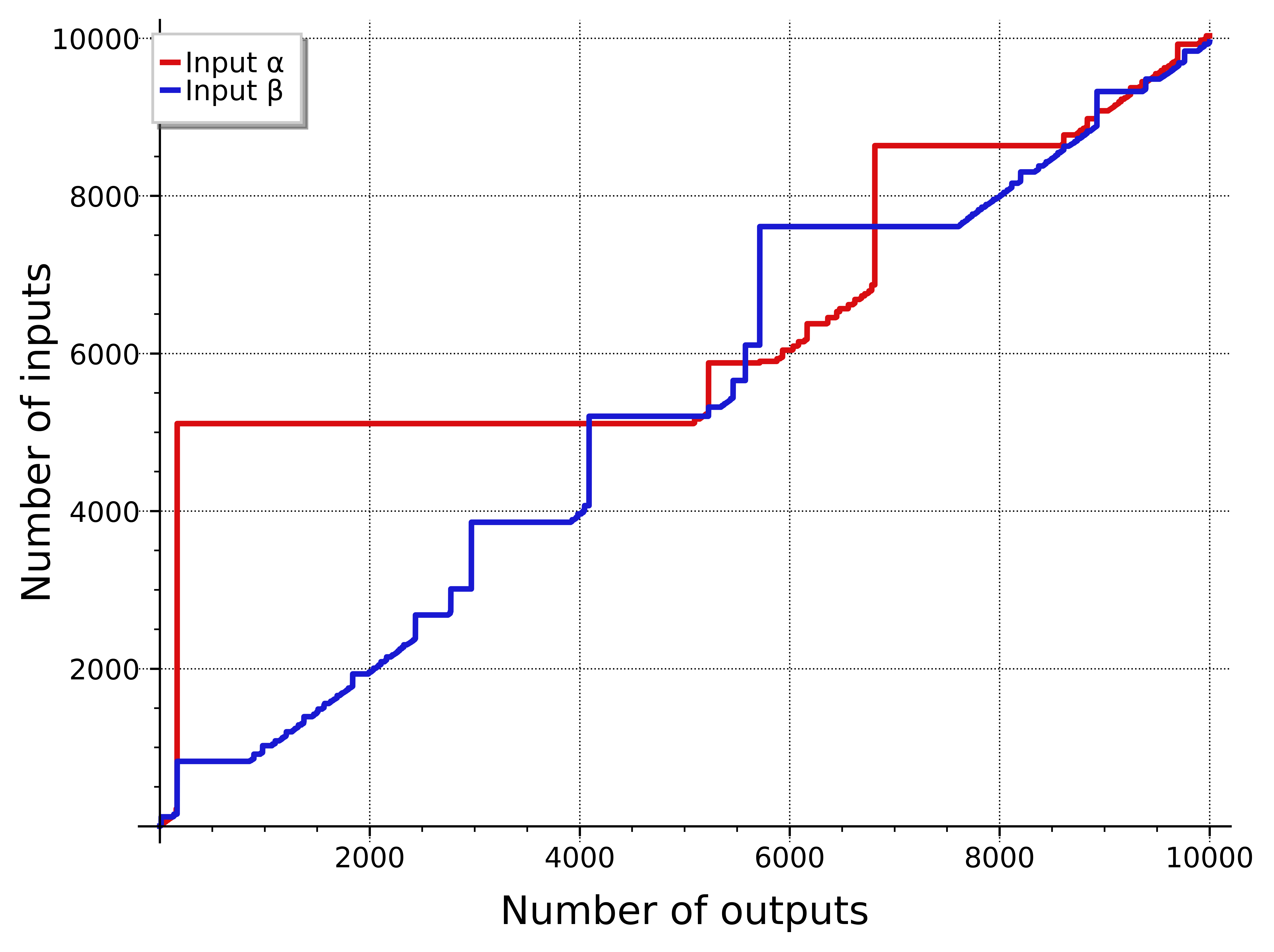}
\caption{Number of $\alpha$-inputs and $\beta$-inputs required to compute up to $10000$ partial quotients for Ruban's continued fraction of a bilinear fractional transformations of $(\alpha,\beta)=(\sqrt{79},\sqrt{151})$ in $\mathbb{Q}_{7}$.}
\label{Fig: RubBil}
\end{figure}
\end{center}

\begin{center}
\begin{figure}[H]
\includegraphics[scale=0.71]{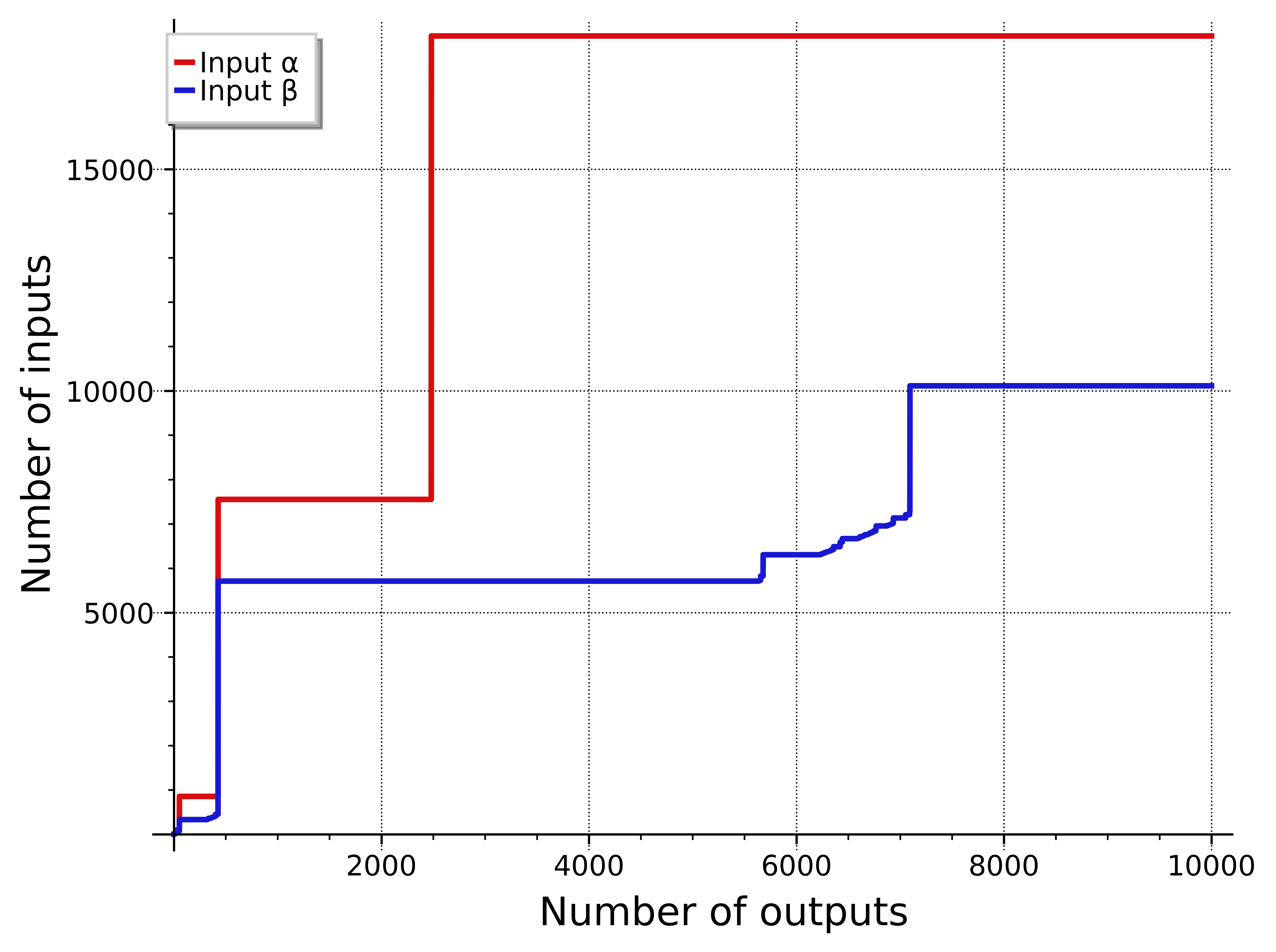}
\caption{Number of $\alpha$-inputs and $\beta$-inputs required to compute up to $10000$ partial quotients for \textit{Browkin I} continued fraction of a bilinear fractional transformation of $(\alpha,\beta)=(\sqrt{79},\sqrt{151})$ in $\mathbb{Q}_{7}$.}
\label{Fig: BroBil}
\end{figure}
\end{center}

\section*{Acknowledgments}

The authors are members of GNSAGA of INdAM.\\
G.R. was supported by the project PRIMUS/25/SCI/008 from Charles University.
G.R. acknowledges that this study was carried out within the MICS (Made in Italy – Circular and Sustainable) Extended Partnership and received funding from the European Union Next-Generation EU (Piano Nazionale di Ripresa e Resilienza (PNRR) – Missione 4 Componente 2, Investimento 1.3 – D.D. 1551.11-10-2022, PE00000004).

\printbibliography

\section*{Appendix}

\IncMargin{1.5em}
\begin{algorithm}[H]\label{Alg: Gosp1}
	\caption{Computation of the $p$-adic continued fraction of the Möbius transformation $\tfrac{x\alpha+y}{z\alpha+t}$, given the rational coefficients, the partial quotients of $\alpha$, and an iteration bound $N$.}\label{gosper1}
	\SetKwData{Left}{left}
	\SetKwData{This}{this}
	\SetKw{And}{and}
	\SetKwFunction{Union}{Union}
	\SetKwFunction{FindCompress}{FindCompress}
	\SetKwInOut{Input}{Input}
	\SetKwInOut{Output}{Output}
    \SetKw{KwTo}{to}
    \SetKwComment{Comment}{$\triangleright$ }{ }
	\Input{$[a_0, a_1, \ldots]=\alpha$, $x,y,z,t \in \mathbb Q$, $N \in \mathbb N$}
	\Output{$[l_0, l_1, \ldots] = \frac{x\alpha+y}{z\alpha+t}$}
	\BlankLine
	$i \gets 0$, $j \gets 0$, $k \gets 0$\\
    
    \ 
\While{$i<N$}{
     \While{$i<N$ \And $(v_p(xa_j+y) > v_p(x) \mathrm{ \mathbf{\  or \ } } v_p(za_j+t) > v_p(z))$}{ 
     $\begin{pmatrix} x & y \cr z & t \end{pmatrix} \gets \begin{pmatrix} x & y \cr z & t \end{pmatrix}\begin{pmatrix} a_j & 1 \cr 1 & 0 \end{pmatrix}$\\
            
    $j \gets j + 1, \ i \gets i + 1$}
    \If{$v_p(xa_j)\geq v_p(y) \mathrm{ \mathbf{\  or \ } } v_p(za_j)\geq v_p(t)$}{
    $\begin{pmatrix} x & y \cr z & t \end{pmatrix} \gets \begin{pmatrix} x & y \cr z & t \end{pmatrix}\begin{pmatrix} a_j & 1 \cr 1 & 0 \end{pmatrix}$

    $j \gets j + 1,\ i \gets i + 1$}
            
    $u\gets v_p(z)-v_p(x)$\\   
   \If{$u<0$}{$l_k\gets 0$
   
   $k\gets k+1$
   }
   \Else{

        \While{$i<N$ \And $-v_p(a_j)< u$}{
        $\begin{pmatrix} x & y \cr z & t \end{pmatrix} \gets \begin{pmatrix} x & y \cr z & t \end{pmatrix}\begin{pmatrix} a_j & 1 \cr 1 & 0 \end{pmatrix}$

            $j \gets j + 1$, $i \gets i + 1$
            }

            $l_k \gets \left\lfloor \frac{xa_j+y}{z a_j+t}\right\rfloor_p$
        
            $\begin{pmatrix} x & y \cr z & t \end{pmatrix} \gets \begin{pmatrix} 0 & 1 \cr 1 & -l_k \end{pmatrix}\begin{pmatrix} x & y \cr z & t \end{pmatrix}$

            $k \gets k + 1$,  $i \gets i+1$}}

\end{algorithm}

\resizebox{0.86\textwidth}{!}{
\begin{algorithm}[H]
\caption{Computation of the $p$-adic continued fraction of the bilinear fractional transformation $\frac{x\alpha\beta+y\alpha+z\beta+t}{e\alpha\beta+f\alpha+g\beta+h}$, given its rational coefficients, the partial quotients of $\alpha$ and $\beta$, and an iteration bound $N$.}\label{Alg: Gosp2}
	\SetKwData{Left}{left}
	\SetKwData{This}{this}
	\SetKw{And}{and}
	\SetKwFunction{Union}{Union}
	\SetKwFunction{FindCompress}{FindCompress}
	\SetKwInOut{Input}{Input}
	\SetKwInOut{Output}{Output}
    \SetKw{KwTo}{to}
    \SetKwComment{Comment}{$\triangleright$ }{ }
	\Input{$[a_0, a_1, \ldots]=\alpha$, $[b_0, b_1, \ldots]=\beta$, $x,y,z,t,e,f,g,h \in \mathbb Q$, $N \in \mathbb N$}
	\Output{$[l_0, l_1, \ldots] = \frac{x\alpha\beta+y\alpha+z\beta+t}{e\alpha\beta+f\alpha+g\beta+h}$}
	\BlankLine
	$i \gets 0$, $j_\alpha \gets 0$, $j_\beta \gets 0$, $k \gets 0$\\

\While{$i<N$}{
     \While{$i<N$ \And $(v_p(xb_{j_\beta}+y) > v_p(x) \mathrm{ \mathbf{\  or \ } } v_p(zb_{j_\beta}+t) > v_p(z) \mathrm{ \mathbf{\  or \ } } v_p(eb_{j_\beta}+f) > v_p(e) \mathrm{ \mathbf{\  or \ } } v_p(gb_{j_\beta}+h) > v_p(g))$}{ \ \\
     $\begin{pmatrix}
    x & y & z & t \\
    e & f & g & h
\end{pmatrix} \gets \begin{pmatrix}
    xb_{j_\beta}+y & x & zb_{j_\beta}+t & z \\
    eb_{j_\beta}+f & e & gb_{j_\beta}+h & g 
\end{pmatrix}$\\
            
    $j_\beta \gets j_\beta + 1, \ i \gets i + 1$}
    $\begin{pmatrix}
    x & y & z & t \\
    e & f & g & h
\end{pmatrix} \gets \begin{pmatrix}
    xb_{j_\beta}+y & x & zb_{j_\beta}+t & z \\
    eb_{j_\beta}+f & e & gb_{j_\beta}+h & g 
\end{pmatrix}$\\
            
    $j_\beta \gets j_\beta + 1, \ i \gets i + 1$\\
            
\BlankLine
  
   \If{$v_p(xa_{j_a}+z)=\min\{v_p(xa_{j_a}),v_p(z)\}$ \And $v_p(ea_{j_a}+g)=\min\{v_p(ea_{j_a}),v_p(g)\}$}{
   \BlankLine
   $\begin{pmatrix}
    x & y & z & t \\
    e & f & g & h
\end{pmatrix} \gets \begin{pmatrix}
    xa_{j_\alpha}+z & ya_{j_\alpha}+t & x & y \\
    ea_{j_\alpha}+g & fa_{j_\alpha}+h & e & f 
\end{pmatrix}$

 $u=v_p(e)-v_p(x)$

   \If{$u<0$}{$l_k\gets 0$
   
   $k\gets k+1$
   }
   \Else{

        \While{ $i<N$ \And $-v_p(a_{j_\alpha})< u$}{
            
            $\begin{pmatrix}
    x & y & z & t \\
    e & f & g & h
\end{pmatrix} \gets \begin{pmatrix}
    xa_{j_\alpha}+z & ya_{j_\alpha}+t & x & y \\
    ea_{j_\alpha}+g & fa_{j_\alpha}+h & e & f 
\end{pmatrix}$
            
    $j_\alpha \gets j_\alpha + 1, \ i \gets i + 1$        
            }
       \While{ $i<N$ \And $-v_p(b_{j_\beta})< u$}{
            
            $\begin{pmatrix}
    x & y & z & t \\
    e & f & g & h
\end{pmatrix} \gets \begin{pmatrix}
    xb_{j_\beta}+y & x & zb_{j_\beta}+t & z \\
    eb_{j_\beta}+f & e & gb_{j_\beta}+h & g 
\end{pmatrix}$
$j_\beta \gets j_\beta + 1, \ i \gets i + 1$
}

$l_k \gets \left\lfloor \frac{xa_{j_\alpha }b_{j_\beta}+ya_{j_\alpha }+z b_{j_\beta}+t}{ea_{j_\alpha }b_{j_\beta}+fa_{j_\alpha }+g b_{j_\beta}+h}\right\rfloor_p$

        $\begin{pmatrix}
    x & y & z & t \\
    e & f & g & h
\end{pmatrix}\gets\begin{pmatrix}
    e & f & g & h \\
    x-l_ke & y-l_kf & z-l_kg & t-l_kh 
\end{pmatrix}  $

    $i \gets i+1$, $k \gets k+1$}}
    \Else{

        \text{swap role} $\alpha\leftrightarrow\beta$,  
    
    }   }
\end{algorithm}}

\end{document}